\g@addto@macro\bfseries{\boldmath}
\def\thesubsection{\thesection\ifnum\c@subsection=0\relax\else.\fi\Alph{subsection}}
\declaretheoremstyle[
headfont=\normalfont\bfseries\textup,
notefont=\normalfont\bfseries, notebraces={(}{)},
bodyfont=\normalfont\itshape,
headformat={\textup{\NUMBER.\ \NAME\NOTE}},
]{JHstyle}
\declaretheorem[style=JHstyle,
numberwithin=subsection,
name=Theorem,
refname={theorem,theorems},
Refname={Theorem,Theorems}%
]{theorem} 
\declaretheorem[style=JHstyle,
sibling=theorem,
name=Definition,
refname={definition,definitions},
Refname={Definition,Definitions}%
]{definition}
\declaretheorem[style=JHstyle,
sibling=theorem,
name=Result,
refname={result,results},
Refname={Result,Results}%
]{result}
\declaretheorem[style=JHstyle,
sibling=theorem,
name=Lemma,
refname={lemma,lemmas},
Refname={Lemma,Lemmas}%
]{lemma}
\declaretheorem[style=JHstyle,
sibling=theorem,
name=Corollary,
refname={corollary,corollaries},
Refname={Corollary,Corollaries}]{corollary}
\declaretheorem[name=Claim,refname={claim,claims},Refname={Claim,Claims}]{claim}
\providecommand{\hlink}[1]{\Autoref{#1}}
\newcommand{\ffrak}[1]{\mathrm{#1}}
\newcommand{\axm}[1]{\textup{\sffamily #1}}
\providecommand{\bb}[1]{\mathbb{#1}}
\providecommand{\bbb}[1]{\mathbb{#1}}
\declaretheorem[style=JHstyle,sibling=theorem,name=Questions,refname={questions,questions},Refname={Questions,Questions}]{questions}
\newcommand{\nothing}[1]{}
\newcommand{\I}{\textup{\textbf{I}}}
\newcommand{\II}{\textup{\textbf{I\hspace{-.7pt}I}}}
\newcommand{\et}{&&}
\newcommand{\playerI}{\text{\I:}&}
\newcommand{\playerII}{\text{\II:}\et}
\newenvironment{game*}[1][]{
	\[{#1}\qquad\begin{array}{r *{20}{c}}
	}{
	\end{array}\]\ignorespacesafterend
}
          \DeclareMathOperator{\Con}{Con} \DeclareMathOperator{\rank}{rank}  \DeclareMathOperator{\cof}{cof}   \DeclareMathOperator{\cp}{cp} \DeclareMathOperator{\lh}{lh} \DeclareMathOperator{\str}{str}  \DeclareMathOperator{\Ult}{Ult}
\title{Weak Indestructibility and Reflection}
\author{James Holland}
\date{2022-04-12}
\newcommand{\bbone}{\textbf{1}}
\DeclareMathOperator*{\bigast}{{\smash{\scalebox{2.5}{\raisebox{-.9ex}{$*$}}}}}
\newcommand{\dblq}[1]{\text{``}#1\text{''}}
\begin{document}
	
	\maketitle

	\begin{abstract}
		We establish an equiconsistency between (1) weak indestructibility for all \(\kappa +2\)-degrees of strength for cardinals \(\kappa \) in the presence of a proper class of strong cardinals, and (2) a proper class of cardinals that are strong reflecting strongs.  We in fact get weak indestructibility for degrees of strength far beyond \(\kappa +2\), well beyond the next inaccessible limit of measurables (of the ground model).  One direction is proven using forcing and the other using core model techniques from inner model theory.  Additionally, connections between weak indestructibility and the reflection properties associated with Woodin cardinals are discussed.  This work is a part of my upcoming thesis \cite{thesis}.
	\end{abstract}

	\tableofcontents

	\newpage
	
	\section{Introduction}
	
	Theorems like compactness for first-order logic tell us there is a great degree of ambiguity for mathematical and set theoretic concepts.  But much of this disappears if we restrict ourselves to only considering models that, in some sense, properly interpret membership (i.e.\ that are transitive).  But this does not get rid of all ambiguity.  The advent of \emph{forcing} as a method of set theory has revealed that even with this restriction, there's a great amount of ambiguity not about the membership relation, but instead about what sets exist.
	
	One response to this is to come up with axioms that help sharpen our conception of the universe so as to become immune to methods of forcing in some sense, and say more definitively what exists.  For example, the axiom of \dblq{\(\mathrm{V}=\mathrm{L}\)} yields models that are immune to forcing.  Under certain assumptions, the core model \(\ffrak{K}\) has this property too.  Forcing axioms can be considered in a similar way, essentially stating that we've already forced as much as we can.
	
	The consistency of most of these axioms is unfortunately not something we can know, as they often carry with them \emph{large cardinal} hypotheses.  Such hypotheses are used as a standard measure of the strength of statements: if we want to know how strong a statement is, we show it is equiconsistent with some large cardinal axiom.  The question then becomes to what extent can these large cardinals be immune to forcing?
	
	Generally speaking, we cannot ensure large cardinal properties are immune to forcing—\emph{indestructible}—because we may simply add via forcing a bijection between the cardinal \(\kappa \) and \(\aleph _0\).  But if we restrict our forcing to be, say, smaller than \(\kappa \), we can get preservation of some properties, especially those involving elementary embeddings since measures in the ground model generate measures in the generic extension.  Large forcings can still pose a problem, but the answer isn't as clear if we restrict our attention to large posets that don't affect anything \dblq{small}.  These are the notions we will investigate here, and mostly we will consider \(<\kappa \)-strategi\-cally closed, \(\le\kappa \)-distributive posets.
	
	This topic starts with Laver's preparation for making a supercompact cardinal \(\kappa \) indestructible (by \(<\kappa \)-directed closed forcings) in \cite{laverprep}.  Since then, there has been a great deal of literature about the limits of this for other cardinals and varying degrees of strength.\footnote{By a \emph{degree of strength} of a caridnal \(\kappa \), I mean an ordinal \(\rho \) such that \(\kappa \) is \(\rho \)-strong, defined with \hlink{strongdef}.}  Ignoring for the moment what exactly a lot of this terminology means, a short selection of results in this area is the following.
	\begin{itemize}
		\item \cite{apterhamkins} explores making all degrees of supercompactness and strength indestructible while there is a supercompact.
		\item \cite{hamkinssuper} explores the ways in which indestructible cardinals can be made destructible and subsequently resurrected.
		\item \cite{aptersargsyan} explores a weaker version of indestructibility for strength while there is a strong cardinal, and gets an equiconsistency result: universal weak indestructibility (while there is a strong cardinal) is equiconsistent with a \emph{hyperstrong} cardinal.
		\item \cite{apterapplications} further explores this weaker version of indestructibility for strength to get (weak) indestructibility for lots of (strong) supercompact cardinals, and again establishes an equiconsistency for this indestructibility for many strongs from a hyperstrong cardinal (a proper class if we cut off the universe at the hyperstrong).
	\end{itemize}
	This work further explores \cite{aptersargsyan} and \cite{apterapplications}.  Generally speaking, these works focus on exploring indestructibility for large degrees of strength and supercompactness in the presence of many strong or supercompact cardinals.  Ostensibly, this is a harder task than showing weak indestructibility for smaller degrees of strength, simply because a strong cardinal's degrees of strength are arbitrarily large.  But instead, this work shows that to get universal weak indestructibility for small degrees of strength and supercompactness, we actually need a large \emph{increase} in consistency strength, much more than a proper class of hyperstrong or hypercompact cardinals, just one of which is used in \cite{aptersargsyan} and \cite{apterapplications} to get (many) weakly indestructible strongs and supercompacts.
	
	There is a balance to the amount of indestructibility one can have, and generally speaking, indestructibility for large degrees of strength will conflict with indestructibility for small degrees of strength. The largest sense of \dblq{small} degrees is degrees of strength below the next cardinal \(\kappa \) that is \(\kappa +2\)-strong, and the smallest sense of \dblq{large} degrees is degrees slightly above such a cardinal.  So the universal indestructibility results of \cite{aptersargsyan} and \cite{apterhamkins} critically proceed by making sure there is no measurable above the resulting strong or supercompact cardinal, thus being incompatible with the existence of multiple strong or supercompact cardinals. The work in \cite{apterapplications} considers multiple strong and supercompact cardinals with indestructibility, but as a consequence, ignores indestructibility for smaller degrees of strength.
	
	The following theorem was the main goal of \cite{aptersargsyan}.
	\begin{theorem}[Apter\textendash Sargsyan]\label{aptsargmain}
		The following are equiconsistent with \(\axm{ZFC}\):
		\begin{enumerate}
			\item There is a hyperstrong cardinal.
			\item There is a strong cardinal and universal weak indestructibility for all degrees of strength holds.
		\end{enumerate}
	\end{theorem}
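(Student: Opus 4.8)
The plan is to establish the two consistency directions by entirely different methods: $(1)\Rightarrow(2)$ by forcing and $(2)\Rightarrow(1)$ by core model theory. I read \dblq{$\kappa$ is hyperstrong} as the reflection property that $\kappa$ is strong and for every $\lambda$ there is a $\lambda$-strong embedding $j\colon\mathrm{V}\to\mathrm{M}$ (so $\mathrm{V}_\lambda\subseteq\mathrm{M}$ and $j(\kappa)>\lambda$) with $\mathrm{M}\models$ \dblq{$\kappa$ is strong}. For the forcing direction I would start with a hyperstrong $\kappa$ and run a Hamkins-style lottery preparation $\mathbb{P}$ of length $\kappa$: an Easton-support iteration in which, at each appropriate stage $\gamma$, one forces with the lottery sum of all $<\gamma$-strategically closed, $\le\gamma$-distributive posets of bounded rank, so that every poset later usable to attack a degree of strength below $\kappa$ is already \dblq{guessed} at the relevant stage. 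One then checks that $\kappa$ stays strong in $\mathrm{V}^{\mathbb{P}}$ by lifting a witnessing embedding $j\colon\mathrm{V}\to\mathrm{M}$; the reflection $\mathrm{M}\models$ \dblq{$\kappa$ strong} is exactly what lets $\mathrm{M}$ compute enough of the iteration and what supports a master condition and a tail generic, so that $j$ lifts to $\mathrm{V}[G]\to\mathrm{M}[j(G)]$. Universal weak indestructibility then follows by absorption: any $<\lambda$-strategically closed, $\le\lambda$-distributive $\mathbb{Q}$ used afterwards to attack $\lambda$-strongness already appears as a lottery branch, so the combined forcing factors through $\mathbb{P}$ and $\lambda$-strongness survives. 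Finally one arranges—by truncating the universe at an appropriate closure point or by auxiliary forcing—that no measurable cardinal survives above the resulting strong cardinal, which is precisely what makes the weak indestructibility universal.

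For the core model direction, assume $\mathrm{V}\models$ \dblq{there is a strong cardinal and universal weak indestructibility holds}, and suppose toward the consistency conclusion that no inner model has a hyperstrong cardinal. Under this anti-large-cardinal hypothesis the core model $\ffrak{K}$ exists and enjoys weak covering together with the usual rigidity and condensation. Fixing a strong cardinal $\kappa$ of $\mathrm{V}$, covering-type arguments show that $\kappa$ retains substantial strength in $\ffrak{K}$. The crux is to show that universal weak indestructibility forces $\kappa$ to be hyperstrong in $\ffrak{K}$: were it not, the failure would be witnessed by a gap in the extender sequence of $\ffrak{K}$—a degree $\rho$ above which no strength embedding reflects \dblq{$\kappa$ strong}—and from such a gap one builds a $<\kappa$-strategically closed, $\le\kappa$-distributive poset, coding information read off the $\ffrak{K}$-sequence at the level of the gap, that destroys $\rho$-strongness of $\kappa$, contradicting weak indestructibility. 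Hence $\ffrak{K}\models$ \dblq{$\kappa$ is hyperstrong}, which gives the consistency of \axm{ZFC} with a hyperstrong cardinal.

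The main obstacle is the lower-bound direction: extracting from the bare failure of hyperstrongness in $\ffrak{K}$ a concrete, provably $\le\kappa$-distributive and $<\kappa$-strategically closed forcing that kills a specified degree of strength. This is where the delicate inner model theory—pinning hyperstrongness down on the extender sequence and running the covering analysis that computes $\kappa$'s strength in $\ffrak{K}$—has to be made to mesh with a bespoke forcing construction, so that the poset is distributive enough to fall under weak indestructibility yet still defeats $\rho$-strongness. On the forcing side the comparatively routine but still technical hurdle is the master condition and tail-generic construction for lifting $j$ while simultaneously preserving every degree of strength below $\kappa$.
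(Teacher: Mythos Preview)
The paper does not itself prove \hlink{aptsargmain}; it is quoted from \cite{aptersargsyan}. But the paper sketches the method in several places (notably \hlink{srshypergenforcing} and the discussion around \hlink{theprepdef} and \hlink{maincore}), and your proposal diverges from that method in both directions in ways that matter.

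For $(2)\Rightarrow(1)$, your plan is inverted. You propose that a failure of hyperstrength in $\ffrak{K}$ produces a gap from which one \emph{builds} a $<\kappa$-strategically closed, $\le\kappa$-distributive poset destroying a degree of strength in $\ffrak{V}$, contradicting indestructibility. That is not how the argument runs, and I do not see how to carry it out: there is no general machine turning a missing extender on the $\ffrak{K}$-sequence into a destructive poset in $\ffrak{V}$. The actual technique uses indestructibility \emph{positively}. One forces with $\ffrak{Col}(\kappa^+,\lambda)$ for arbitrarily large $\lambda$; by indestructibility $\kappa$ remains $\kappa+2$-strong in the extension, and by generic absoluteness $\ffrak{K}^{\ffrak{V}}=\ffrak{K}^{\ffrak{V}[G]}$. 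An argument like \hlink{calcKseq} then puts a long extender on the $\ffrak{K}$-sequence, so $\kappa$ is $\lambda$-strong in $\ffrak{K}$ for all $\lambda$; iterating this reflection through the hierarchy gives hyperstrength of $\kappa$ in $\ffrak{K}$. Your contradiction scheme never gets off the ground because indestructibility is a hypothesis to be exploited, not to be contradicted.

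For $(1)\Rightarrow(2)$, your reading of ``hyperstrong'' is only $1$-hyperstrong (see \hlink{hyperstrongdef}); the full transfinite hierarchy is what makes the preparation terminate. More substantively, you describe a Laver-style lottery preparation below $\kappa$ with \emph{all} appropriate posets, then lift $j$ to keep $\kappa$ strong, and claim universal weak indestructibility by absorption. But absorption of this kind gives indestructibility for $\kappa$'s own degrees of strength, not for every cardinal below $\kappa$. The Apter--Sargsyan preparation is a trial by fire: at each stage $\delta$ one forces (via lottery) with a $\delta,\rho$-\emph{appropriate} poset, i.e.\ one that actually \emph{destroys} a destructible degree of $\delta$'s strength. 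One continues until reaching a $\kappa$ whose $<\kappa^{+\sharp}$-strength is already weakly indestructible in $\ffrak{V}^{\bbb{P}_\kappa}$, and then \emph{truncates} the universe at $\kappa^{+\sharp}$ (compare the proof of \hlink{srshypergenforcing}). The hyperstrong hierarchy is exactly what guarantees this search halts at or before the hyperstrong. Your lift-up of $j$ plays no role; the resulting strong cardinal is the point of truncation, not the original hyperstrong with its embeddings lifted.
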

	Hence a hyperstrong cardinal is able to yield indestructibility for all small degrees of strength, and produce a strong cardinal (with also indestructible strength).  Considering the balance of indestructible degrees of strength, a natural strengthening of (2) is having a proper class of strong cardinals while still ensuring universal weak indestructibility for small degrees of strength.  A natural guess at the consistency strength of this is the existence of many hyperstrong cardinals.  But this is insufficient, and indeed the consistency strength of this is strictly larger than a proper class of hyperstrong cardinals.  The following is one of the main results of the document that ensures this increase in strength, assuming for the moment that a single strong reflecting strongs cardinal (with a strong above it) gives the consistency of a proper class of hyperstrongs.
	
	\begin{theorem}[Main Goal]\label{maingoal}
		The following are equiconsistent with \(\axm{ZFC}\):
		\begin{enumerate}
			\item There is a proper class of strongs reflecting strongs.
			\item There is a proper class of strong cardinals and weak indestructibility for any cardinal \(\kappa \)'s \(\kappa +2\)-strength.
			\item There is a proper class of strong cardinals and weak indestructibility for any cardinal \(\kappa \)'s \(\lambda \)-strength where \(\lambda \) is below the next measurable limit of measurables above \(\kappa \).
		\end{enumerate}
	\end{theorem}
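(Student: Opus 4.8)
The plan is to prove a cycle of relative consistency implications, $\Con(1)\Rightarrow\Con(3)\Rightarrow\Con(2)\Rightarrow\Con(1)$, so that the three statements share one consistency strength. The middle link is essentially free: a proper class of strong cardinals already provides measurable limits of measurables above every cardinal $\kappa$, and the least such limit is an inaccessible above $\kappa$, hence lies well above $\kappa+2$; so every instance of $\kappa+2$-strength is an instance of $\lambda$-strength for $\lambda$ below the next measurable limit of measurables, and any model of (3) is at once a model of (2). This reduces the theorem to the two substantive arcs named in the abstract: a forcing construction yielding (3) from (1), and a core model computation recovering $\Con(1)$ from (2).

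For the arc $(1)\Rightarrow(3)$ I would start in a model with a proper class of strongs reflecting strongs and run a reverse Easton iteration whose stage-$\gamma$ forcing, for $\gamma$ in a suitable class of cardinals, is the lottery sum of all $<\gamma$-strategically closed, $\le\gamma$-distributive posets in $V_\gamma$---a lottery preparation in the style of Hamkins, arranged so that each relevant degree of strength is separately targeted. A strong reflecting strongs $\kappa$ plays two roles. Its reflection of the class of strong cardinals below it keeps the preparation self-similar under the $\lambda$-strength embeddings $j:V\to M$ with critical point $\kappa$, so that the stage-$\kappa$ lottery computed in $M$ can be steered onto whichever admissible poset $\mathbb{Q}$ we later want $\kappa$ to absorb; and it guarantees that a proper class of strong cardinals survives the whole iteration. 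To check that $\kappa$'s $\lambda$-strength is weakly indestructible for every $\lambda$ below the next measurable limit of measurables, I would, given such a $\mathbb{Q}$ in the extension, lift a $\lambda$-strength embedding first through the iteration and then through $\mathbb{Q}$: strategic closure supplies a generic or master condition below $\kappa$, $\le\kappa$-distributivity ensures no new subset of $\kappa$ appears to spoil the extender, and the anticipatory lottery choice at $\kappa$ matches the tail of $j$ applied to the iteration. The real work is to engineer a single preparation that lifts embeddings uniformly across the entire band of degrees up to the next measurable limit of measurables, rather than for one fixed $\lambda$.

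For the arc $(2)\Rightarrow\Con(1)$ I would assume (2) and pass to the core model $\ffrak{K}$, available with its covering and condensation properties under the anti-large-cardinal hypothesis proper to this region (no inner model with the overlapping extenders that would outrun the construction). The target is that $\ffrak{K}$ carries a proper class of strongs reflecting strongs, which yields $\Con(1)$. I would argue locally and contrapositively: each strong cardinal of $V$ remains strong in $\ffrak{K}$, and if some such $\kappa$ failed to be a strong reflecting strongs in $\ffrak{K}$, the $\ffrak{K}$-strongs below $\kappa$ would be bounded by some $\delta<\kappa$, leaving the interval $(\delta,\kappa)$ empty of the cardinals that force stationary reflection at $\kappa^+$. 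That gap is the opening needed to build, over $V$, a $<\kappa$-strategically closed, $\le\kappa$-distributive poset---adding a non-reflecting stationary subset of $\kappa^+$ concentrated on the gap, or a square-like sequence guided by $\delta$---which fixes $V_{\kappa+1}$ yet alters $V_{\kappa+2}$ so as to destroy $\kappa$'s $\kappa+2$-strength, contradicting the weak indestructibility granted by (2). Since (2) provides a proper class of strongs, this forces strongs reflecting strongs to be unbounded in $\ffrak{K}$.

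I expect this last arc to be the main obstacle. The delicate point is the precise extraction of the destructive poset from the failure of reflection: one must verify that the bound $\delta$ genuinely obstructs \emph{every} $\kappa+2$-strong extender after the forcing, that the poset is harmless below $\kappa$ and stays $\le\kappa$-distributive (so that it is a legitimate test for weak indestructibility), and that it does its damage exactly when reflection fails and not otherwise. Running the covering and condensation bookkeeping in $\ffrak{K}$ to certify that the $V$-strongs are $\ffrak{K}$-strongs with the intended reflection behavior---and that no overlapping-extender inner model silently appears to invalidate $\ffrak{K}$---is where the inner model theory genuinely bites.
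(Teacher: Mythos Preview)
Your cycle $(3)\Rightarrow(2)$ is fine, and the forcing arc $(1)\Rightarrow(3)$ is in the right spirit, though the paper's preparation is a \emph{trial by fire} rather than an anticipatory Laver-style lottery: at stage $\delta$ one forces with the lottery of posets that \emph{destroy} a minimal degree of $\delta$'s strength (together with a cleanup collapse), so that anything surviving with $\kappa+2$-strength must have it weakly indestructible; the srs hypothesis is what allows one to prove $\bbb{P}_\lambda^{\ffrak{V}}=\bbb{P}_\lambda^{\ffrak{M}}$ for $\lambda$ a limit of strongs and hence to lift $\lambda$-srs embeddings.

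The core model arc $(2)\Rightarrow\Con(1)$, however, has a genuine gap. First, you have misread the definition: $\kappa$ being srs does \emph{not} say that strongs are unbounded below $\kappa$; it says that for every $\lambda$ there is a $\lambda$-strong $(\kappa,\lambda)$-extender $E$ whose ultrapower agrees with $\ffrak{V}$ on which ordinals below $\lambda$ are strong. Failure of srs in $\ffrak{K}$ therefore does not hand you a bound $\delta<\kappa$ on $\ffrak{K}$-strongs, and the proposed non-reflecting stationary set or square-type forcing has no evident connection to the actual obstruction.

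More importantly, the paper's argument runs in the opposite direction from your contrapositive. One does not manufacture a destructive poset from a failure of reflection; one uses the \emph{given} weak indestructibility constructively. The key step is: if $\kappa$ is $\kappa+2$-strong in $\ffrak{V}$ and this is weakly indestructible, then for arbitrarily large $\lambda$ the collapse $\Col(\kappa^+,\lambda)$ is $<\kappa$-closed and $\le\kappa$-distributive, so $\kappa$ stays $\kappa+2$-strong in $\ffrak{V}[G]$; but now $\lambda<(\kappa^{++})^{\ffrak{V}[G]}$, and by generic absoluteness of $\ffrak{K}$ together with Schindler's result that $j\upharpoonright\ffrak{K}$ is an iteration map, the $\kappa+2$-strong extender restricts to an extender on the $\ffrak{K}$-sequence of strength approaching $\lambda$. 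Since $\lambda$ was arbitrary, every $\kappa+2$-strong cardinal of $\ffrak{V}$ is fully strong in $\ffrak{K}$. One then shows directly that for a $\ffrak{V}$-strong $\kappa$ the restricted extender $F=E\cap\ffrak{K}$ is $\lambda$-srs in $\ffrak{K}$, using that $\lambda$ is a limit of $\ffrak{K}$-strongs (by the previous step) and $\Sigma_2$-reflection to see that $\ffrak{K}$, $\ffrak{K}^{\Ult_E(\ffrak{V})}$, and $\Ult_F(\ffrak{K})$ all agree on strongs below $\lambda$. No destructive poset is ever built.
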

	
	In my thesis~\cite{thesis}, we also establish a similar result for supercompactness, and the following just by continuing the preparation from \hlink{maingoal} through to a Woodin cardinal, and a separate preparation where we ignore small degrees of strength.
	
	\begin{corollary}[Side Result]\label{woodingoal}
		The following are equiconsistent with \(\axm{ZFC}\):
		\begin{enumerate}
			\item There is a Woodin cardinal.
			\item There is a Woodin cardinal \(\delta \) such that weak indestructibility for any \(\kappa \)'s \(\kappa +2\)-strength holds below \(\delta \).
			\item There is a Woodin cardinal \(\delta \) such that every \(<\delta \)-strong cardinal has weakly indestructible \(<\delta \)-strength.
		\end{enumerate}
	\end{corollary}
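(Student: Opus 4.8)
The plan is to reduce everything to \hlink{maingoal} together with the standard fact that a Woodin cardinal is the local, reflected form of a proper class of strongs reflecting strongs. Observe first that both (2) and (3) explicitly assert the existence of a Woodin cardinal, so $\mathrm{Con}(2)\Rightarrow\mathrm{Con}(1)$ and $\mathrm{Con}(3)\Rightarrow\mathrm{Con}(1)$ are trivial (simply forget the indestructibility clause). The content is therefore $\mathrm{Con}(1)\Rightarrow\mathrm{Con}(2)$ and $\mathrm{Con}(1)\Rightarrow\mathrm{Con}(3)$, both of which start from a Woodin cardinal $\delta$ and force with a length-$\delta$ preparation that preserves the Woodinness of $\delta$.

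The bridge to \hlink{maingoal} is that if $\delta$ is Woodin then $V_\delta\models$ ``there is a proper class of strongs reflecting strongs.'' This follows from the $A$-strength characterization of Woodinness applied to the predicate $A$ coding the $<\delta$-strong cardinals below $\delta$: Woodinness yields cofinally many $\kappa<\delta$ that are $<\delta$-strong and whose $<\delta$-strength embeddings $j\colon V\to M$ (with $\mathrm{crit}(j)=\kappa$) reflect $A$, and reflecting this predicate is exactly the assertion, computed inside $V_\delta$, that $\kappa$ is strong and reflects the strongs.

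For $\mathrm{Con}(1)\Rightarrow\mathrm{Con}(2)$ I would run the preparation of \hlink{maingoal} not as a proper-class iteration but as an Easton-support iteration $\mathbb{P}_\delta$ of length $\delta$ carried out inside $V_\delta$ --- continuing the \hlink{maingoal} preparation up to $\delta$. By the bridge the hypothesis of \hlink{maingoal} holds in $V_\delta$, so for a generic $G$ we obtain $V_\delta[G]\models$ ``proper class of strongs $+$ weak indestructibility for each $\kappa$'s $\kappa+2$-strength''; since for every $\kappa<\delta$ the next measurable limit of measurables above $\kappa$ already lies below $\delta$, this local preparation in fact delivers the full $\lambda$-form of \hlink{maingoal} below $\delta$, a fortiori the $\kappa+2$ indestructibility demanded by (2). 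For $\mathrm{Con}(1)\Rightarrow\mathrm{Con}(3)$ I would instead use a separate Apter\textendash Sargsyan-style single Laver-guided iteration that makes the full $<\delta$-strength of each $<\delta$-strong cardinal indestructible; because this preparation deliberately ignores the small degrees of strength it is simpler, and the usual conflict between large- and small-degree indestructibility is dissolved here by the cap at $\delta$, since (3) asks only about $<\delta$-strength.

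In both cases the crux --- and the step I expect to be the main obstacle --- is showing that $\delta$ remains Woodin after forcing. Preserving a single strong cardinal is routine by lifting one strength embedding, but Woodinness demands that for \emph{every} $A\in V_\delta[G]$ there still be a $<\delta$-strong-for-$A$ cardinal below $\delta$. The argument is a reflection-and-lifting one: any such $A$ is added by some bounded stage $\mathbb{P}_\eta$ with $\eta<\delta$, and the Woodinness of $\delta$ in $V$ reflects the relevant structure below $\delta$ to produce a $\kappa$ whose strength embedding lifts --- via a master condition and, where necessary, surgery on the generic --- to an embedding witnessing $<\delta$-$A$-strength in $V[G]$. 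Checking that the Easton supports, the closure of the iteration below each target, and the definability of the preparation all cooperate with this lifting is where the technical weight lies; once it is in place, the Woodin cardinal passes through either preparation intact and both (2) and (3) follow.
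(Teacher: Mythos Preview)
Your proposal is correct and matches the paper's approach essentially point for point. The paper likewise observes (via \hlink{woodinpcsrs}) that a Woodin \(\delta\) makes \(\ffrak{V}\mid\delta\) a model with a proper class of srs cardinals, then obtains (2) by \dblq{continuing the preparation from \hlink{maingoal} through to a Woodin cardinal} and (3) by \dblq{a separate preparation where we ignore small degrees of strength}; the preservation of Woodinness under \(\bbb{P}_\delta\) is isolated as \hlink{goal2} and, just as you anticipate, is the technical crux (the paper defers its proof to \cite{thesis}).
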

	
	This gives the consistency of a large number of weakly indestructible strong cardinals from a Woodin cardinal.  Moreover, it tells us that weak indestructibility for large degrees of strength and for small degrees of strength below a Woodin cardinal are equiconsistent.  So while weak indestructibility for small degrees (with, say, two strong cardinals) is strictly stronger than for a proper class of weakly indestructible strong cardinals, this difference levels out as we approach a Woodin.
	
	\subsection{Background}
	
	Throughout we will use the following notation.
	
	\begin{definition}\label{notationconventions}\hfill
		\begin{itemize}
			\item For \(\delta \in \mathrm{Ord}\), write \(\delta ^{+\sharp}\) for the least measurable cardinal \(\kappa >\delta \).
			\item Write \(\delta ^{+\sharp\sharp}\) for the least cardinal \(\kappa >\delta \) that is \(\kappa +2\)-strong.
			\item Write \(\delta ^{+\P}\) for the least strong cardinal \(\kappa >\delta \).
			\item For a transitive model of set theory, \(\ffrak{M}\), and \(\lambda \in \mathrm{Ord}^{\ffrak{M}}\), we write \(\ffrak{M}\mid\lambda \) for \(\ffrak{V}_\lambda ^{\ffrak{M}}\), the \(\lambda \)-th stage of the cumulative hierarchy in \(\ffrak{M}\).
			\item For a poset \(\bbb{R}\in \mathrm{V}\), we write \(\ffrak{V}^{\bbb{R}}\) for an arbitrary generic extension of \(\ffrak{V}\) by \(\bbb{R}\), occasionally working below some particular condition in \(\bbb{R}\).
		\end{itemize}
	\end{definition}
	
	\begin{definition}\label{fourindestructdef}
		Let \(\kappa \) be a cardinal, \(\varphi \) a property of cardinals, and \(\psi \) a property of posets.
		\begin{itemize}
			\item \(\kappa \) has \(\varphi \) as \emph{indestructible} by posets with \(\psi \) iff
			\[\forall \bbb{P}\text{ a poset}\ (\psi (\bbb{P})\rightarrow \bbb{P}\Vdash \dblq{\varphi (\check\kappa )})\text{.}\]
			\item \(\kappa \) has \(\varphi \) as \emph{weakly indestructibile} by posets with \(\psi \) iff it's indestructible by \(\le\kappa \)-distributive posets with \(\psi \).
		\end{itemize}
	\end{definition}
	So the usual Laver indestructibility refers to indestructibility for all degrees of supercompactness of a single (supercompact) cardinal \(\kappa \) by \(<\kappa \)-directed closed posets.  For strength, we usually consider weak indestructibility by \(<\kappa \)-strategi\-cally closed posets, and thus indestructibility \(<\kappa \)-strategi\-cally closed, \(\le\kappa \)-distributive posets.  \(<\kappa \)-strategic closure is a significant weakening of \(<\kappa \)-directed closure and is defined as follows.

	\begin{definition}\label{stratcloseddef}
		Let \(\bbb{P}\) be a poset and \(\kappa ,\lambda \) ordinals.  The game \(\mathcal{G}_{\bbb{P}}^\lambda \) is the two person game of length \(\le\lambda \)
		\begin{game*}
			\playerI p_0=\bbone^{\bbb{P}}\et p_2\et \cdots \et p_\omega \et p_{\omega +2}\et\cdots \\
			\playerII p_1\et p_3\et\cdots \et p_{\omega +1}\et\cdots \text{,}
		\end{game*}
		where \(p_\alpha \le^{\bbb{P}} p_\beta \) for \(\alpha \ge\beta \), and assuming such a \(p_\alpha \) exists, \(\textup{\textbf{I}}\) plays \(p_\alpha \) for even \(\alpha < \lambda \) (including limit \(\alpha \)) and \(\textup{\textbf{II}}\) plays \(p_\alpha \in \bb{P}\) for odd \(\alpha <\lambda \).  \(\textup{\textbf{I}}\) wins iff players play on each turn: the resulting sequence of \(p_\alpha \)s has length \(\lambda \).
		\begin{itemize}
			\item \(\bb{P}\) is \emph{\(\le \kappa \)-strategi\-cally closed} iff \(\textup{\textbf{I}}\) has a winning strategy in \(\mathcal{G}_{\bb{P}}^{\kappa +1}\).
			\item \(\bb{P}\) is \emph{\(\kappa \)-strategi\-cally closed} iff \(\textup{\textbf{I}}\) has a winning strategy in \(\mathcal{G}_{\bb{P}}^\kappa \).
			\item \(\bb{P}\) is \emph{\(<\kappa \)-strategi\-cally closed} iff \(\bb{P}\) is \(\le\alpha \)-strategi\-cally closed for all ordinals \(\alpha <\kappa \).
		\end{itemize}
	\end{definition}
	Basically whereas \(<\kappa \)-closure ensures we can always extend a \(\leqslant^{\bbb{P}}\)-decreasing sequence \(\langle p_\alpha :\alpha <\lambda \rangle \) whenever \(\lambda <\kappa \), \(\kappa \)-strategic closure only gives control over half of the sequence, relying on \(\textup{\textbf{I}}\)'s strategy to extend at limits.  
	
	Despite their differences, the similarities between \(<\kappa \)-closed and \(\kappa \)-strategi\-cally closed preorders are enough to show to allow some arguments about \(<\kappa \)-closure to go through about \(\kappa \)-strategic closure.  For example, \(\kappa \)-strategi\-cally closed preorders are \(\le \kappa \)-distributive by basically the same proof as with full closure. There are many other common results in iterated forcing where using strategic closure vs.\ full closure makes no difference~\cite{cummings}.
	
	\begin{definition}\label{κdistribdef}
		Let \(\kappa \) be a cardinal. A preorder \(\bbb{P}\) is \emph{\(<\kappa \)-distributive} iff for every collection \(\mathcal{D}\) of open, dense sets of \(\bbb{P}\), if \(|\mathcal{D}|<\kappa \) then \(\bigcap \mathcal{D}\neq \emptyset \) is open, dense.  We may similarly define \(\le\kappa \)-distributive.
	\end{definition}
	
	\begin{corollary}\label{stratimpliesdistrib1}
		For any infinite cardinal \(\kappa \), if \(\bbb{P}\) is \(<\kappa \)-strategi\-cally closed, then \(\bbb{P}\) is \(<\kappa \)-distributive.
	\end{corollary}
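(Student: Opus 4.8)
The plan is to verify the two clauses of $<\kappa$-distributivity for an arbitrary family $\mathcal{D}$ of open dense subsets of $\bbb{P}$ with $\delta := |\mathcal{D}| < \kappa$. Openness of $\bigcap\mathcal{D}$ is immediate, since if $q \le p \in \bigcap\mathcal{D}$ then $q \in D$ for each $D \in \mathcal{D}$ by openness of $D$, so $q \in \bigcap\mathcal{D}$. The content is in density, and this is precisely where the classical proof for $<\kappa$-closed posets builds a $\le^\bbb{P}$-decreasing sequence and takes lower bounds at limits; here I would instead let $\I$'s winning strategy supply those lower bounds.

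In detail, enumerate $\mathcal{D} = \langle D_i : i < \delta\rangle$ and fix $p \in \bbb{P}$; the goal is to produce $q \le p$ with $q \in \bigcap\mathcal{D}$. Since $\bbb{P}$ is $<\kappa$-strategically closed and $\delta < \kappa$, it is $\le\delta$-strategically closed, so $\I$ has a winning strategy $\sigma$ in $\mathcal{G}_\bbb{P}^{\delta+1}$. I would then run one play of this game in which $\I$ follows $\sigma$ while I dictate $\II$'s moves by hand: at $\II$'s first move pass below $p$ using density, and at $\II$'s subsequent moves step into the dense sets, i.e.\ given $\I$'s most recent condition $p_\alpha$, use density of the next $D_i$ to pick $p_{\alpha+1} \le p_\alpha$ with $p_{\alpha+1} \in D_i$. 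Every such move is legal precisely because each $D_i$ is dense.

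Because $\sigma$ is winning in $\mathcal{G}_\bbb{P}^{\delta+1}$, this play cannot terminate early: it yields a full $\le^\bbb{P}$-decreasing sequence $\langle p_\alpha : \alpha \le \delta\rangle$, hence a terminal condition $p_\delta$ lying below every earlier $p_\alpha$. As $\delta$ is an infinite cardinal it is a limit ordinal, so $\II$ makes $\delta$-many moves at stages strictly below $\delta$; pairing these with the $\delta$-many dense sets, each $D_i$ is entered at some stage $\beta_i < \delta$, and openness of $D_i$ together with $p_\delta \le p_{\beta_i} \in D_i$ gives $p_\delta \in D_i$. Thus $p_\delta \le p$ and $p_\delta \in \bigcap\mathcal{D}$, so $\bigcap\mathcal{D}$ is dense below $p$; as $p$ was arbitrary, $\bigcap\mathcal{D}$ is dense.

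The only delicate point is the bookkeeping in the previous paragraph — ensuring $\II$ receives enough moves, all strictly before the terminal stage, to exhaust $\mathcal{D}$ — which becomes routine once $\delta$ is taken to be a cardinal (and the finite case is trivial, since finite intersections of open dense sets are open dense). The conceptual heart of the argument, and the only place the hypothesis is invoked, is that $\I$'s winning strategy manufactures lower bounds at limit stages; this is exactly the feature that lets a strategic-closure hypothesis stand in for honest $<\kappa$-closure.
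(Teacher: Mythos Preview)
Your argument is correct and is precisely the one the paper has in mind: the paper omits the proof, remarking only that \(\kappa\)-strategically closed preorders are \(\le\kappa\)-distributive ``by basically the same proof as with full closure,'' i.e.\ enumerate the dense open sets, have \(\II\) step into them one by one, and let \(\I\)'s winning strategy supply lower bounds at limits. Your bookkeeping about \(\delta\) being an infinite cardinal so that \(\II\) receives enough odd-indexed moves below \(\delta\) is exactly the small wrinkle one has to check, and you handle it correctly.
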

	
	It's not hard to see that if \(\kappa \) is measurable, its \(\kappa +1\)-strength is weakly indestructible.  So easy models of universal indestructibility for small degrees of strength are those with no large cardinals, or those with only measurables.  Obviously if a poset \emph{isn't} \(<\kappa \)-distributive, it can collapse the strength of \(\kappa \), e.g. through a trivial collapse \(\ffrak{Col}(\omega ,\kappa )\).
	
	\begin{corollary}\label{measureweakindes}
		Let \(\kappa \) be measurable.  Therefore \(\kappa \)'s measurability (i.e.\ its \(\kappa +1\)-strength) is weakly indestructible.
	\end{corollary}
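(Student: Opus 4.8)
The plan is to bypass any lifting of elementary embeddings and instead show that the ground-model normal measure on \(\kappa \) survives the forcing intact. Fix a normal measure \(U\) on \(\kappa \) in \(\ffrak{V}\), and let \(\bbb{P}\) be any poset of the relevant class: by \hlink{fourindestructdef} weak indestructibility of \(\kappa \)'s measurability means indestructibility by \(\le\kappa \)-distributive posets (in the strength convention these are also taken \(<\kappa \)-strategi\-cally closed, but that half of the hypothesis will play no role — only \(\le\kappa \)-distributivity is used). Let \(G\) be \(\bbb{P}\)-generic over \(\ffrak{V}\). I claim \(U\) is still a \(\kappa \)-complete nonprincipal ultrafilter on \(\kappa \) in \(\ffrak{V}[G]\), which immediately witnesses that \(\kappa \) remains measurable there.

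The entire argument rests on one distributivity computation: a \(\le\kappa \)-distributive poset adds no new subsets of \(\kappa \), and a \(<\kappa \)-distributive poset adds no new \(<\kappa \)-sequences of ground-model sets. For the first, suppose \(p\) forces \(\dot f\) to be a function from \(\kappa \) into \(\ffrak{V}\); for each \(\alpha <\kappa \) the set \(D_\alpha =\{q\le p: q\text{ decides }\dot f(\check\alpha )\}\) is open dense below \(p\), so by \(\le\kappa \)-distributivity (\hlink{κdistribdef}) the intersection \(\bigcap _{\alpha <\kappa }D_\alpha \) is dense below \(p\); any \(q\) in it decides every value of \(\dot f\) and hence forces \(\dot f\) to equal a ground-model function. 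Applied to characteristic functions this gives \(\mathcal{P}(\kappa )^{\ffrak{V}[G]}=\mathcal{P}(\kappa )^{\ffrak{V}}\); the same argument with \(\kappa \) replaced by any \(\lambda <\kappa \) shows every \(\lambda \)-sequence drawn from \(\ffrak{V}\) that lies in \(\ffrak{V}[G]\) already lies in \(\ffrak{V}\).

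With these two facts the verification is routine bookkeeping. Since every \(X\in \mathcal{P}(\kappa )^{\ffrak{V}[G]}\) already lies in \(\ffrak{V}\), the measure \(U\) decides each such \(X\), so \(U\) is still an ultrafilter on \(\kappa \) in \(\ffrak{V}[G]\), and it remains nonprincipal because it contains no singletons and none are added. For \(\kappa \)-completeness, given \(\lambda <\kappa \) and a sequence \(\langle X_\alpha :\alpha <\lambda \rangle \) of elements of \(U\) lying in \(\ffrak{V}[G]\), the no-new-short-sequences fact places the sequence in \(\ffrak{V}\), whence \(\bigcap _{\alpha <\lambda }X_\alpha \in U\) by completeness of \(U\) in \(\ffrak{V}\); as this intersection is absolute between \(\ffrak{V}\) and \(\ffrak{V}[G]\), it is exactly what is needed.

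I expect the only genuine content to be the distributivity computation of the second paragraph, with everything else mere assembly — and even that computation is the textbook characterization of distributivity, which is why the remark preceding the corollary can fairly call the statement ``not hard to see.'' The one point to keep honest is the direction in which each half is used: ultrafilter-ness requires the full strength \(\le\kappa \)-distributivity in order to freeze all of \(\mathcal{P}(\kappa )\), whereas \(\kappa \)-completeness needs only the weaker \(<\kappa \)-distributivity, so both are invoked in their expected places.
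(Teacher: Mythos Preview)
Your proof is correct and follows exactly the paper's approach: the paper's proof is the one-line observation that a ground-model measure \(U\) remains a measure in \(\ffrak{V}^{\bbb{P}}\) whenever \(\bbb{P}\) is \(\le\kappa\)-distributive, and you have simply unpacked that line into its constituent verifications (no new subsets of \(\kappa\), no new short sequences, hence ultrafilter-ness and \(\kappa\)-completeness persist).
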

	\begin{proof}
		If \(U\in \mathrm{V}\) is a measure and \(\bbb{P}\in \mathrm{V}\) is a \(\le\kappa \)-distributive poset, then \(U\in \mathrm{V}\subseteq \mathrm{V}^{\bbb{P}}\) is still a measure in \(\ffrak{V}^{\bbb{P}}\).\qedhere
	\end{proof}
	
	The rest of this document will assume familiarity with strong cardinals and their fundamental properties.
	
	\begin{definition}\label{strongdef}
		\hfill
		\begin{itemize}
			\item A cardinal \(\kappa \) is \emph{\(\lambda \)-strong} iff there is an elementary embedding \(j:\mathrm{V}\rightarrow \mathrm{M}\) (as a class of \(\ffrak{V}\)) with \(\cp(j)=\kappa \), \(j(\kappa )>\lambda \), and \(\ffrak{V}\mid\lambda =\ffrak{M}\mid\lambda \).
			\item A cardinal is \emph{strong} iff it is \(\lambda \)-strong for every \(\lambda \in \mathrm{Ord}\).
			\item We call embeddings (extenders) witnessing \(\lambda \)-strength \emph{\(\lambda \)-strong embeddings (extenders)}.
		\end{itemize}
	\end{definition}
	
	Beyond their definition, strong cardinals are useful for the following reflection principle.
	
	\begin{lemma}[\(\Sigma _2\)-reflection]\label{Σ2ref}
		Let \(\kappa \) be a strong cardinal.  Suppose \(\ffrak{V}\mid \delta \vDash \dblq{\varphi (\vec{x})}\) for some formula \(\varphi \) and parameters \(\vec{x}\in \mathrm{V}\mid \kappa \).  Therefore there are unboundedly many \(\alpha <\kappa \) such that \(\ffrak{V}\mid \alpha \vDash \dblq{\varphi (\vec{x})}\).
	\end{lemma}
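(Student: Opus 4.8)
The plan is to reflect the hypothesis through a single strong embedding, after converting it into a first-order statement that elementarity can act on. First I would reduce the unbounded conclusion to a pointwise one: it suffices to show that for every \(\gamma <\kappa \) there is some \(\alpha \) with \(\gamma <\alpha <\kappa \) and \(\ffrak{V}\mid\alpha \vDash \dblq{\varphi (\vec{x})}\). Next I would note that, for the fixed formula \(\varphi \), the assertion \(\dblq{\ffrak{V}\mid\beta \vDash \varphi (\vec{x})}\) is itself expressible by a first-order formula \(\Phi (\beta ,\vec{x})\) in the language of set theory, since satisfaction in the set-sized structure \(\ffrak{V}\mid\beta \) is uniformly definable from \(\beta \) and the parameters. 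This definability is exactly what lets \(j\) operate on the statement.

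The core step uses a \(\delta \)-strong embedding \(j:\mathrm{V}\rightarrow \mathrm{M}\) with \(\cp(j)=\kappa \), \(j(\kappa )>\delta \), and \(\ffrak{V}\mid\delta =\ffrak{M}\mid\delta \), which exists because \(\kappa \) is strong (see \hlink{strongdef} with \(\lambda =\delta \)). Since \(\vec{x}\in \mathrm{V}\mid\kappa \) and \(\gamma <\kappa =\cp(j)\), both \(\vec{x}\) and \(\gamma \) are fixed by \(j\). The agreement \(\ffrak{V}\mid\delta =\ffrak{M}\mid\delta \) together with the hypothesis \(\ffrak{V}\mid\delta \vDash \dblq{\varphi (\vec{x})}\) gives \(\mathrm{M}\vDash \Phi (\delta ,\vec{x})\); and because \(\gamma <\delta <j(\kappa )\), the ordinal \(\delta \) witnesses, inside \(\mathrm{M}\), the bounded existential \(\exists \beta \,(\gamma <\beta <j(\kappa )\wedge \Phi (\beta ,\vec{x}))\). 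Reading this as \(j\) applied to \(\exists \beta \,(\gamma <\beta <\kappa \wedge \Phi (\beta ,\vec{x}))\) and using elementarity of \(j\) in the downward direction yields an \(\alpha \) with \(\gamma <\alpha <\kappa \) and \(\ffrak{V}\mid\alpha \vDash \dblq{\varphi (\vec{x})}\). As \(\gamma <\kappa \) was arbitrary, the witnesses are unbounded in \(\kappa \).

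The one place that requires care—and the step I expect to be the real crux—is using \(\delta \) as a witness \emph{strictly above} \(\gamma \): this needs \(\gamma <\delta \) for every \(\gamma <\kappa \), i.e.\ \(\delta \ge \kappa \), which is the intended reading of the hypothesis (the level \(\delta \) witnessing a \(\Sigma _2\) fact lies above \(\kappa \)). Some such constraint is genuinely necessary, since a small \(\delta \) can trap \(\varphi \): for instance the negation of the axiom of infinity holds in \(\ffrak{V}\mid\omega \) but in no later stage, so it reflects only boundedly. Everything else is routine and is packaged by \(\delta \)-strength: the definability of satisfaction, the triviality of \(j\) on \(\vec{x}\) and \(\gamma \), and the simultaneous guarantee of \(\ffrak{V}\mid\delta =\ffrak{M}\mid\delta \) and \(j(\kappa )>\delta \). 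The main conceptual content is simply recognizing that \dblq{there is an initial segment above \(\gamma \) modelling \(\varphi (\vec{x})\)} is the correct first-order statement to push through \(j\), turning one reflection point into an unbounded set.
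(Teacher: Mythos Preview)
Your proof is correct and follows essentially the same route as the paper's: take a \(\delta\)-strong embedding, observe that \(\delta\) witnesses in \(\ffrak{M}\) a level below \(j(\kappa)\) satisfying \(\varphi\), and pull back by elementarity. The only cosmetic difference is that the paper obtains unboundedness by inserting an arbitrary \(\beta<\kappa\) as a dummy parameter into \(\vec{x}\) (forcing \(\alpha>\rank(\beta)\)) rather than by fixing \(\gamma\) upfront, and it does not explicitly address the \(\delta\ge\kappa\) caveat you correctly flag.
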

	\begin{proof}
		Let \(j:\mathrm{V}\rightarrow \mathrm{M}\) be a \(\delta \)-strong embedding so that \(j(\vec{x})=\vec{x}\), \(\delta <j(\kappa )\), and \(\ffrak{M}\mid \delta =\ffrak{V}\mid \delta \vDash \dblq{\varphi (\vec{x})}\).  In \(\ffrak{M}\), there is then some level of the cumulative hierarchy below \(j(\kappa )\) that satisfies \(\varphi (j(\vec{x}))\) and so by elementarity, in \(\ffrak{V}\) there's a level \(\mathrm{V}\mid \alpha \) of the cumulative hierarchy with \(\rank(\vec{x})<\alpha <\kappa \) satisfying \(\varphi (\vec{x})\).  Because we could have thrown in any fixed ordinal \(\beta <\kappa \) as a useless parameter into \(\vec{x}\), the idea above gives an \(\alpha >\beta \) with this property. \qedhere
	\end{proof}
	
	We now establish some definitions related to \cite{aptersargsyan}, explaining \hlink{aptsargmain}.  
	\begin{definition}\label{hyperstrongdef}
		Let \(\kappa \) and \(\alpha >0\) be ordinals.  We define by transfinite recursion what it means for \(\kappa \) to be \emph{\(\alpha \)-hyperstrong}.
		\begin{itemize}
			\item \(\kappa \) is \emph{\(0\)-hyperstrong} iff \(\kappa \) is strong.
			\item \(\kappa \) is \emph{\(<\alpha \)-hyperstrong} iff \(\kappa \) is \(\xi \)-hyperstrong for every \(\xi <\alpha \).
			\item \(\kappa \) is \emph{\(\alpha \)-hyperstrong} iff for any \(\lambda >\kappa \), there is an extender \(E\) giving an ultrapower embedding \(j_E:\mathrm{V}\rightarrow \mathrm{M}\) with \(\cp(j_E)=\kappa \), \(j_E(\kappa )>|\mathrm{V}\mid \lambda |\), \(\mathrm{V}\mid \lambda \subseteq \mathrm{M}\), and \(\ffrak{M}\vDash \dblq{\kappa \text{ is }<\alpha \text{-hyperstrong}}\).
			\item \(\kappa \) is \emph{hyperstrong} iff \(\kappa \) is \(\xi \)-hyperstrong for every ordinal \(\xi \).
		\end{itemize}
	\end{definition}
	We will work with hyperstrongs mostly to show that they are insufficient in establishing the main result of \hlink{maingoal}.  We will also make use of strongs reflecting strongs. We will discuss the interaction between these two definitions in the next subsection.
	
	\begin{definition}\label{srsdef}
		Let \(\kappa \) be a cardinal and \(\lambda \) an ordinal.
		\begin{itemize}
			\item We say \(\kappa \) is \emph{\(\lambda \)-strong reflecting strongs (\(\lambda \)-\emph{srs})} iff there's a \(\lambda \)-strong \((\kappa ,\lambda )\)-extender \(E\) giving an elementary \(j_E:\mathrm{V}\rightarrow \mathrm{M}=\Ult_E(\mathrm{V})\) with
			\[\{\xi <\lambda :\xi \text{ is strong}\}=\{\xi <\lambda :\ffrak{M}\vDash \dblq{\xi \text{ is strong}}\}\text{.}\]
			\item We call such an embedding (extender) a \emph{\(\lambda \)-srs embedding (\emph{extender})}.
			\item We say \(\kappa \) is \emph{srs} iff \(\kappa \) is \(\lambda \)-srs for every \(\lambda >\kappa \); and we say \(\kappa \) is \(<\lambda \)-srs if \(\kappa \) is \(\alpha \)-srs for each \(\alpha <\lambda \).
		\end{itemize}
	\end{definition}
	
	For the most part, we only care about strongs reflecting strongs when they have strongs above them since if there is no strong above \(\kappa \), \(\kappa \) is srs iff \(\kappa \) is \(1\)-hyperstrong.  But under the not-uncommon assumption of a strong above, this is a strengthening of hyperstrong cardinals both in the sense of being hyperstrong and in having a (much) higher consistency strength. The  (2) implies (1) direction of \hlink{maingoal} then tells us that a proper class of hyperstrong cardinals is insufficient to ensure universal weak indestructibility for very small degrees of strength with a proper class of strongs.
	
	For now, we end this subsection with the following definition.
	
	\begin{definition}\label{UWISSdef}
		\emph{\(\axm{UWISS}\)} \emph{(Universal weak indestructibility for small degrees of strength)} is the proposition that for all \(\kappa \), if \(\kappa \) is \(\kappa +2\)-strong, then \(\kappa \)'s \(\kappa +2\)-strength is weakly indestructible by \(<\kappa \)-strategi\-cally closed posets.
	\end{definition}
	This will be equiconsistent with every \(\rho \)-strong \(\kappa \) having weakly indestructible \(\rho \)-strength for \(\rho \) below the next measurable limit of measurables above \(\kappa \).

	\subsection{Limiting results}

	The limit imposed on universal indestructibility was explored in \cite{apterhamkins} for full indestructibility, but we may consider weak indestructibility in a similar way.  The main limiting result is the following with the proof adapted from \cite{apterhamkins} and the Superdestruction Theorem III from \cite{hamkinssuper}.
	
	\begin{result}\label{hamkinsoopsy}
		Assume \(\axm{GCH}\).\footnotemark{} Suppose \(\kappa \) is strong and this is not destroyed by \(\ffrak{Add}(\kappa ^+,1)\).  Suppose \(\kappa ^{+\sharp\sharp}\) exists. Therefore, there are arbitrarily large \(\delta <\kappa \) whose \(\delta +2\)-strength is destroyed by \(\ffrak{Add}(\delta ^+,1)\).
	\end{result}
	\footnotetext{Note that the assumption of \(\axm{GCH}\) here isn't much of a problem, since in the context of indestructibility by a large class of posets, one can just force the appropriate instances with Cohen forcing.  More precisely, for the theorem to go through, we merely need \(\lambda =\kappa ^{+\sharp\sharp}\) to have \(2^{2^\lambda }=\lambda ^{++}\) or else we must consider a \(\lambda >\kappa \) that is \(\gamma \)-strong where \(2^{2^{\lambda }}=\lambda ^{+\gamma }\).}
	\begin{proof}
		Let \(A\) be \(\ffrak{Add}(\kappa ^+,1)\)-generic over \(\ffrak{V}\). By hypothesis, \(\kappa \) is still strong in \(\ffrak{V}[A]\) and so we get a sufficiently strong embedding \(j:\mathrm{V}[A]\rightarrow \mathrm{M}[j(A)]\) with \(\cp(j)=\kappa \) so that \(\lambda =\kappa ^{+\sharp\sharp}\) is still \(\lambda +2\)-strong in \(\ffrak{M}[j(A)]\).
		\begin{claim}\label{hamkinsoopsycl1}
			\(\lambda \)'s \(\lambda +2\)-strength is destroyed by \(\ffrak{Add}(\lambda ^+,1)\) in \(\ffrak{V}[A]\).
		\end{claim}
		\begin{proof}
			Suppose not.  Let \(G\) be \(\ffrak{Add}(\lambda ^+,1)\)-generic over \(\ffrak{V}[A]\) and let \(i:\mathrm{V}[A*G]\rightarrow \mathrm{N}[i(A*G)]\) be \(\lambda +2\)-strong with \(\cp(i)=\lambda \).  Since \(A\subseteq \kappa <\cp(i)\), \(i(A*G)=A*i(G)\).  Without loss of generality, \(i\) is generated by extenders such that \(\mathrm{N}[A*i(G)]\) is closed under \(\lambda \)-sequences of \(\mathrm{V}[A*G]\).
			
			By gap forcing \cite{GapForcing}, \(i\upharpoonright\mathrm{V}:\mathrm{V}\rightarrow \mathrm{N}\) is a class of \(\mathrm{V}\), is closed under \(\lambda \)-sequences, and is still \(\lambda +2\)-strong.  It follows that \(G\subseteq \lambda ^+\) is in \(\mathrm{H}_{\lambda ^{++}}^{\ffrak{V}[A*G]}=\mathrm{H}_{\lambda ^{++}}^{\ffrak{N}[A*i(G)]}\) and hence in \(\mathrm{N}[A*i(G)]\), but wasn't added by \(i(\ffrak{Add}(\lambda ^+,1))\) by \(\le i(\lambda ^+)\)-distributivity in \(\ffrak{N}[A]\).  Thus \(G=\dot G_A\in \mathrm{N}[A]\) for some \(\ffrak{Add}(\kappa ^+,1)\)-name \(\dot G\) which can be thought of as a function from \((\lambda ^+)^{\ffrak{V}}=(\lambda ^+)^{\ffrak{N}}\) to antichains of \(\ffrak{Add}(\kappa ^+,1)\), and is therefore in the hereditarily \(<(\lambda ^{++})^{\ffrak{N}}=(\lambda ^{++})^{\ffrak{V}}\)-sized sets \(\mathrm{H}_{\lambda ^{++}}^{\ffrak{N}}=\mathrm{H}_{\lambda ^{++}}^{\ffrak{V}}\).  The ability to consider all of this in \(\ffrak{V}\) means \(G\in \mathrm{V}[A]\), a contradiction.\qedhere
		\end{proof}
		Without loss of generality, \(\ffrak{M}[j(A)]\) has enough agreement with \(\ffrak{V}[A]\) to witness this fact as well.  Since \(\kappa <\lambda <j(\kappa )\), this is reflected down into \(\ffrak{V}[A]\): there are arbitrarily large \(\delta <\kappa \) such that \(\delta \)'s \(\delta +2\)-strength is destroyed by \(\ffrak{Add}(\delta ^+,1)\) in \(\ffrak{V}[A]\).  It follows that \(\ffrak{Add}(\kappa ^+,1)*\ffrak{Add}(\delta ^+,1)\) is \(<\delta \)-directed closed and \(\le\delta \)-distributive in \(\ffrak{V}\) but destroys \(\delta \)'s \(\delta +2\)-strength.  Since \(\ffrak{Add}(\kappa ^+,1)\) is sufficiently distributive, it can't add to nor destroy \(\delta \)'s \(\delta +2\)-strength.  So it must be that \(\ffrak{Add}(\delta ^+,1)\) destroyed \(\delta \)'s \(\delta +2\)-strong\qedhere
	\end{proof}
	
	And this result generalizes to larger degrees of strength with the same proof.
	
	\begin{corollary}\label{hamkinsoopsygen}
		Assume \(\axm{GCH}\).  Let \(\mu \in \mathrm{Ord}\).  Suppose \(\kappa \) is strong, and this is not destroyed by \(\ffrak{Add}(\kappa ^{+\mu },1)\).  Suppose there is a \(\lambda >\kappa ^{+\mu +1}\) that is \(\lambda +\mu +1\)-strong.  Therefore
		\begin{enumerate}
			\item There are unboundedly many \(\delta <\kappa \) such that \(\delta \) is \(\delta +\rho \)-strong for some \(\rho \), but this is destructible by \(\ffrak{Add}(\delta ^{+\rho },1)\).
			\item If \(\mu <\kappa \), then there are arbitrarily large \(\delta <\kappa \) such that \(\delta \)'s \(\delta +\mu +1\)-strength is destructible by \(\ffrak{Add}(\delta ^{+\mu },1)\).
		\end{enumerate}
	\end{corollary}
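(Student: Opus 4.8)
The plan is to run the proof of \hlink{hamkinsoopsy} essentially verbatim, making the uniform substitutions $\gamma^{+}\rightsquigarrow\gamma^{+\mu}$, $\ffrak{Add}(\gamma^{+},1)\rightsquigarrow\ffrak{Add}(\gamma^{+\mu},1)$, and degree ``$+2$'' $\rightsquigarrow$ ``$+\mu+1$'' throughout. First I would let $A$ be $\ffrak{Add}(\kappa^{+\mu},1)$-generic over $\ffrak{V}$; by hypothesis $\kappa$ remains strong in $\ffrak{V}[A]$, so I obtain an embedding $j:\mathrm{V}[A]\to\mathrm{M}[j(A)]$ with $\cp(j)=\kappa$, chosen strong enough (using the witness $\lambda>\kappa^{+\mu+1}$ to $\lambda+\mu+1$-strength, and agreement past rank $\lambda+\mu+1$) that $\lambda$ is still $\lambda+\mu+1$-strong in $\mathrm{M}[j(A)]$.

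The heart of the matter is the analog of the internal claim of \hlink{hamkinsoopsy}: that $\lambda$'s $\lambda+\mu+1$-strength is destroyed by $\ffrak{Add}(\lambda^{+\mu},1)$ in $\ffrak{V}[A]$. I would argue as before by contradiction, taking a surviving $\lambda+\mu+1$-strong $i:\mathrm{V}[A*G]\to\mathrm{N}[i(A*G)]$ with $\cp(i)=\lambda$, noting $i(A*G)=A*i(G)$ since $A\subseteq\kappa<\cp(i)$, applying gap forcing so that $i\upharpoonright\mathrm{V}$ is a class of $\mathrm{V}$ closed under $\lambda$-sequences, and then trapping $G\subseteq\lambda^{+\mu}$ inside $\mathrm{H}_{\lambda^{+\mu+1}}^{\mathrm{N}[A*i(G)]}=\mathrm{H}_{\lambda^{+\mu+1}}^{\mathrm{V}[A*G]}$, where it could not have been added by the $\le i(\lambda^{+\mu})$-distributive poset $i(\ffrak{Add}(\lambda^{+\mu},1))$. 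Reading $G$ as an $\ffrak{Add}(\kappa^{+\mu},1)$-name inside $\mathrm{H}_{\lambda^{+\mu+1}}^{\mathrm{N}}=\mathrm{H}_{\lambda^{+\mu+1}}^{\mathrm{V}}$ (here \axm{GCH} supplies the needed cardinal arithmetic) then places $G$ in $\mathrm{V}[A]$, the desired contradiction.

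With the claim established and transferred to $\mathrm{M}[j(A)]$ by agreement, I reflect below $j(\kappa)$ using $\kappa<\lambda<j(\kappa)$. For (2) I exploit $\mu<\kappa=\cp(j)$, so $j(\mu)=\mu$ and the literal statement ``$\delta$'s $\delta+\mu+1$-strength is destroyed by $\ffrak{Add}(\delta^{+\mu},1)$'' reflects to unboundedly many $\delta<\kappa$; note $\delta^{+\mu}<\kappa$ since $\kappa$ is inaccessible. For (1), where $\mu$ may be $\ge\kappa$, I instead reflect the weaker existential statement that $\delta$ carries \emph{some} degree of strength destroyed by the associated Cohen forcing $\ffrak{Add}(\delta^{+\rho},1)$, so the witnessing degree $\rho$ is allowed to descend under $j$ and need not be the fixed $\mu$. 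Finally I factor: since $\ffrak{Add}(\kappa^{+\mu},1)$ is $<\kappa^{+\mu}$-distributive it adds no subsets of the relevant small cardinals, neither creates nor destroys $\delta$'s degree of strength, and commutes with $\ffrak{Add}(\delta^{+\mu},1)$ (resp. $\ffrak{Add}(\delta^{+\rho},1)$), so the destruction witnessed in $\ffrak{V}[A]$ already occurs over $\ffrak{V}$.

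I expect the main obstacle to be bookkeeping rather than any new idea: confirming that the cardinal-arithmetic identities $\mathrm{H}_{\lambda^{+\mu+1}}^{\mathrm{N}}=\mathrm{H}_{\lambda^{+\mu+1}}^{\mathrm{V}}$ used to trap $G$ survive the passage from a single successor to the $\mu$-fold successor, and—most delicately—ensuring in part (1) that the existentially obtained degree $\rho$ remains small enough (in particular that $\delta^{+\rho}$ stays dominated by the distributivity of $\ffrak{Add}(\kappa^{+\mu},1)$ and that $\delta+\rho$-strength is genuinely preserved by it) for the concluding factoring step to go through without fixing $\mu$ below $\kappa$.
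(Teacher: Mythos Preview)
Your proposal is correct and follows essentially the same route as the paper: force with \(\ffrak{Add}(\kappa^{+\mu},1)\), establish the analogue of \hlink{hamkinsoopsycl1} for \(\lambda\) via gap forcing and the \(\mathrm{H}_{\lambda^{+\mu+1}}\) computation, transfer to \(\ffrak{M}[j(A)]\), reflect below \(j(\kappa)\) (using \(j(\mu)=\mu\) for (2) and an existential \(\rho\) for (1)), and then factor out the initial Cohen step by distributivity. The one obstacle you flag for (1)---bounding \(\delta+\rho\) below the distributivity of \(\ffrak{Add}(\kappa^{+\mu},1)\)---is precisely the extra ingredient the paper supplies: one argues \(\delta+\rho<\kappa\), since otherwise \(\delta\) would be \({<}\kappa\)-strong in \(\ffrak{V}[A]\) where \(\kappa\) is strong, and \hlink{Σ2ref} would force \(\delta\) to be fully strong, contradicting the destruction; with \(\delta+\rho<\kappa\) the factoring goes through as you outline.
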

	\nothing{\begin{proof}
			The exact same proof as with \hlink{hamkinsoopsy} tells us that after forcing with \(\ffrak{Add}(\kappa ^{+\mu },1)\) to get \(A\subseteq \kappa ^{+\mu }\), a sufficiently strong embedding \(j:\mathrm{V}[A]\rightarrow \mathrm{M}\) with \(\cp(j)=\kappa \) yields that \(\lambda \)'s \(\lambda +\mu +1\)-strength in \(\ffrak{V}[A]\) and \(\ffrak{M}[j(A)]\) is destroyed by \(\ffrak{Add}(\lambda ^{+\mu },1)\).  If \(\mu =j(\mu )<\kappa \), then we can reflect the following statement: for each \(\alpha =j(\alpha )<\kappa \),
			\begin{multline*}
				\ffrak{M}[j(A)]\vDash \dlq\exists \delta \ (j(\alpha )<\delta <j(\kappa )\wedge \delta \text{ is }\delta +j(\mu )+1\text{-strong}\\
				\mathrel{\wedge }\delta \text{'s }\delta +j(\mu )+1\text{-strength is destroyed by }\ffrak{Add}(\delta ^{+j(\mu )},1)\drq\text{,}
			\end{multline*}
			to get (2) in \(\ffrak{V}[A]\).  If \(\mu \ge\kappa \), we at least can reflect, for each \(\alpha <\kappa \),
			\begin{multline*}
				\ffrak{M}[j(A)]\vDash \dlq\exists \delta \ \exists \rho \ (j(\alpha )<\delta <j(\kappa )\wedge \delta \text{ is }\rho \text{-strong}\\
				\mathrel{\wedge }\delta \text{'s }\rho \text{-strength }
				\text{is destroyed by }\ffrak{Add}(\delta ^{+\rho },1)\drq\text{,}
			\end{multline*}
			to get (1) in \(\ffrak{V}[A]\).  Note that \(\delta +\rho <\kappa \) (or \(\delta +\mu \) if \(\mu <\kappa \)) since otherwise \(\delta \) would be \(<\kappa \)-strong where \(\kappa \) is strong.  By \hlink{Σ2ref}, this would mean that \(\delta \) is strong since the lack of any extenders would be reflected down below \(\kappa \).  Because we destroyed strength, \(\delta \) wouldn't be strong, and so we'd have a contradiction.  So since \(\delta +\rho <\kappa \), distributivity of \(\ffrak{Add}(\kappa ^{+\mu },1)\) doesn't affect \(\delta \)'s strength, and hence \(\delta \) would have been destructible in \(\ffrak{V}\) by \(\ffrak{Add}(\delta ^{+\rho },1)\).\qedhere
	\end{proof}}
	In particular, one cannot have universal weak indestructibility for \emph{all} degrees of strength with two strong cardinals. This also easily generalizes to supercompactness in place of strength.
	
	The contrapositive to \hlink{hamkinsoopsy} details one aspect of what goes wrong if one naïvely tries to continue the preparation from either \cite{apterhamkins} or \cite{aptersargsyan}.
	\begin{corollary}\label{hamkinsoopsyv2}
		Assume \(\axm{GCH}\). Suppose \(\kappa \) is strong and \(\kappa ^{+\sharp\sharp}\) exists.  Suppose every \(\delta <\kappa \) that is \(\delta +2\)-strong has this indestructible by \(\ffrak{Add}(\delta ^+,1)\).  Therefore \(\kappa \)'s strength is destroyed by \(\ffrak{Add}(\kappa ^+,1)\).
	\end{corollary}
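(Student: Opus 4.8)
The plan is to recognize that this corollary is nothing more than the contrapositive of \hlink{hamkinsoopsy}, repackaged by keeping the side hypotheses ($\axm{GCH}$, strength of $\kappa $, existence of $\kappa ^{+\sharp\sharp}$) fixed and negating the remaining assumption and the conclusion. So I would argue by contradiction and reduce directly to \hlink{hamkinsoopsy}.

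First I would assume toward a contradiction that $\kappa $'s strength is \emph{not} destroyed by $\ffrak{Add}(\kappa ^+,1)$. Together with the standing hypotheses of the corollary—namely $\axm{GCH}$, that $\kappa $ is strong, and that $\kappa ^{+\sharp\sharp}$ exists—this is precisely the list of hypotheses of \hlink{hamkinsoopsy}. Applying that result, I obtain arbitrarily large $\delta <\kappa $ (in particular, at least one such $\delta $) that is $\delta +2$-strong but whose $\delta +2$-strength is destroyed by $\ffrak{Add}(\delta ^+,1)$.

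Finally, this conclusion stands in immediate conflict with the corollary's remaining hypothesis, which asserts that \emph{every} $\delta <\kappa $ that is $\delta +2$-strong has that strength indestructible by $\ffrak{Add}(\delta ^+,1)$. Having reached a contradiction, the assumption must fail, so $\kappa $'s strength is in fact destroyed by $\ffrak{Add}(\kappa ^+,1)$, as desired.

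The only point requiring any care—and it is the closest thing to an obstacle here—is verifying that the indestructibility hypothesis of the corollary really is the negation of the existential conclusion of \hlink{hamkinsoopsy}. One checks the quantifier structure: the earlier result produces \emph{unboundedly many} destructible $\delta <\kappa $, hence at least one, whereas the present hypothesis says \emph{no} $\delta +2$-strong $\delta <\kappa $ is destructible in this way; so the two are genuinely contradictory and the reduction is legitimate. Beyond this bookkeeping there is no new content, since all the forcing and reflection work has already been carried out in \hlink{hamkinsoopsy}.
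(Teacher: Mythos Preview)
Your proposal is correct and matches the paper's treatment: the paper explicitly introduces this corollary as ``the contrapositive to \hlink{hamkinsoopsy}'' and gives no separate proof, so your contradiction argument reducing directly to \hlink{hamkinsoopsy} is exactly the intended reasoning.
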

	All of this is just to justify that one simultaneously cannot have \(\axm{UWISS}\) with any reasonable amount of indestructibility for any large degrees of strength.  Moreover, \hlink{hamkinsoopsy} and \hlink{hamkinsoopsyv2} aren't unique to weak indestructibility as noted in the original Theorems 10, 11, and 12 of \cite{apterhamkins}, which hold for full indestructibility.  Really what \hlink{hamkinsoopsyv2} and \hlink{hamkinsoopsy} tell us is that the only way to ensure universal indestructibility for \emph{all} degrees of strength with a strong cardinal \(\kappa \) is to ensure \(\kappa ^{+\sharp\sharp}\) doesn't exist.  And this is exactly what \cite{apterhamkins} and \cite{aptersargsyan} do.
	
	Let us now show why—assuming the equiconsistency of \hlink{maingoal}—hyper\-strongs are insufficient to establish \(\axm{UWISS}\) with a proper class of (weakly destructible) strongs.  To do this, we need to unfortunately look at the fine details of calculating hyperstrongs and strongs reflecting strongs inside the cumulative hierarchy.  A straightforward, somewhat tedious induction gives the following, recalling \hlink{hyperstrongdef} for the definition of a hyperstrong cardinal.
	
	\begin{corollary}\label{strhypstr}
		Fix \(\kappa ,\alpha \in \mathrm{Ord}\) with \(\kappa \) infinite. Therefore the following are equivalent.
		\begin{enumerate}
			\item \(\kappa \) is \(<\alpha \)-hyperstrong.
			\item for all limit \(\delta \ge|\kappa +\alpha |^+\), \(\ffrak{V}\mid\delta \vDash \dblq{\kappa \text{ is }<\alpha \text{-hyperstrong}}\).
		\end{enumerate}
	\end{corollary}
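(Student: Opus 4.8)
The plan is to prove the result by induction on \(\alpha\), establishing the single-level biconditional
\[
\kappa\text{ is }\alpha\text{-hyperstrong}\iff\text{for all limit }\delta\ge|\kappa+\alpha+1|^+,\ \mathrm{V}\mid\delta\vDash\dblq{\kappa\text{ is }\alpha\text{-hyperstrong}},
\]
from which the stated equivalence for \(<\alpha\)-hyperstrength follows by intersecting over \(\xi<\alpha\): being \(<\alpha\)-hyperstrong is the conjunction of being \(\xi\)-hyperstrong, and \(\mathrm{V}\mid\delta\vDash\dblq{\kappa\text{ is }<\alpha\text{-hyperstrong}}\) unwinds to the same conjunction inside \(\mathrm{V}\mid\delta\). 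Since \(|\kappa+\xi+1|^+\le|\kappa+\alpha|^+\) for \(\xi<\alpha\), the uniform threshold \(|\kappa+\alpha|^+\) from the statement dominates every single-level threshold; the thresholds only need to be large enough that \(\kappa\) and \(\alpha\) are available in \(\mathrm{V}\mid\delta\) and the requisite cardinal arithmetic is reflected. The base case \(\alpha=0\) is the standard fact that strength is \(\Pi_2\) and is captured levelwise: a \(\lambda\)-strong extender has rank just above \(\lambda\), so for \(\lambda<\delta\) it already lives in \(\mathrm{V}\mid\delta\), and this is what lets strength pass between \(\mathrm{V}\) and all tall enough \(\mathrm{V}\mid\delta\).

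The one structural tool throughout is the local character of extender ultrapowers: if \(E\) is a \((\kappa,\lambda)\)-extender with \(j_E\colon\mathrm{V}\to\mathrm{M}=\Ult_E(\mathrm{V})\) and \(\delta>\lambda\) is a limit, then \(E\in\mathrm{V}\mid\delta\), the internally computed ultrapower \(\Ult_E(\mathrm{V}\mid\delta)\) equals the initial segment \(\mathrm{M}\mid j_E(\delta)\), and \(j_E\restriction\mathrm{V}\mid\delta\) is its embedding. For the forward direction I fix a limit \(\delta\) above the threshold and \(\lambda<\delta\), take a \(\mathrm{V}\)-witness \(E\) to \(\alpha\)-hyperstrength for \(\lambda\) of bounded rank (so \(E\in\mathrm{V}\mid\delta\)), and must see that \(\mathrm{M}\mid j_E(\delta)\vDash\dblq{\kappa\text{ is }<\alpha\text{-hyperstrong}}\). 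Since \(\mathrm{M}\vDash\dblq{\kappa\text{ is }\xi\text{-hyperstrong}}\) for each \(\xi<\alpha\) and \(\mathrm{M}\vDash\axm{ZFC}\), I apply the induction hypothesis \emph{inside} \(\mathrm{M}\) to push this belief down from \(\mathrm{M}\) to its segment \(\mathrm{M}\mid j_E(\delta)\) (here \(j_E(\delta)\) is a limit above the threshold computed in \(\mathrm{M}\)); thus \(E\) witnesses \(\alpha\)-hyperstrength in \(\mathrm{V}\mid\delta\). This direction works for \emph{every} sufficiently large limit \(\delta\), exactly because the witnessing extenders have bounded rank.

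For the backward direction, fix \(\lambda\) and a limit \(\delta>\lambda\) with \(\mathrm{V}\mid\delta\vDash\dblq{\kappa\text{ is }\alpha\text{-hyperstrong}}\); an internal witness \(E\in\mathrm{V}\mid\delta\) gives \(\mathrm{M}\mid j_E(\delta)=\Ult_E(\mathrm{V}\mid\delta)\vDash\dblq{\kappa\text{ is }<\alpha\text{-hyperstrong}}\), and the critical point, the clause \(j_E(\kappa)>|\mathrm{V}\mid\lambda|\), and \(\mathrm{V}\mid\lambda\subseteq\mathrm{M}\) all transfer to the full \(\mathrm{M}=\Ult_E(\mathrm{V})\) since they concern only the part of \(\mathrm{M}\) below \(j_E(\delta)\). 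The difficulty—and the step I expect to be the main obstacle—is promoting the \emph{belief} \(\mathrm{M}\mid j_E(\delta)\vDash\dblq{\kappa\text{ is }<\alpha\text{-hyperstrong}}\) to the genuine \(\mathrm{M}\vDash\dblq{\kappa\text{ is }<\alpha\text{-hyperstrong}}\): \(\mathrm{M}\) is a proper class and the reflected data controls only its initial segment up to \(j_E(\delta)\), so the induction hypothesis applied in \(\mathrm{M}\) cannot on its own reach the levels of \(\mathrm{M}\) beyond \(j_E(\delta)\).

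To break this, I would track as an extra clause of the induction that \(\dblq{\kappa\text{ is }\alpha\text{-hyperstrong}}\) is expressible at a fixed level \(\Sigma_n\) of the Lévy hierarchy, uniformly in the parameters: the base case is the \(\Pi_2\)-definability of strength, and the recursion preserves the level because each witnessing extender can be taken of bounded rank, so the outer quantifier over \(\lambda\) is the only unbounded one. Then in the backward direction I do not use an arbitrary \(\delta\) but choose one with \(\mathrm{V}\mid\delta\prec_{\Sigma_n}\mathrm{V}\); by the Lévy reflection theorem these \(\delta\) form a club and so are cofinal, hence available above any given \(\lambda\), which is all the backward direction requires. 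For such a \(\delta\), the \(\Sigma_n\) assertion \(\dblq{\Ult_E(\mathrm{V})\vDash\kappa\text{ is }<\alpha\text{-hyperstrong}}\) (with parameter \(E\)) is absolute between \(\mathrm{V}\mid\delta\) and \(\mathrm{V}\); its relativization to \(\mathrm{V}\mid\delta\) is precisely \(\mathrm{M}\mid j_E(\delta)\vDash\dblq{\kappa\text{ is }<\alpha\text{-hyperstrong}}\), which we have, so it lifts to \(\mathrm{M}\vDash\dblq{\kappa\text{ is }<\alpha\text{-hyperstrong}}\) in full. This produces a genuine \(\mathrm{V}\)-witness and closes the induction.
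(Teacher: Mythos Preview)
Your forward direction \((1)\Rightarrow(2)\) is essentially the paper's argument: both proceed by induction on \(\alpha\), use that \(\Ult_E(\mathrm{V}\mid\delta)\) is an initial segment of \(\mathrm{M}=\Ult_E(\mathrm{V})\), and apply the inductive hypothesis inside \(\mathrm{M}\) to push \(<\alpha\)-hyperstrength down to that segment. One small correction: the height of \(\Ult_E(\mathrm{V}\mid\delta)\) is \(\gamma=\sup_{\beta<\delta}j_E(\beta)\), not \(j_E(\delta)\), since \(j_E\) need not be continuous at \(\delta\); the paper is explicit about using \(\gamma\).

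For the backward direction \((2)\Rightarrow(1)\), however, you make life much harder than necessary. The paper dispatches this direction in one line, \emph{before} the induction: hyperstrength is defined by transfinite recursion, so ``\(\kappa\) is \(<\alpha\)-hyperstrong'' is a single first-order formula \(H(\kappa,\alpha)\); by the L\'evy--Montague reflection schema there are arbitrarily large \(\delta\) with \(H\) absolute between \(\mathrm{V}\mid\delta\) and \(\mathrm{V}\), so a failure of \(H\) in \(\mathrm{V}\) reflects to some \(\mathrm{V}\mid\delta\) above the threshold, contradicting (2). No extenders, no promotion from \(\mathrm{M}\mid j_E(\delta)\) to \(\mathrm{M}\), and no complexity bookkeeping are needed. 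Your route eventually invokes the same tool---L\'evy reflection via a \(\Sigma_n\)-correct \(\delta\)---but applied to the more complicated formula ``\(\Ult_E(\mathrm{V})\vDash H\)'' rather than to \(H\) directly. Moreover, your ``extra clause'' that the complexity of \(H\) stays at a fixed level \(\Sigma_n\) is true, but for the trivial reason that \(H\) is a single formula to begin with; the inductive justification you offer (``the recursion preserves the level because each witnessing extender has bounded rank'') is not the right reason, since the subtle part is the class-sized clause \(\Ult_E(\mathrm{V})\vDash(\cdot)\), whose complexity is governed by \L o\'s's theorem rather than by the rank of \(E\).
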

	\nothing{
		\begin{proof}
			(2) always implies (1) since any failure of hyperstrength is reflected to \(\ffrak{V}\mid \delta \) for arbitrarily large \(\delta \).  So we assume (1) and aim to show (2) by induction. For \(\alpha =1\)---i.e.\ \(0\)-hyperstrength or just being strong---this follows by the absoluteness of strength of extenders between levels of the cumulative hierarchy.  All extenders witnessing the \(\lambda \)-strength of \(\kappa \) for \(\lambda <\delta \) are in \(\mathrm{V}\mid {\delta }\).  So \(\kappa \) is strong implies \(\ffrak{V}\mid \delta \) has such extenders for sufficiently large \(\delta \).
			
			For limit \(\alpha \), this follows inductively: for each \(\xi <\alpha \), \(\kappa \) is \(<\xi +1\)-hyperstrong iff \(\ffrak{V}\mid \delta \vDash \) \dblq{\(\kappa \) is \(<\xi +1\)-hyperstrong} for all \(\delta >|\kappa +\xi |^+\).  Hence \(\kappa \) is \(<\alpha \)-hyperstrong implies \(\ffrak{V}\mid \delta \) satisfies this for \(\delta \) greater than \(\sup_{\xi <\alpha }|\kappa +\xi |^+\le |\kappa +\alpha |^+\).
			
			For successor \(\alpha \), assume the results for \(<\alpha \)-hyperstrength.  In fact, we take the inductive hypothesis as this holding for any transitive model of \(\axm{ZFC}\) containing \(|\kappa +\alpha |^+\).  But without loss of generality, and for the sake of notation, take our model as \(\ffrak{V}\).  As notation, for \(E\) an extender, let \(j_E:\mathrm{V}\rightarrow \Ult_E(\mathrm{V})\) be the canonical embedding.
			
			Suppose \(\kappa \) is \(\alpha \)-hyperstrong and let \(\delta \ge|\kappa +\alpha |^+\) be arbitrary.  Inductively, \(\ffrak{V}\mid \delta \vDash \dblq{\kappa \text{ is }<\alpha \text{-hyperstrong}}\). Let \(\lambda <\delta \) be arbitrary.  In \(\ffrak{V}\), there's a \(\lambda \)-strong extender \(E\) on \(\kappa \) such that \(\ffrak{Ult}_E(\ffrak{V})\vDash \dblq{\kappa \text{ is }<\alpha \text{-hyperstrong}}\).  Inductively, \(\ffrak{Ult}_E(\ffrak{V})\mid\gamma \vDash \dblq{\kappa \text{ is }<\alpha \text{-hyperstrong}}\) for \(\gamma =\sup_{\beta <\delta }j_E(\beta )\) because \(\gamma \ge \delta \ge|\kappa +\alpha |^+\ge(|\kappa +\alpha |^{+})^{\ffrak{Ult}_E(\ffrak{V})}\) and the inductive hypothesis holds.  But \(E\in \mathrm{V}\mid \delta \) and taking the ultrapower there yields \(\ffrak{Ult}_E^{\ffrak{V}\mid \delta }(\ffrak{V}\mid \delta )=\ffrak{Ult}_E(\ffrak{V})\mid\gamma \).  Hence \(E\) witnesses the \(\alpha \)-hyperstrength of \(\kappa \) in \(\ffrak{V}\mid \delta \).\qedhere
		\end{proof}

		We also make use of the following lemma.
	}	
	\begin{lemma}\label{srsishypercl}
		Let \(\kappa \) be strong such that \(\kappa ^{+\P}\) exists.  Therefore,
		\begin{enumerate}
			\item If \(\kappa \) is \(<\kappa ^{+\P}\)-hyperstrong, then \(\kappa \) is hyperstrong.
			\item If \(\ffrak{V}\mid {\kappa ^{+\P}}\vDash \dblq{\kappa \text{ is hyperstrong}}\), then \(\kappa \) is \(<\kappa ^{+\P}\)-hyperstrong and hence hyperstrong by (1).
		\end{enumerate}
	\end{lemma}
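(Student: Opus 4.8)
The plan is to route both parts through a single \textbf{propagation principle}: if \(\kappa \) and \(\tau >\kappa \) are both strong and \(\ffrak{V}\mid \tau \vDash \dblq{\kappa \text{ is hyperstrong}}\), then \(\kappa \) is \(<\tau \)-hyperstrong in \(\ffrak{V}\). Writing \(\rho =\kappa ^{+\P}\), part (2) is then exactly the instance \(\tau =\rho \) (followed by the cited implication~(1)), so the genuine content is this principle together with the deduction of~(1). Throughout I would lean on \hlink{strhypstr}, which localizes \(<\beta \)-hyperstrength to the behaviour of the levels \(\ffrak{V}\mid \delta \).

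To prove the principle, fix \(\beta <\tau \); I would show \(\kappa \) is \(<\beta +1\)-hyperstrong by checking, via \hlink{strhypstr}, that \(\ffrak{V}\mid \delta \vDash \dblq{\kappa \text{ is }<\beta +1\text{-hyperstrong}}\) for all limit \(\delta \ge|\kappa +\beta +1|^+\). For \(\delta <\tau \) this is immediate: \(\ffrak{V}\mid \tau \) is a model of \(\axm{ZFC}\) satisfying \(\dblq{\kappa \text{ is }<\beta +1\text{-hyperstrong}}\) (as \(\beta +1<\tau \)), and \hlink{strhypstr} applied inside it pushes this down to all its levels \(\ge|\kappa +\beta +1|^+\). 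For \(\delta \ge\tau \) I would use that \(\tau \) is strong: take a \(\delta \)-strong \(i:\mathrm{V}\to \mathrm{N}\) with \(\cp(i)=\tau \), \(\ffrak{N}\mid \delta =\ffrak{V}\mid \delta \), and \(i(\tau )>\delta \). Since \(\kappa <\tau =\cp(i)\), the embedding fixes \(\kappa \), so \(\dblq{\ffrak{V}\mid \tau \vDash \kappa \text{ is hyperstrong}}\) maps to \(\dblq{\ffrak{N}\mid i(\tau )\vDash \kappa \text{ is hyperstrong}}\); as \(i(\tau )>\delta \), applying \hlink{strhypstr} inside \(\ffrak{N}\mid i(\tau )\) and using \(\ffrak{N}\mid \delta =\ffrak{V}\mid \delta \) yields \(\ffrak{V}\mid \delta \vDash \dblq{\kappa \text{ is }<\beta +1\text{-hyperstrong}}\). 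Ranging over \(\delta \) and over \(\beta <\tau \) gives \(<\tau \)-hyperstrength.

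For~(1), I would first note that \(<\rho \)-hyperstrength forces \(\ffrak{V}\mid \rho \vDash \dblq{\kappa \text{ is hyperstrong}}\): for each \(\beta <\rho \), \hlink{strhypstr} applied at the single level \(\delta =\rho \) (legitimate since \(\rho \) is inaccessible and \(\ge|\kappa +\beta +1|^+\)) gives \(\ffrak{V}\mid \rho \vDash \dblq{\kappa \text{ is }\beta \text{-hyperstrong}}\), and \(\rho \) exhausts the ordinals of that level. Now fix an arbitrary ordinal \(\alpha \); by \hlink{strhypstr} it suffices to obtain \(\ffrak{V}\mid \delta _0\vDash \dblq{\kappa \text{ is }<\alpha +1\text{-hyperstrong}}\) for cofinally many \(\delta _0\). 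Given \(\delta _0\), I would reflect \emph{upward} through \(\rho \)'s strength: choose a \(\theta \)-strong \(i:\mathrm{V}\to \mathrm{N}\) with \(\cp(i)=\rho \) and \(\theta >\max(\delta _0,\alpha )\). Then \(\cp(i)>\kappa \) keeps \(\kappa \) strong in \(\ffrak{N}\), sends \(\dblq{\ffrak{V}\mid \rho \vDash \kappa \text{ hyperstrong}}\) to \(\dblq{\ffrak{N}\mid i(\rho )\vDash \kappa \text{ hyperstrong}}\), and makes \(i(\rho )\) a strong cardinal of \(\ffrak{N}\) above \(\alpha \). Applying the propagation principle \emph{inside} \(\ffrak{N}\) with \(\tau =i(\rho )\) gives \(\ffrak{N}\vDash \dblq{\kappa \text{ is }<i(\rho )\text{-hyperstrong}}\), hence \(\ffrak{N}\vDash \dblq{\kappa \text{ is }<\alpha +1\text{-hyperstrong}}\); pulling this back to the shared level \(\ffrak{N}\mid \delta _0=\ffrak{V}\mid \delta _0\) via \hlink{strhypstr} closes the step, and varying \(\delta _0\) finishes.

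I expect the main obstacle to be the directional bookkeeping in these reflections. Strength embeddings supply high agreement \(\ffrak{N}\mid \delta =\ffrak{V}\mid \delta \) but only up to a \emph{bounded} height, so no single ultrapower of \(\kappa \) itself can witness all the levels that \hlink{strhypstr} demands. The maneuver that rescues this is to reflect through the strong cardinal \(\rho \) (resp.\ \(\tau \)) rather than through \(\kappa \): because its critical point lies \emph{above} \(\kappa \), every fact about \(\kappa \) transports verbatim while the ``\(\kappa \) is hyperstrong'' level is carried past the target ordinal inside \(\ffrak{N}\), and letting \(\theta \) (equivalently \(\delta _0\)) grow recovers \emph{cofinally many} good levels in \(\ffrak{V}\)—precisely the hypothesis \hlink{strhypstr} needs. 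The remaining points are routine: \(\rho \) is inaccessible, so \(\ffrak{V}\mid \rho \vDash \axm{ZFC}\) and the thresholds \(|\kappa +\beta +1|^+\) stay below it, and \hlink{strhypstr} is available in the auxiliary models \(\ffrak{N}\) and \(\ffrak{N}\mid i(\tau )\) since these are transitive models of \(\axm{ZFC}\).
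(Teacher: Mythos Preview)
Your proof is correct, but you take a genuinely different route from the paper. The paper argues both parts by contrapositive using \hlink{Σ2ref}: for (1), a putative failure of \(\gamma\)-hyperstrength is witnessed in some \(\ffrak{V}\mid\delta\), and \(\Sigma_2\)-reflection at the strong cardinal \(\rho=\kappa^{+\P}\) pulls this witness down to some \(\delta<\rho\), whence \hlink{strhypstr} contradicts \(<\rho\)-hyperstrength; for (2), a putative failure of \(\alpha\)-hyperstrength for \(\alpha<\rho\) is similarly reflected to a level \(\delta_0<\rho\), contradicting \hlink{strhypstr} applied \emph{inside} \(\ffrak{V}\mid\rho\). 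You instead argue directly, taking strength embeddings with critical point \(\rho\) (or \(\tau\)) to transport the statement \(\ffrak{V}\mid\rho\vDash\dblq{\kappa\text{ hyperstrong}}\) upward to \(\ffrak{N}\mid i(\rho)\), then reading off the desired level-by-level hyperstrength via \hlink{strhypstr} and the agreement \(\ffrak{N}\mid\delta=\ffrak{V}\mid\delta\). The two approaches are dual: the paper reflects failure \emph{down} through \(\rho\), you push success \emph{up} through \(\rho\). The paper's argument is considerably shorter and avoids the auxiliary propagation principle; your approach has the virtue of isolating that principle as a standalone fact, and of making explicit why the critical point sitting \emph{above} \(\kappa\) is what makes the reflection work.

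One minor remark: where you write that it suffices to obtain the conclusion ``for cofinally many \(\delta_0\)'', note that \hlink{strhypstr} as stated asks for \emph{all} limit \(\delta\ge|\kappa+\alpha+1|^+\). Your argument in fact delivers this (for any such \(\delta_0\) you can choose \(\theta>\delta_0\)), so this is only a looseness of phrasing; alternatively, cofinally many levels do suffice since a failure of hyperstrength is itself witnessed cofinally, but that is a separate observation not literally contained in \hlink{strhypstr}.
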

	\begin{proof}
		\begin{enumerate}
			\item Suppose \(\kappa \) is not hyperstrong.  By reflection, \[\ffrak{V}\mid \delta \vDash \dblq{\exists \gamma \ (\kappa \text{ is not }\gamma \text{-hyperstrong})\wedge \text{there is no largest cardinal}}\tag{$*$}\]
			for some \(\delta >\alpha \).  By \hlink{Σ2ref}, we get arbitrarily large \(\delta <{\kappa ^{+\P}}\) such that ($*$) holds.  Since there is no largest cardinal in \(\ffrak{V}\mid \delta \), such \(\delta \) satisfy \(\delta >|\kappa +\gamma |^+\) for any \(\gamma \) as in (\(*\)).  \hlink{strhypstr} therefore implies \(\kappa \) is not \(\gamma \)-hyperstrong for some \(\gamma <{\kappa ^{+\P}}\), a contradiction.
			\item Suppose \(\kappa \) is not \(\alpha \)-hyperstrong for some \(\alpha <{\kappa ^{+\P}}\).  This fact is reflected to some level \(\ffrak{V}\mid \delta \), and by \hlink{Σ2ref}, to arbitrarily large initial segments \(\ffrak{V}\mid \delta \) for \(\delta <{\kappa ^{+\P}}\).  In particular, we get some \(\delta _0\) with \(|\kappa +\alpha |^+<\delta _0<{\kappa ^{+\P}}\) such that \(\ffrak{V}\mid {\delta _0}\vDash \) \dblq{\(\kappa \) is not \(\alpha \)-hyperstrong}.  But applying \hlink{strhypstr} in \(\ffrak{V}\mid {{\kappa ^{+\P}}}\vDash \axm{ZFC}\), by the hypothesis, \((\ffrak{V}\mid {{\kappa ^{+\P}}})\mid\delta =\ffrak{V}\mid \delta \vDash \dblq{\kappa \text{ is }\alpha \text{-hyperstrong}}\) for \emph{all} \(\delta >|\kappa +\alpha |^+\), a contradiction with the hypothesis on \(\delta _0\).  Hence \(\kappa \) must be \(\alpha \)-hyperstrong for each \(\alpha <{\kappa ^{+\P}}\), and so hyperstrong by (1).\qedhere
		\end{enumerate}
	\end{proof}
	
	This lemma allows us to show that the existence of an srs (with a strong above it) is strictly stronger than a proper class of hyperstrongs.
	
	\begin{result}\label{srsishyper}
		Let \(\kappa \) be srs such that \(\kappa ^{+\P}\) exists.  Therefore \(\kappa \) is hyperstrong.  In fact, any single \({\kappa ^{+\P}}\)-srs embedding witnesses all degrees of hyperstrength of \(\kappa \).  Moreover, in \(\ffrak{V}\mid \kappa \), there is a proper class of hyperstrongs.
	\end{result}
	\begin{proof}
		Let \(\kappa \) be srs.  \(\kappa \) is already \(0\)-hyperstrong.  There is then a \({\kappa ^{+\P}}+1\)-srs embedding \(j:\mathrm{V}\rightarrow \mathrm{M}\) with \(\cp(j)=\kappa \).  We proceed by induction on \(\alpha \) to show \(\kappa \) is \(<\alpha \)-hyperstrong in \(\ffrak{V}\) and \(\ffrak{M}\), with the base case of \(\alpha =1\) already true.  The limit case is trivial, so we consider only the successor case: showing \(\alpha \)-hyperstrength in \(\ffrak{V}\) and \(\ffrak{M}\). By \hlink{srsishypercl} (1), we may assume \(\alpha <{\kappa ^{+\P}}\).
		
		Suppose \(\kappa \) is \(<\alpha \)-hyperstrong in \(\ffrak{V}\) and \(\ffrak{M}\) so that \(\kappa \) is \(\alpha \)-hyperstrong in \(\ffrak{V}\). If there is some \(\lambda \) with no \(\lambda \)-strong extender in \(\ffrak{M}\) witnessing \(\alpha \)-hyperstrength of \(\kappa \), then by reflection, some \(\ffrak{M}\mid\delta \) (correctly) satisfies there's no such extender.  By \hlink{Σ2ref}, we can assume \(\delta ,\lambda <{\kappa ^{+\P}}\) so that \(\ffrak{M}\mid\delta =\ffrak{V}\mid\delta \) also has no such extender.  \hlink{strhypstr} tells us \(\ffrak{V}\) must satisfy \(\kappa \) isn't \(\alpha \)-hyperstrong, a contradiction.
		
		We also can show that \(\kappa \) is the limit of hyperstrongs, giving \(\ffrak{V}\mid \kappa \) as a model of a proper class of hyperstrongs.  Suppose not: \(\kappa \) is the least hyperstrong above some \(\lambda \).  By elementarity, \(j(\kappa )\) is still the least hyperstrong above \(j(\lambda )=\lambda \) in \(\ffrak{M}\) but \(\ffrak{V}\mid{{\kappa ^{+\P}}}={\ffrak{M}}\mid{\kappa ^{+\P}}\vDash \dblq{\kappa \text{ is hyperstrong}}\).  So by \hlink{srsishypercl} (2), \(\kappa \) is hyperstrong in \(\ffrak{M}\).  But \(\lambda <\kappa <j(\kappa )\) contradicts that \(j(\kappa )\) is the least hyperstrong above \(\lambda \) in \(\ffrak{M}\).\qedhere
	\end{proof}

	\section{Strongs Reflecting Strongs in the Core Model}
	
	We begin with proving the \dblq{easy} direction of \hlink{maingoal} through core model techniques.  We show (2) implies (1), meaning that \(\axm{UWISS}\) with a proper class of strongs results in a proper class of srs cardinals.  Firstly, note that a Woodin cardinal implies the existence of a proper class of srs cardinals by an easy proof.
	
	\begin{lemma}\label{woodinpcsrs}
		The existence of a Woodin cardinal implies \(\Con(\axm{ZFC}\mathrel{+}\) \dblq{There is a proper class of srs cardinals}\()\).
	\end{lemma}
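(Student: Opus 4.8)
The plan is to work inside the core model or, more directly, to exploit the reflection properties of a Woodin cardinal $\delta$ to produce a proper class of srs cardinals in some inner model, thereby giving the desired consistency. Recall that a Woodin cardinal $\delta$ has the defining feature that for every function $f:\delta\to\delta$ there are arbitrarily large $\kappa<\delta$ that are $<\delta$-strong ``past'' $f$, witnessed by extenders $E$ with critical point $\kappa$ whose images reflect the relevant structure below $j_E(f)(\kappa)$. The key observation is that srs is fundamentally a reflection-of-strongs property, which is exactly the kind of property that Woodinness is designed to deliver: a Woodin cardinal reflects the class of strong cardinals, so one expects the $<\delta$-strong cardinals below a Woodin to inherit srs-like reflection for free.

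\medskip

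First I would set up the right function to feed into the Woodin-ness of $\delta$. Since being srs at $\kappa$ requires, for each $\lambda$, a $\lambda$-strong $(\kappa,\lambda)$-extender $E$ whose ultrapower $\ffrak{M}=\Ult_E(\ffrak{V})$ agrees with $\ffrak{V}$ on which $\xi<\lambda$ are strong, I would define $f:\delta\to\delta$ to bound, for each $\alpha<\delta$, the amount of strength needed to read off the pattern of strong cardinals below $\alpha$ (for instance, letting $f(\alpha)$ exceed the rank of witnesses to strength or non-strength of each $\xi<\alpha$, using \hlink{Σ2ref} to localize the verification of strength below the next strong). Then Woodin-ness hands back arbitrarily large $\kappa<\delta$ and extenders $E$ witnessing enough strength past $f$ so that the agreement $\ffrak{V}\mid\lambda=\ffrak{M}\mid\lambda$ is rich enough to capture all strong and non-strong cardinals below $\lambda$. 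The point is that if $\ffrak{V}$ and $\ffrak{M}$ agree up to a sufficiently high rank above $\lambda$, and $f$ was chosen so that strength below $\lambda$ is decided by that initial segment, then $\{\xi<\lambda:\xi\text{ is strong}\}=\{\xi<\lambda:\ffrak{M}\vDash``\xi\text{ is strong}"\}$ follows, which is precisely the srs condition.

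\medskip

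Having produced one srs cardinal $\kappa<\delta$ below $f$-arbitrarily-high, I would then obtain a \emph{proper class} of them inside an appropriate model: cutting the universe off at $\delta$, i.e.\ passing to $\ffrak{V}\mid\delta$, turns ``arbitrarily large srs cardinals below $\delta$'' into ``a proper class of srs cardinals,'' and $\ffrak{V}\mid\delta\vDash\axm{ZFC}$ since $\delta$ is inaccessible (being Woodin). This yields $\Con(\axm{ZFC}+$``there is a proper class of srs cardinals''$)$ as required. The main obstacle I anticipate is the bookkeeping in the first step: ensuring that the extender $E$ supplied by Woodin-ness simultaneously (a) is genuinely $\lambda$-strong with $\lambda$ as large as desired, and (b) has its target model agree with $\ffrak{V}$ on strength all the way up to $\lambda$ — these two demands interact, because deciding strength of some $\xi$ just below $\lambda$ may itself require agreement well above $\lambda$. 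The trick will be to choose $f$ growing fast enough that the Woodin extender automatically overshoots the agreement threshold needed to pin down strength below $\lambda$, invoking $\Sigma_2$-reflection (\hlink{Σ2ref}) to keep the verification of strength/non-strength local and hence within reach of the extender's agreement.
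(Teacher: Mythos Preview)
Your approach is correct in outline and reaches the same conclusion as the paper, but it is more indirect. The paper does not work through the function characterization of Woodinness at all: it simply sets \(A=\{\xi<\delta:\xi\text{ is strong}\}\) and invokes the equivalent \(A\)-strong formulation of Woodinness (as in \cite{Kanamori}), which hands back an unbounded set of \(\kappa<\delta\) that are \(\lambda\)-strong reflecting \(A\) for every \(\lambda<\delta\). Since \(j_E(A)\cap\lambda=A\cap\lambda\) is literally the srs condition, such \(\kappa\) are \(<\delta\)-srs, and passing to \(\ffrak{V}\mid\delta\) finishes immediately. Your plan via a carefully chosen \(f:\delta\to\delta\) is essentially re-proving, for this particular \(A\), the equivalence between the function and \(A\)-strong formulations of Woodinness; the \dblq{bookkeeping obstacle} you flag (that verifying strength of \(\xi\) just below \(\lambda\) might require agreement above \(\lambda\)) is exactly what the \(A\)-strong formulation packages away. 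What your route buys is self-containment if one only has the \(f\)-definition available; what the paper's route buys is a two-line proof once the standard equivalence is cited.
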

	\begin{proof}
		Let \(\delta \) be Woodin and \(A=\{\kappa <\delta :\kappa \text{ is strong}\}\).  By Woodin-ness and \cite{Kanamori}, there is an unbounded (in fact, stationary) set
		\[\{\kappa <\delta :\kappa \text{ is }\lambda \text{-strong reflecting }A\text{ for all }\lambda <\delta \}\subseteq \mathrm{V}\mid \delta \text{.}\]
		But this just means we get \(\lambda \)-strong extenders \(E_\lambda \in \mathrm{V}\mid \delta \) for \(\lambda <\delta \) on each \(\kappa \) in this set such that the resulting embedding \(j_\lambda :\mathrm{V}\rightarrow \mathrm{M}_\lambda \) has \(j_\lambda (A)\cap \mathrm{V}\mid \lambda =A\cap \mathrm{V}\mid \lambda \), i.e.\ \(\{\alpha <\lambda :\ffrak{M}\vDash \dblq{\alpha \text{ is strong}}\}=\{\alpha <\lambda :\alpha \text{ is strong}\}\).  Hence \(\ffrak{V}\mid \delta \) witnesses the consistency statement.\qedhere
	\end{proof}
	
	Hence we may assume without loss of generality that there is no inner model with a Woodin and thus may work with the core model \(\ffrak{K}\) below a Woodin as presented in \cite{Steelfine} and \cite{Steelcore}.  There are a few standard facts about \(\ffrak{K}\) that we will use.  
	
	\begin{lemma}\label{coreresults}
		Suppose there is no inner model with a Woodin cardinal.  Therefore the core model \(\ffrak{K}\) is such that
		\begin{enumerate}
			\item (Local definability) For every regular \(\kappa >\aleph _1\), \(\ffrak{K}^{\ffrak{H}_\kappa }=\ffrak{K}\cap \ffrak{H}_\kappa \).
			\item (Generic absoluteness) For every poset \(\bbb{P}\in \mathrm{V}\), and every \(\bbb{P}\)-generic \(G\) over \(\mathrm{V}\), \(\ffrak{K}^{\ffrak{V}}=\ffrak{K}^{\ffrak{V}[G]}\).
			\item (Initial segment condition) If \(E\) is an extender on the sequence of \(\mathrm{K}\), then for every \(\alpha <\lh(E)\), \(E\upharpoonright\alpha \in \mathrm{K}\) is on the sequence of \(\mathrm{K}\).
		\end{enumerate}
	\end{lemma}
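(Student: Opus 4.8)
The plan is to recognize all three clauses as standard structural properties of the Steel core model \(\ffrak{K}\) below a Woodin cardinal, and so to deduce them from the theory developed in \cite{Steelfine} and \cite{Steelcore} rather than to argue anything from scratch. The standing hypothesis—no inner model with a Woodin—is precisely the anti-large-cardinal assumption under which that theory builds \(\ffrak{K}\) as an iterable Mitchell\textendash Steel premouse via the \(\ffrak{K}^c\)-construction using background-certified extenders. So my task reduces to identifying, for each clause, the matching result in the references and checking that the hypotheses align.

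For (1) and (2), which I would treat together, the governing fact is that \(\ffrak{K}\) carries a definition that is computed correctly inside any sufficiently closed inner model and is invariant under set forcing. Local definability follows from the absoluteness of the \(\ffrak{K}^c\)-construction together with the fact that the iterability needed to recognize initial segments of \(\ffrak{K}\) is itself sufficiently absolute between \(\ffrak{V}\) and \(\ffrak{H}_\kappa\) whenever \(\kappa >\aleph _1\) is regular; this yields \(\ffrak{K}^{\ffrak{H}_\kappa}=\ffrak{K}\cap \ffrak{H}_\kappa\). Generic absoluteness \(\ffrak{K}^{\ffrak{V}}=\ffrak{K}^{\ffrak{V}[G]}\) then follows from the rigidity and maximality of \(\ffrak{K}\) together with weak covering: set forcing can neither add nor delete extenders from the canonical sequence, so the \(\ffrak{K}^c\)-construction returns the same model in \(\ffrak{V}\) and in \(\ffrak{V}[G]\).

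Item (3) is purely fine-structural, and I would simply read it off the definition of a premouse. The Mitchell\textendash Steel indexing requires every extender sequence to satisfy the initial segment condition: for \(E\) on the sequence of \(\mathrm{K}\) and any \(\alpha <\lh(E)\), the trivial completion of \(E\upharpoonright \alpha \) again lies on the sequence. Since \(\ffrak{K}\) is by construction such a premouse, its sequence inherits this condition directly, giving \(E\upharpoonright \alpha \in \mathrm{K}\) on the \(\mathrm{K}\)-sequence.

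The principal obstacle is not a genuine new argument—there is none to supply—but the care needed to confirm that the apparatus below a Woodin delivers these facts in the stated \emph{global} form. The delicate point is that \(\ffrak{K}\) exists as a proper-class model under the bare hypothesis (rather than only up to some thick cutpoint, or only relative to a measurable above), and that local definability holds uniformly at every regular \(\kappa >\aleph _1\); this is exactly where I would anchor the citations to the precise theorems of \cite{Steelfine} and \cite{Steelcore}.
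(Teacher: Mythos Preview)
Your proposal is correct and matches the paper's treatment: the paper states this lemma as a collection of standard facts about the Steel core model below a Woodin, citing \cite{Steelfine} and \cite{Steelcore}, and gives no proof whatsoever. Your recognition that these three clauses are to be quoted from the literature rather than argued anew is exactly the intended reading.
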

	
	When working with \(\ffrak{K}\), it will be useful to know when an extender is on the sequence of \(\ffrak{K}\), so we make use of the following lemma.
	
	\begin{lemma}\label{calcKseq}
		Let \(E\) be a \((\kappa ,\lambda )\)-extender with strength \(\lambda =\kappa +\delta \) such that \((\kappa ^{+\delta })^{\ffrak{V}}\) is regular.  Therefore \(F=E\cap \mathrm{K}\in \mathrm{K}\) is on the sequence of \(\ffrak{K}\) and in fact, \(\ffrak{K}\vDash \) \dblq{\(F\) is a \((\kappa ,\lambda )\)-extender with \(\mathrm{str}(F)=\lambda \)}.  Moreover, for each regular \(\mu <(\kappa ^{+\delta })^{\ffrak{V}}\) in \(\ffrak{K}\), there is an extender \(F_\mu \) such that \(\ffrak{Ult}_{F_\mu }(\ffrak{K})\) and \(\ffrak{K}\) agree on \(\mathrm{H}_{\mu }\).
	\end{lemma}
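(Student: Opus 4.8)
The plan is to realise $F=E\cap\mathrm{K}$ as the first (critical) extender of an iteration of $\ffrak{K}$, and then to read off its coherence and exact strength from the agreement between $\ffrak{V}$ and $\ffrak{M}=\Ult_E(\ffrak{V})$. Throughout write $\lambda=(\kappa^{+\delta})^{\ffrak{V}}$ for the strength of $E$, which is regular by hypothesis. First I would form the ultrapower embedding $j=j_E:\mathrm{V}\to\mathrm{M}$, so that $\cp(j)=\kappa$, $j(\kappa)>\lambda$, and---since $E$ has strength $\lambda$---$\ffrak{V}\mid\lambda=\ffrak{M}\mid\lambda$; in particular $\ffrak{V}$ and $\ffrak{M}$ agree on $\mathrm{H}_\mu$ for every $\mu\le\lambda$. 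As $\ffrak{K}$ is given by a formula that is absolute between the relevant transitive models, elementarity yields $j(\ffrak{K})=\ffrak{K}^{\ffrak{M}}$, so $j\upharpoonright\mathrm{K}:\ffrak{K}\to\ffrak{K}^{\ffrak{M}}$ is elementary.

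Next I would use local definability (\hlink{coreresults}(1)) to compare the two core models: since $\ffrak{V}$ and $\ffrak{M}$ agree below $\lambda$, the levels $\mathrm{H}_\mu$ coincide for $\mu\le\lambda$, and $\ffrak{K}$ being computed locally from $\mathrm{H}_\mu$ gives $\ffrak{K}^{\ffrak{V}}\mid\mu=\ffrak{K}^{\ffrak{M}}\mid\mu$ for every regular $\mu\le\lambda$. The heart of the proof is then the standard fact from the theory of $\ffrak{K}$ below a Woodin that an elementary map of $\ffrak{K}$ into $\ffrak{K}^{\ffrak{M}}$ induced by a $\ffrak{V}$-extender is an iteration map, whose first extender is precisely the trace $F=E\cap\mathrm{K}$. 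This puts $F$ on the sequence of $\ffrak{K}$, and $F\in\ffrak{K}$ follows since $\lh(F)=\lambda<\mathrm{Ord}^{\ffrak{K}}$. To see $\str(F)=\lambda$ in $\ffrak{K}$, factor $j\upharpoonright\mathrm{K}$ as $k\circ i_F$, where $i_F:\ffrak{K}\to\ffrak{W}=\Ult_F(\ffrak{K})$ and $k:\ffrak{W}\to\ffrak{K}^{\ffrak{M}}$; as $F$ has length $\lambda$ we get $\cp(k)\ge\lambda$, whence $\ffrak{W}\mid\lambda=\ffrak{K}^{\ffrak{M}}\mid\lambda=\ffrak{K}^{\ffrak{V}}\mid\lambda$, i.e.\ $\str(F)\ge\lambda$. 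The regularity of $\lambda=(\kappa^{+\delta})^{\ffrak{V}}$ is exactly what forces the agreement to stop at $\lambda$, giving $\str(F)=\lambda$ and hence $\ffrak{K}\vDash\dblq{F\text{ is a }(\kappa,\lambda)\text{-extender with }\str(F)=\lambda}$.

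For the ``moreover'' clause I would appeal to the initial segment condition, \hlink{coreresults}(3). Given a regular $\mu<(\kappa^{+\delta})^{\ffrak{V}}$ that is a cardinal of $\ffrak{K}$, the restriction $F_\mu=F\upharpoonright\mu$ is on the sequence of $\ffrak{K}$ and lies in $\ffrak{K}$; and since $\str(F)=\lambda\ge\mu$, truncating to $\mu$ preserves agreement up to $\mu$, so $\Ult_{F_\mu}(\ffrak{K})$ and $\ffrak{K}$ agree on $\mathrm{H}_\mu$, as desired.

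The hard part will be the middle step: proving that $j\upharpoonright\mathrm{K}$ is an iteration map with first extender $F$. This is the genuinely inner-model-theoretic input, resting on the iterability and rigidity of $\ffrak{K}$ below a Woodin together with the local definability and generic absoluteness of \hlink{coreresults}; by contrast the agreement computations and the application of the initial segment condition are routine bookkeeping, with the regularity hypothesis on $(\kappa^{+\delta})^{\ffrak{V}}$ serving only to pin the strength of $F$ to exactly $\lambda$.
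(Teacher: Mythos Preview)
Your overall architecture matches the paper's: invoke Schindler's theorem that \(j\upharpoonright\mathrm{K}\) is the iteration map of a normal tree \(\mathcal{T}\) on \(\ffrak{K}\) with last model \(\ffrak{K}^{\ffrak{M}}\), use local definability to compare \(\ffrak{K}\) and \(\ffrak{K}^{\ffrak{M}}\), and finish with the initial segment condition. There is, however, a genuine gap at the step you flag as the hard part.

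You assert that the first extender of the iteration is \emph{precisely} \(F=E\cap\mathrm{K}\), and deduce from this alone that \(F\) sits on the \(\ffrak{K}\)-sequence. Schindler's theorem does not give this. What it gives is a normal tree whose first extender \(E_\xi\) lies on the sequence of \(\ffrak{K}\), with no a priori bound on \(\lh(E_\xi)\). The paper argues in two steps. First, the agreement \(\ffrak{K}\cap\mathrm{H}_{\kappa^{+\delta}}=\ffrak{K}^{\ffrak{M}}\cap\mathrm{H}_{\kappa^{+\delta}}\) (from local definability and \(\ffrak{V}\mid\lambda=\ffrak{M}\mid\lambda\)) forces \(\lh(E_\xi)\ge\lambda\): if \(E_\xi\) were shorter, applying it would create a disagreement at index \(\lh(E_\xi)\) that persists to \(\ffrak{K}^{\ffrak{M}}\) by normality of \(\mathcal{T}\). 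Second, since the tail map \(i_{\xi+1,\gamma}\) has critical point above \(\lh(E_\xi)\ge\lambda\), one computes directly that for \(\langle a,A\rangle\in[\lambda]^{<\omega}\times\mathcal{P}([\kappa]^{|a|})^{\ffrak{K}}\), membership in \(F\) and in \(E_\xi\) coincide; hence \(E_\xi\upharpoonright\lambda=F\). Only now does the initial segment condition apply, to \(E_\xi\), yielding \(F\) on the sequence of \(\ffrak{K}\). So the initial segment condition is needed for \(F\) itself, not merely for the \(F_\mu\). Your factorisation \(j\upharpoonright\mathrm{K}=k\circ i_F\) is fine for reading off \(\str(F)\ge\lambda\) once \(F\) is known to be a \(\ffrak{K}\)-extender, but it does not substitute for the argument above.

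A smaller point: you write \(\lambda=(\kappa^{+\delta})^{\ffrak{V}}\), but in the statement \(\lambda=\kappa+\delta\) is ordinal addition, while \((\kappa^{+\delta})^{\ffrak{V}}\) is the iterated cardinal successor; these are distinct, and the passage from \(\ffrak{V}\mid\lambda=\ffrak{M}\mid\lambda\) to \(\mathrm{H}_{\kappa^{+\delta}}^{\ffrak{V}}=\mathrm{H}_{\kappa^{+\delta}}^{\ffrak{M}}\) is where the regularity hypothesis on \((\kappa^{+\delta})^{\ffrak{V}}\) is actually used (so that local definability applies at that level).
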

	\begin{proof}
		Let \(\mathrm{M}=\Ult_E(\mathrm{V})\) so that \({}^\omega  M\subseteq M\). Let \(j:\mathrm{V}\rightarrow \mathrm{M}\) be the canonical ultrapower map.  The hypotheses of \cite{Schindler} are satisfied and hence \(\ffrak{K}^{\ffrak{M}}\) (i.e.\ \(j(\ffrak{K})\)) is an iterate of \(\ffrak{K}\).  Moreover \(j\upharpoonright \mathrm{K}=\pi ^{\mathcal{T}}\) for some normal iteration tree \(\mathcal{T}\) on \(\ffrak{K}\) of successor length where \(\pi ^\mathcal{T}\) is the iteration map for the main branch of \(\mathcal{T}\), and \(\mathcal{T}\) has last model \(\ffrak{K}^{\ffrak{M}}\).  More explicitly, let \(\lh(\mathcal{T})=\gamma +1\).  Let \(\xi \) be such that the \(\mathcal{T}\)-predecessor of \(\xi +1\) is \(0\) and \(\xi +1\) lies on the branch of \(\gamma \).  We have models \(\langle \mathcal{M}_\alpha ^\mathcal{T}:\alpha <\lh(\mathcal{T})\rangle \) with extender sequence \(\langle E_\alpha :\alpha <\lh(\mathcal{T})-1\rangle \), iteration maps \(i_{\alpha ,\beta }:\mathcal{M}_\alpha ^\mathcal{T}\rightarrow \mathcal{M}_\beta ^\mathcal{T}\) for \(\alpha \le_{\mathcal{T}}\beta \), and the following branch 
		\[\ffrak{K}=\mathcal{M}_0^\mathcal{T}\rightarrow _{E_\xi }\mathcal{M}_{\xi +1}^\mathcal{T}\rightarrow \cdots \rightarrow \mathcal{M}_\gamma ^\mathcal{T}=\ffrak{K}^{\ffrak{M}}\text{.}\]
		We now show that \(E_\xi \upharpoonright\lambda =F\). \cite{Schindler} tells us that \(i_{0,\gamma }=j\upharpoonright\mathrm{K}\). Since \(\ffrak{V}\mid\lambda =\ffrak{M}\mid\lambda \) and \(\lambda =\kappa +\delta \), \(\mathrm{H}_{\kappa ^{+\delta }}^{\ffrak{V}}=\mathrm{H}_{\kappa ^{+\delta }}^{\ffrak{M}}\) so by local definability, \(\ffrak{K}^{\ffrak{M}}\cap \mathrm{H}_{\kappa ^{+\delta }}^{\ffrak{M}} = \ffrak{K}\cap  \mathrm{H}_{\kappa ^{+\delta }}\).  It follows that \(\str(E_\xi )=\lh(E_\xi )\ge \lambda \).  By normality of \(\mathcal{T}\), \(\cp(i_{\xi +1,\gamma })>\lh(E_\xi )\ge\lambda \).  So for any given \(\langle a,A\rangle \in [\lambda ]^{<\omega }\times [\kappa ]^{|a|}\) in \(\mathrm{K}\),
		\begin{align*}
			\langle a,A\rangle \in F&\quad\text{iff}\quad  a\in j(A)=i_{0,\gamma }(A)\\
			&\quad\text{iff}\quad  i_{\xi +1,\gamma }(a)\in i_{\xi +1,\gamma }(i_{0,\xi +1}(A))\\
			&\quad\text{iff}\quad  a\in i_{0,\xi +1}(A)\quad\text{iff}\quad  \langle a,A\rangle \in E_\xi 
		\end{align*}
		Hence \(E_\xi \) and \(F\) agree up to \(\lambda \).  By the initial segment condition, \(E_\xi \upharpoonright\lambda =F\) ensures \(F\) is on the sequence of \(\ffrak{K}\).  The strength of \(F\) being \(\lambda \) in \(\ffrak{K}\) follows from \(\mathrm{str}(E_\xi )=\lh(E_\xi )\ge\lambda \) from the normality of \(\mathcal{T}\).  Moreover, restricting \(E_\xi \) appropriately yields \(F_\mu \) as in the statement, with these in \(\ffrak{K}\) again by the initial segment condition.\qedhere
	\end{proof}
	Simply put, if \(\kappa \) is \(\kappa +\delta \)-strong in \(\ffrak{V}\), then \(\kappa \) is \(<\kappa ^{+\delta }\)-strong in \(\ffrak{K}\) according to the \(\mathrm{H}\)-hierarchy: \(\ffrak{K}\vDash \) \dblq{there are extenders \(F_\mu \) such that \(\mathrm{H}_\mu \in \Ult_{F_\mu }(\mathrm{K})\) for each regular \(\mu <\kappa ^{+\delta }\)}.
	
	In particular, any \(\kappa \) strong in \(\ffrak{V}\) is strong in \(\ffrak{K}\): to get \(\kappa \) as \(\lambda \)-strong in \(\ffrak{K}\), we just need a sufficiently large \(\mu \) such that \(\mathrm{K}\mid\lambda \subseteq \mathrm{H}_\mu ^{\ffrak{K}}\).  Then \(\kappa \) being \(\mu ^+\)-strong in \(\ffrak{V}\) implies being \(\lambda \)-strong in \(\ffrak{K}\). 
	
	But we can actually say much more than \(\ffrak{V}\)-strongs being \(\ffrak{K}\)-strong assuming \(\axm{UWISS}\): any cardinal stronger than a measurable in \(\ffrak{V}\) will be strong in \(\ffrak{K}\), as we will prove below.  As a result, any \(\ffrak{V}\)-strong cardinal is a limit of \(\ffrak{K}\)-strong cardinals, for example.  Indeed, we will show that any \(\ffrak{V}\)-strong cardinal is srs in \(\ffrak{K}\).  These ideas will be central for the (2) implies (1) direction of \hlink{maingoal}.
	\begin{theorem}\label{maincore}
		(2) implies (1) where these stand for
		\begin{enumerate}
			\item \(\Con(\axm{ZFC}+\dblq{\text{There is a proper class of strongs reflecting strongs}})\);
			\item \(\Con(\axm{ZFC}+\dblq{\text{There is a proper class of strong cardinals}}+\axm{UWISS})\).
		\end{enumerate}
	\end{theorem}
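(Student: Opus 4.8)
The plan is to assume $\Con(\axm{ZFC}+\dblq{\text{proper class of strongs}}+\axm{UWISS})$, fix a model $\ffrak{V}$ witnessing it, and exhibit inside $\ffrak{V}$ a definable inner model carrying a proper class of srs cardinals, whence $\Con(1)$ follows (consistency being arithmetic, hence absolute). I would open with a dichotomy on whether there is an inner model with a Woodin cardinal. If there is, that inner model satisfies \dblq{there is a Woodin cardinal}, so by \hlink{woodinpcsrs} it satisfies $\Con(\axm{ZFC}+\dblq{\text{proper class of srs}})$, and this arithmetic fact transfers up to $\ffrak{V}$, finishing this case. Hence I may assume there is no inner model with a Woodin and work with the core model $\ffrak{K}$ as governed by \hlink{coreresults} and \hlink{calcKseq}, the goal now being to show $\ffrak{K}$ itself has a proper class of srs cardinals.

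The crux is a bootstrapping lemma characterizing $\ffrak{K}$-strength: under $\axm{UWISS}$, a cardinal below some $\ffrak{V}$-strong cardinal is strong in $\ffrak{K}$ if and only if it is $\kappa+2$-strong in $\ffrak{V}$. The forward direction is the heart and is where $\axm{UWISS}$ is essential: note that \hlink{calcKseq} only delivers $<\kappa^{++}$-strength in $\ffrak{K}$ from $\kappa+2$-strength in $\ffrak{V}$, so one must amplify a \emph{bounded} amount of $\ffrak{V}$-strength into \emph{unbounded} $\ffrak{K}$-strength. I would argue contrapositively: if $\kappa$ is $\kappa+2$-strong in $\ffrak{V}$ but its $\ffrak{K}$-strength is bounded — say $\kappa$ is not $\lambda$-strong in $\ffrak{K}$ for a least such $\lambda$ — then the resulting gap in the $\ffrak{K}$-extender sequence at $\kappa$ can be read off by a $<\kappa$-strategically closed, $\le\kappa$-distributive coding poset $\bbb{P}$, chosen to lie in the class protected by $\axm{UWISS}$. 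By generic absoluteness $\ffrak{K}^{\ffrak{V}}=\ffrak{K}^{\ffrak{V}[G]}$ of \hlink{coreresults}(2), the extension still records that $\kappa$ is not $\lambda$-strong in $\ffrak{K}$, and via \hlink{calcKseq} this obstructs lifting any witness to $\kappa$'s $\kappa+2$-strength through $\bbb{P}$, so $\bbb{P}$ destroys that strength — contradicting $\axm{UWISS}$. The converse direction I would handle by standard comparison and (weak) covering arguments for $\ffrak{K}$ below a $\ffrak{V}$-strong cardinal, ruling out $\ffrak{K}$-strong cardinals with no matching $\ffrak{V}$-strength. Designing $\bbb{P}$ so that it is genuinely $\le\kappa$-distributive yet provably kills $\kappa+2$-strength is the principal obstacle of the entire proof.

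Granting the lemma, the proper class of $\ffrak{V}$-strong cardinals (each being $\kappa+2$-strong, hence $\ffrak{K}$-strong, and by \hlink{Σ2ref} a limit of such) yields a proper class of $\ffrak{K}$-strong cardinals. It then remains to upgrade \dblq{$\ffrak{K}$-strong} to \dblq{srs in $\ffrak{K}$} for each $\ffrak{V}$-strong $\kappa$. Fixing $\lambda$, I would take a $\ffrak{V}$-strong embedding $j\colon\mathrm{V}\to\mathrm{M}$ with $\cp(j)=\kappa$ and agreement $\ffrak{V}\mid\theta=\ffrak{M}\mid\theta$ for some $\theta\gg\lambda$, and apply \hlink{calcKseq} to extract a $\lambda$-strong extender $F$ on $\kappa$ on the $\ffrak{K}$-sequence, with $j\upharpoonright\ffrak{K}$ factoring through $\ffrak{K}\to\Ult_F(\ffrak{K})\to j(\ffrak{K})$ by a map of critical point above $\lambda$. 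To verify $F$ reflects strongs, I must match $\{\xi<\lambda:\xi\text{ strong in }\ffrak{K}\}$ with $\{\xi<\lambda:\Ult_F(\ffrak{K})\vDash\xi\text{ strong}\}$; the factor map (critical point above $\lambda$) reduces the latter to strongs of $j(\ffrak{K})=\ffrak{K}^{\ffrak{M}}$ below $\lambda$. For $\xi<\kappa$ the match is immediate from elementarity of $j$, since $j(\xi)=\xi$ and $j(\ffrak{K})=\ffrak{K}^{\ffrak{M}}$. For $\kappa\le\xi<\lambda$ I would route through the bootstrapping lemma inside $\ffrak{M}$: because $\xi+2$-strength is a bounded property, it is decided by $\ffrak{V}\mid\theta=\ffrak{M}\mid\theta$, so the lemma converts \dblq{$\xi$ strong in $\ffrak{K}$} into the $\ffrak{V}/\ffrak{M}$-absolute property of $\xi+2$-strength and back into \dblq{$\xi$ strong in $\ffrak{K}^{\ffrak{M}}$}, which the factor map transports to $\Ult_F(\ffrak{K})$.

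Assembling these, every $\ffrak{V}$-strong cardinal is srs in $\ffrak{K}$, so $\ffrak{K}$ models a proper class of srs cardinals and $\Con(1)$ holds. Beyond the coding forcing in the bootstrapping lemma, the point demanding the most care is the exact matching of strong cardinals in the interval $[\kappa,\lambda)$ during the srs verification: since strength is not a locally decidable property, one cannot rely on mere initial-segment agreement of the two core models and must instead lean on the \emph{equivalence} form of the bootstrapping lemma (in particular its converse, excluding stray $\ffrak{K}$-strong cardinals below $\lambda$ with no $\ffrak{V}$-strength) to replace global strength by the bounded, reflectable property of $\kappa+2$-strength.
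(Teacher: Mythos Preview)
Your overall architecture matches the paper's, but the two load-bearing steps are handled quite differently, and in your version the first one is a genuine gap.

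For the key lemma (\(\kappa\) is \(\kappa+2\)-strong in \(\ffrak{V}\Rightarrow\kappa\) is strong in \(\ffrak{K}\)), you propose a contrapositive via an unspecified ``coding poset'' that reads off a gap in the \(\ffrak{K}\)-sequence, and you flag designing this poset as the principal obstacle. But there is no mechanism here: in \(\ffrak{V}\) itself \(\kappa\) is already simultaneously \(\kappa+2\)-strong and (by hypothesis) not \(\lambda\)-strong in \(\ffrak{K}\), so merely preserving the latter fact through a generic extension does nothing to obstruct the former. What is actually needed is to arrange \((\kappa^{++})^{\ffrak{V}[G]}>\lambda\), so that \hlink{calcKseq} applied \emph{in} \(\ffrak{V}[G]\) pushes \(\ffrak{K}\)-strength past \(\lambda\). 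The paper does exactly this, directly rather than contrapositively: for arbitrary \(\delta\), force with \(\bbb{P}=\Col(\kappa^+,\beth_\delta^+)\). This is \({\le}\kappa\)-closed, hence squarely in the class protected by \(\axm{UWISS}\), so \(\kappa\) stays \(\kappa+2\)-strong in \(\ffrak{V}^{\bbb{P}}\); now \(\beth_\delta^+<(\kappa^{++})^{\ffrak{V}^{\bbb{P}}}\), so \hlink{calcKseq} together with generic absoluteness of \(\ffrak{K}\) yields an extender in \(\ffrak{K}=\ffrak{K}^{\ffrak{V}^{\bbb{P}}}\) witnessing \(\delta\)-strength. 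No coding is involved; the ``design problem'' you isolate dissolves once one sees that a plain collapse suffices.

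For the srs verification, your route requires the \emph{converse} of the bootstrapping lemma (\(\ffrak{K}\)-strong \(\Rightarrow\) \(\kappa+2\)-strong in \(\ffrak{V}\)), which you justify only by a gesture at covering. The paper never needs this direction. Instead it chooses \(\lambda\) to be a \(\ffrak{V}\)-strong cardinal, hence a limit of \(\xi\) that are \(\xi+2\)-strong in \(\ffrak{V}\) (and in \(\ffrak{Ult}_E(\ffrak{V})\) by agreement), hence by the forward direction alone a limit of \(\ffrak{K}\)-strongs and of \(\ffrak{K}^{\ffrak{Ult}_E(\ffrak{V})}\)-strongs. Now \hlink{Σ2ref} applied inside \(\ffrak{K}\) and inside \(\ffrak{K}^{\ffrak{Ult}_E(\ffrak{V})}\) gives that \({<}\lambda\)-strength implies full strength in each model; since local definability makes \(\ffrak{K}\mid\lambda=\ffrak{K}^{\ffrak{Ult}_E(\ffrak{V})}\mid\lambda\), the two models agree on \({<}\lambda\)-strength, and therefore on strongs below \(\lambda\). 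This replaces your appeal to the unproved converse by a short reflection argument using only what has already been established.
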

	\begin{proof}
		We're done if there is an inner model with a Woodin cardinal, so assume otherwise.  Let \( \ffrak{V}\vDash \axm{ZFC}+\axm{UWISS}\) have a proper class of strong cardinals.  It will be useful to understand what cardinals are strong in the core model \(\ffrak{K}\).
		\begin{claim}\label{maincorecl1}
			Suppose \(\kappa \) is \(\kappa +2\)-strong in \(\ffrak{V}\).  Therefore \(\kappa \) is strong in \(\ffrak{K}\).
		\end{claim}	
		\begin{proof}
			Let \(\delta >\kappa ^+\) be arbitrarily large.  Write \(\lambda =\beth _{\delta }^+\ge\delta \).  Consider \(\bbb{P}=\mathrm{Col}(\kappa ^+,\lambda )\).  By indestructibility, \(\kappa \) is still \(\kappa +2\)-strong in \(\ffrak{V}^{\bbb{P}}\) as witnessed by some \((\kappa ,\kappa +2)\)-extender \(E\) and notice \(\lambda <(\lambda ^+)^{\ffrak{V}}=(\kappa ^{++})^{\ffrak{V}^{\bbb{P}}}\).  By generic absoluteness, \(\ffrak{K}^{\ffrak{V}^{\bbb{P}}}=\ffrak{K}^{\ffrak{V}}\) and so \(\lambda \) is regular in \(\ffrak{K}\), \(\ffrak{Ult}_F(\ffrak{K})\).  By \hlink{calcKseq}, there is an extender \(F=F_{\lambda }\in \mathrm{K}\) such that \(\mathrm{K}\) and \(\Ult_F(\mathrm{K})\) agree on \(\mathrm{H}_{\lambda }\).  As \(\beth _\delta <\lambda \) is a strong limit in \(\ffrak{V}\), this also holds in \(\ffrak{K}\), so that \(\ffrak{K},\ffrak{Ult}_F(\ffrak{K})\vDash \) \dblq{\(\mathrm{H}_\lambda \supseteq \mathrm{V}_\delta \)}.  This means \(\ffrak{K}\mid\delta =\ffrak{Ult}_{F}(\ffrak{K})\mid\delta \) so that \(\kappa \) is \(\delta \)-strong in \(\ffrak{K}\).\qedhere
		\end{proof}	
		We now aim to show any \(\kappa \) strong in \(\ffrak{V}\) is actually srs in \(\ffrak{K}\). Let \(\kappa \) be strong in \( \ffrak{V}\) and let the \(\lambda \)-strength of \(\kappa \), for some strong \(\lambda \ge\kappa ^{+\P}\), be witnessed by a \((\kappa ,\lambda )\)-extender \(E\in \mathrm{V}\) and embedding \(j_E:\mathrm{V}\rightarrow \Ult_E(\mathrm{V})\).  Note that \(j_E\) restricts down to \(j_E\upharpoonright\mathrm{K}:\mathrm{K}\rightarrow \mathrm{K}^{\ffrak{Ult}_E(\ffrak{V})}\). Consider \(F=E\cap \mathrm{K}\) which is in \(\mathrm{K}\) and on the \(\ffrak{K}\)-sequence by \hlink{calcKseq}.  So consider the resulting ultrapower \(\ffrak{Ult}_{F}(\ffrak{K})\) via \(j_{F}:\mathrm{K}\rightarrow \Ult_{F}(\mathrm{K})\) which then factors \(j_E=k_F\circ j_F\) via \(k_{F}:\Ult_{F}(\mathrm{K})\rightarrow \mathrm{K}^{\ffrak{Ult}_E(  \ffrak{V})}\) as seen below.
		
		\begin{figure}[ht]
			\begin{center}
				\begin{tikzcd}[column sep=tiny]
					\mathrm{K} \arrow[rr, "j_E\upharpoonright\mathrm{K}"] \arrow[rd, "j_F"'] &                                & \mathrm{K}^{\Ult_E(\mathrm{V})} \\
					& \Ult_F(\mathrm{K}) \arrow[ru, "k_F"'] &                    
				\end{tikzcd}
			\end{center}
			\caption{Factoring ultrapower embeddings with \(\mathrm{K}\)}
		\end{figure}\label{corefactor}
		
		Note that \(\cp(k_{F})\ge\lambda \) so that for any cardinal \(\xi <\lambda \),
		\[\ffrak{Ult}_{F}(\ffrak{K})\vDash \dblq{\xi \text{ is strong}}\quad\text{iff}\quad \ffrak{K}^{\ffrak{Ult}_E(\ffrak{V})}\vDash \dblq{k_{F}(\xi )=\xi \text{ is strong}}\text{.}\]
		Thus it suffices to show \(\mathrm{K}\) and \(\mathrm{K}^{\Ult_E(\mathrm{V})}\) agree on strongs below \(\lambda \), because then all three would agree and so \(F\) would witness a \(\lambda \)-srs embedding for \(\kappa \) in \(\mathrm{K}\).  By local definability, \(\ffrak{K}\mid\lambda =\ffrak{K}^{\ffrak{Ult}_E(\ffrak{V})}\mid \lambda \) and hence we get that \(\ffrak{K}\) and \(\ffrak{K}^{\ffrak{Ult}_E(\ffrak{V})}\) agree on \(\dblq{\xi \text{ is }<\lambda \text{-strong}}\) whenever \(\xi <\lambda \).  So it suffices to show
		\[\ffrak{K},\ffrak{K}^{\ffrak{Ult}_E(\ffrak{V})}\vDash \dblq{\forall \xi <\lambda \ (\xi \text{ is }<\lambda \text{-strong}\rightarrow \xi \text{ is strong})}\text{.}\tag{\(*\)}\]
		Since \(\lambda \) is strong (in \(\ffrak{V}\)), \(\lambda \) is a limit of cardinals \(\xi \) that are \(\xi +2\)-strong.  By the strength of \(E\), if \(\xi \) is \(\xi +2\)-strong in \(\ffrak{V}\), then \(\xi \) is \(\xi +2\)-strong in \(\ffrak{Ult}_E(\ffrak{V})\), and hence \(\lambda \) is a limit of cardinals \(\xi \) that are \(\xi +2\)-strong in \(\ffrak{Ult}_E(\ffrak{V})\).  So by \hlink{maincorecl1}, \(\lambda \) is a limit of \(\ffrak{K}\)-strongs and \(\ffrak{K}^{\ffrak{Ult}_E(\ffrak{V})}\)-strongs.  As a result, if \(\xi <\lambda \) is not \(\beta \)-strong in either \(\ffrak{K}\) or \(\ffrak{K}^{\ffrak{Ult}_E(\ffrak{V})}\) for some \(\beta \), then by \hlink{Σ2ref}, this is reflected by some strong below \(\lambda \) in \(\ffrak{K}\) or \(\ffrak{K}^{\ffrak{Ult}_E(\ffrak{V})}\), showing (\(*\)) holds.  It follows that \(\ffrak{K}\) and \(\ffrak{K}^{\ffrak{Ult}_E(\ffrak{V})}\) agree on strongs below \(\lambda \), and since \(\ffrak{K}^{\ffrak{Ult}_E(\ffrak{V})}\) and \(\ffrak{Ult}_F(\ffrak{K})\) agree on strongs below \(\lambda \), we get that \(F\in \mathrm{K}\) witnesses that \(\kappa \) is \(\lambda \)-srs. A proper class of strongs in \(\ffrak{V}\) then gives a proper class of srs cardinals in \(\ffrak{K}\).\qedhere
	\end{proof}

	\section{The Forcing Direction} \label{The Forcing Direction}

	Now we show the harder direction of \hlink{maingoal}.  We show we can force a proper class of strongs with \(\axm{UWISS}\) from a proper class of srs cardinals. The general idea behind the poset, as with most indestructibility results, is a trial by fire to kill all degrees of strength.
	
	The result will be that the srs cardinals remain strong after forcing with the preparation, and small degrees of strength are, by virtue of surviving the trial by fire, weakly indestructible.  We do not get universal indestructibility for \emph{all} degrees of strength both because that's impossible by \hlink{hamkinsoopsy} and because it's possible for the tail poset to resurrect degrees of strength that were destroyed, something avoided in \cite{aptersargsyan} and \cite{apterhamkins} by cutting off the universe and declaring success anytime this might happen.
	
	Our trial proceeds with appropriate posets via a lottery in an Easton iteration, that is to say \dblq{reverse} Easton in the sense of many of Hamkins' papers and in \cite{aptersargsyan}.\footnote{I call an iteration's support \emph{Easton} iff direct limits are taken at (weakly) inaccessible stages and inverse limits elsewhere.}  What posets are appropriate?  Well, the ones that destroy the strength of a cardinal \(\kappa \) and also are simultaneously \(<\kappa \)-strategi\-cally closed and \(\le\kappa \)-distributive, basically violating \(\axm{UWISS}\).
	
	\begin{definition}\label{appropriatedef}
		For \(\delta \) a cardinal of strength \(\ge\rho \), we say a poset \(\bbb{Q}\) is \emph{\(\delta ,\rho \)-appro\-priate} iff
		\begin{enumerate}
			\item  \(\bbb{Q}\) is \(<\delta \)-strategi\-cally closed;
			\item  \(\bbb{Q}\) is \(\le\delta \)-distributive; and
			\item  \(\bbb{Q}\) destroys the \(\rho \)-strength of \(\delta \).
		\end{enumerate}
		If just (1) and (2) hold, we say \(\bbb{Q}\) is \emph{\(\delta \)-appro\-priate}.
	\end{definition}
	
	So the \emph{destructibility} of a cardinal \(\delta \)'s degree of strength \(\rho \) by a \(<\delta \)-strategi\-cally closed and \(\le\delta \)-distributive poset is obviously equivalent to the existence of a  \(\delta ,\rho \)-appro\-priate poset.	Hence we can restate \(\axm{UWISS}\) as the lack of any \(\delta ,\delta +2\)-appro\-priate posets for any \(\delta \).  We will show that the existence of appropriate posets is equivalent to the existence of \dblq{small} appropriate posets, basically meaning that we can bound the rank of \(\bbb{Q}\) and \(\rho \) by \(\delta ^{+\P}\).
	
	In examining \hlink{appropriatedef}, we have the following easy, useful result.
	
	\begin{corollary}\label{appropriatereflect}
		Let \(\bbb{P}\in \mathrm{V}\mid \alpha \) be a poset.  Therefore
		\begin{itemize}
			\item \(\bbb{P}\) is \(\delta ,\rho \)-appro\-priate iff for some (any) \(\lambda \ge|\alpha +\rho |^+\), \(\ffrak{V}\mid {\lambda }\mathrel{\vDash }\) \dblq{\(\bbb{P}\) is \(\delta ,\rho \)-appro\-priate}.
			\item For \(\dot{\bbb{Q}}\) a \(\bb{P}\)-name for a poset, \(\bbb{P}\Vdash \dblq{\bbb{Q}\text{ is }\check \delta ,\check \rho \text{-appro\-priate with rank }\check \beta }\) iff \(\ffrak{V}\mid \lambda \) satisfies this for \(\lambda \) greater than \(|\max(\alpha ,\rho ,\beta )|^+\).
		\end{itemize}
	\end{corollary}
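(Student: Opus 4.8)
The plan is to check, clause by clause, that each of the three defining conditions of \hlink{appropriatedef} is already decided by a bounded initial segment of the cumulative hierarchy, and then assemble them and lift the argument through the forcing relation for the second bullet. Throughout, fix \(\lambda \ge |\alpha +\rho |^+\) and recall that \(\delta <\rho \) (since \(\delta \) is \(\rho \)-strong), so \(\lambda \) lies well above \(\alpha \), \(\delta \), and \(\rho +\omega \). Conditions (1) and (2) are outright absolute. A winning strategy for player~\I in \(\mathcal{G}_{\mathbb{P}}^{\gamma +1}\) with \(\gamma <\delta \) is a function from \(\le^{\mathbb{P}}\)-decreasing plays of length \(<\delta \) in \(\mathbb{P}\in \mathrm{V}\mid \alpha \) into \(\mathbb{P}\), hence has rank far below \(\lambda \); likewise every open dense subset of \(\mathbb{P}\) and every \(<\delta \)-sized family of such sets lies in \(\mathrm{V}\mid \lambda \). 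The predicates \(\dblq{\sigma \text{ is a winning strategy}}\) and \(\dblq{\bigcap \mathcal{D}\text{ is open dense}}\) quantify only over \(\mathbb{P}\) and objects of rank below \(\lambda \), so \(\mathrm{V}\) and \(\mathrm{V}\mid \lambda \) agree on them.

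The substance is clause (3), that \(\mathbb{P}\) destroys the \(\rho \)-strength of \(\delta \), i.e.\ \(\mathbb{P}\Vdash \dblq{\check\delta \text{ is not }\check\rho \text{-strong}}\). The key point I would isolate is that \(\rho \)-strength is extender-certified at a bounded level: in any model \(W\supseteq \mathrm{V}\mid \rho \), the cardinal \(\delta \) is \(\rho \)-strong iff \(W\) contains a \((\delta ,\rho )\)-extender \(E\) with \(j_E(\delta )>\rho \) and \(W\mid \rho =\Ult_E(W)\mid \rho \). Coded as the sequence \(\langle E_a:a\in [\rho ]^{<\omega }\rangle \) of measures on the sets \([\delta ]^{|a|}\), such an \(E\) has rank at most \(\rho +\omega \) (each \(E_a\) has rank \(\le \delta +2\le \rho \)), and whether it witnesses \(\rho \)-strength is decided inside \(W\mid \mu \) for any \(\mu \) slightly above \(\rho \), since \(\Ult_E(W)\mid \rho \) is computed from \(E\) and \(W\mid \rho \). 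Thus \(\dblq{\delta \text{ is }\rho \text{-strong}}\) is equivalent to a statement about \(W\mid \mu \) alone.

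Finally I would invoke absoluteness of the forcing relation. Because \(\mathbb{P}\in \mathrm{V}\mid \alpha \), every \(\mathbb{P}\)-name for a set of rank \(<\rho +\omega \)—in particular for a witnessing extender, or for \(\mathrm{V}^{\mathbb{P}}\mid \mu \)—may be taken in nice form built from antichains of \(\mathbb{P}\), hence of rank below \(|\alpha +\rho |^+\le \lambda \) by the nice-name rank bound. Consequently the recursive computation of \(p\Vdash _{\mathbb{P}}\dblq{\check\delta \text{ is not }\check\rho \text{-strong}}\) refers only to conditions and names of rank below \(\lambda \), over which \(\mathrm{V}\mid \lambda \) is closed, so \(\Vdash \) is computed identically in \(\mathrm{V}\mid \lambda \) and in \(\mathrm{V}\). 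Combining the three clauses gives the first bullet, and since the common truth value is independent of \(\lambda \) above the threshold, \(\dblq{\text{for some}}\) and \(\dblq{\text{for any}}\) coincide. The second bullet is the relativized version: \(\mathbb{P}\Vdash \dblq{\mathbb{Q}\text{ is }\check\delta ,\check\rho \text{-appropriate}}\) unfolds, via the two-step iteration \(\mathbb{P}*\dot{\mathbb{Q}}\), into a forcing statement of exactly the same shape, now with \(\dot{\mathbb{Q}}\) of rank governed by \(\max (\alpha ,\beta )\); the identical argument, with threshold \(|\max (\alpha ,\rho ,\beta )|^+\), applies.

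The step I expect to be the main obstacle is the rank accounting inside clause (3): one must be certain that no witness to \(\rho \)-strength in the extension—and no \(\mathbb{P}\)-name for one—escapes above \(\lambda \). This rests on combining the locality of extender-certification (witnesses of rank \(\le \rho +\omega \), verified below a fixed \(\mu \)) with the nice-name bound for posets in \(\mathrm{V}\mid \alpha \); the delicate direction is showing that \emph{failure} of \(\rho \)-strength truly reflects, namely that \(\mathrm{V}\mid \lambda \) sees a witnessing extender whenever \(\mathrm{V}\) does, which is precisely what the bounded ranks guarantee. Once this is pinned down, the remainder is a routine appeal to the absoluteness of the forcing relation for names of bounded rank.
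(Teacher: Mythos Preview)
Your proposal is correct and follows essentially the same approach as the paper: bound the ranks of the witnesses for strategic closure, distributivity, and (non)existence of \(\rho\)-strong extenders, then invoke nice names and absoluteness of the forcing relation for names of bounded rank. The only difference is organizational---the paper proves the second bullet directly and derives the first as the special case where \(\mathbb{P}\) is trivial, whereas you prove the first bullet and then relativize through \(\mathbb{P}*\dot{\mathbb{Q}}\) for the second---but the underlying argument is the same.
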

	\begin{proof}
		It suffices to show the second since the first follows from it in the case that \(\bbb{P}\) is trivial.  Let \(\kappa =|\max(\alpha ,\rho ,\beta )|^+\). \(\bb{Q}\) is forced to be a subset of \((\mathrm{V}\mid \beta )^{\bb{P}}\).  So without loss of generality, \(\dot{\bb{Q}}\) is a nice \(\bb{P}\)-name for a subset of \((\mathrm{V}\mid \beta )^{\bb{P}}\) which therefore has rank \(<\kappa \).  \(<\delta \)-strategic closure and \(\delta \)-distributivity only make claims about \(\mathcal{P}(\bb{Q})\).  Nice names again mean that we only need access to \(\bb{P}\)-names of rank \(<\kappa \), meaning these concepts are absolute between \(\ffrak{V}\) and \(\ffrak{V}\mid \lambda \) for \(\lambda \ge\kappa \).  So the remaining concepts we need absoluteness for are related to the (non)-existence of extenders and inaccessibles: we need
		\[\bbb{P}\Vdash \dblq{\dot{\bbb{Q}}\Vdash \dblq{\check \delta \text{ is no longer }\check \rho \text{-strong}}}\text{.}\]
		Working in \(\mathrm{V}^{\bbb{P}}\), nice \(\bb{Q}\)-names for \((\delta ,\rho ')\)-extenders will have rank \(<\rho '+\beta +\omega <\kappa \).  So in \(\mathrm{V}\), we only need to consider nice names for subsets of \((\mathrm{V}\mid {\rho +\beta +\omega })^{\bb{P}}\), which all have rank \(<\kappa \).\qedhere
	\end{proof}
	The search for a \(\delta ,\rho \)-appro\-priate poset—equivalently the weak destructibility of \(\delta \)'s strength—can be bounded in the presence of lots of strong cardinals: if there is some \(\delta ,\rho \)-appro\-priate poset, then we can choose \(\rho \) and the rank of the poset to be less than \(\delta ^{+\P}\) just as in \cite{aptersargsyan}.   This isn't too difficult to show, and helps us in defining our forcing preparation later.
	\begin{lemma}\label{appropriatesmall}
		Let \(\delta \in \mathrm{Ord}\).  Let \(\bbb{R}\in \mathrm{V}\mid \alpha \) be a poset.  Suppose there's a  \(\delta ,\rho ^{**}\)-appro\-priate \(\bbb{Q}^{**}\in \mathrm{V}^{\bbb{R}}\).  Therefore, there's a \(\delta ,\rho \)-appro\-priate \(\bbb{Q}\in \mathrm{V}^{\bbb{R}}\mid (\max(\delta ,\alpha )^{+\P})^{\ffrak{V}}\) where \(\rho <(\max(\delta ,\alpha )^{+\P})^{\ffrak{V}}\) and \(\rho \le\rho ^{**}\).
	\end{lemma}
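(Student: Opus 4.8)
The plan is to reflect the existence of an appropriate poset down below the least $\ffrak{V}$-strong cardinal above $\max(\delta,\alpha)$, which I write $\theta=(\max(\delta,\alpha)^{+\P})^{\ffrak{V}}$; note $\delta,\alpha<\theta$, that $\bbb{R}\in\mathrm{V}\mid\theta$, and—crucially—that $\theta$ is strong, so \hlink{Σ2ref} is available with $\theta$ in the role of the strong cardinal. The hypothesis furnishes an $\bbb{R}$-name $\dot{\bbb{Q}}^{**}$ (of some rank $\beta^{**}$) with $\bbb{R}\Vdash\dblq{\dot{\bbb{Q}}^{**}\text{ is }\check\delta,\check\rho^{**}\text{-appropriate}}$, so the first-order statement
\[\Phi(\delta,\bbb{R})\colon\quad \exists\rho\,\exists\dot{\bbb{Q}}\ \bigl(\bbb{R}\Vdash\dblq{\dot{\bbb{Q}}\text{ is }\check\delta,\check\rho\text{-appropriate}}\bigr)\]
holds in $\ffrak{V}$, and by \hlink{appropriatereflect} it holds in $\ffrak{V}\mid\zeta$ for every sufficiently large limit cardinal $\zeta$ (any $\zeta$ above $|\max(\alpha,\rho^{**},\beta^{**})|^+$). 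I would then apply \hlink{Σ2ref} to $\Phi(\delta,\bbb{R})\wedge\dblq{\text{there is no largest cardinal}}$ with parameters $\delta,\bbb{R}\in\mathrm{V}\mid\theta$, obtaining unboundedly many $\alpha'<\theta$ for which $\ffrak{V}\mid\alpha'$ satisfies this conjunction; fix one with $\alpha'>\max(\delta,\alpha)$.

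From $\ffrak{V}\mid\alpha'\vDash\Phi(\delta,\bbb{R})$ I extract witnesses $\rho'<\alpha'$ and an $\bbb{R}$-name $\dot{\bbb{Q}}'\in\mathrm{V}\mid\alpha'$ of some rank $\beta'<\alpha'$ with $\ffrak{V}\mid\alpha'\vDash\bbb{R}\Vdash\dblq{\dot{\bbb{Q}}'\text{ is }\check\delta,\check\rho'\text{-appropriate}}$. The point of the extra conjunct is that $\ffrak{V}\mid\alpha'\vDash\dblq{\text{no largest cardinal}}$ forces $\alpha'$ to be a limit cardinal, whence $|\max(\alpha,\rho',\beta')|^+<\alpha'$; \hlink{appropriatereflect} (in the backward direction, with $\lambda=\alpha'$) then upgrades this to a genuine $\bbb{R}\Vdash\dblq{\dot{\bbb{Q}}'\text{ is }\check\delta,\check\rho'\text{-appropriate}}$ in $\ffrak{V}$. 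Interpreting $\dot{\bbb{Q}}'$ by the generic gives, in $\mathrm{V}^{\bbb{R}}$, a $\delta,\rho'$-appropriate poset $\bbb{Q}$; since $\dot{\bbb{Q}}'$ has rank $<\alpha'<\theta$ and $\theta$ is inaccessible, $\bbb{Q}\in\mathrm{V}^{\bbb{R}}\mid\theta$ and $\rho:=\rho'<\theta$, as required.

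The only thing this does not yet deliver is the bound $\rho\le\rho^{**}$, and here I would split on the size of $\rho^{**}$. If $\rho^{**}\ge\theta$, then $\rho=\rho'<\theta\le\rho^{**}$ automatically. If instead $\rho^{**}<\theta$, then $\rho^{**}\in\mathrm{V}\mid\theta$ is a legitimate parameter, so I run the whole argument with the bounded statement $\Phi'(\delta,\bbb{R},\rho^{**})$ obtained from $\Phi$ by replacing $\exists\rho$ with $\exists\rho\le\rho^{**}$; this is witnessed in $\ffrak{V}$ by $\rho^{**}$ itself together with $\dot{\bbb{Q}}^{**}$, and the reflected witness then satisfies $\rho'\le\rho^{**}$. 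I expect the main obstacle to be precisely the transfer of appropriateness from the reflecting level $\ffrak{V}\mid\alpha'$ back up to $\ffrak{V}$: a priori the reflected witnesses could have rank arbitrarily close to $\alpha'$, so without the \dblq{no largest cardinal} conjunct one cannot guarantee that $\alpha'$ clears the absoluteness threshold of \hlink{appropriatereflect}. Everything else is bookkeeping about nice names and about $\theta$ being a limit cardinal lying above all the relevant parameters.
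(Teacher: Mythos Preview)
Your argument is correct and rests on the same idea as the paper's: reflect the existence of an appropriate poset down below the strong cardinal \(\theta=(\max(\delta,\alpha)^{+\P})^{\ffrak{V}}\). The difference is packaging. You invoke the abstracted \(\Sigma_2\)-reflection lemma (\hlink{Σ2ref}), which forces you to keep all parameters inside \(\mathrm{V}\mid\theta\); since \(\rho^{**}\) need not lie there, you must split into the cases \(\rho^{**}\ge\theta\) and \(\rho^{**}<\theta\) to recover the bound \(\rho\le\rho^{**}\). The paper instead works directly with a \(\lambda\)-strong embedding \(j:\mathrm{V}\to\mathrm{M}\) with \(\cp(j)=\theta\): it observes that the witness \(\dot{\bbb{Q}}^{**}\) lives in \(\ffrak{M}\mid j(\theta)\) with \(\rho^{**}\le j(\rho^{**})\) automatically, and a single application of elementarity pulls back a \(\dot{\bbb{Q}}\in\mathrm{V}\mid\theta\) with \(\rho\le\rho^{**}\) and \(\rho<\theta\) in one stroke, no case split needed. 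Your ``no largest cardinal'' conjunct, used to guarantee \(\alpha'\) clears the absoluteness threshold of \hlink{appropriatereflect}, is the one extra piece of care the abstracted route demands; the embedding route sidesteps it because \(\ffrak{M}\mid\lambda=\ffrak{V}\mid\lambda\) already sits at a level where \hlink{appropriatereflect} applies.
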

	\begin{proof}
		Let \(\bbb{Q}^{**}\) have rank greater than \(\gamma =(\max(\delta ,\alpha )^{+\P})^{\ffrak{V}}\) in \( \ffrak{V}^{\bbb{P}_\delta }\). \hlink{appropriatereflect} tells us for sufficiently large \(\lambda \), \(\bbb{Q}^{**}\) is \(\delta ,\rho ^{**}\)-appro\-priate in \(\ffrak{V}^{\bbb{R}}\mid \lambda \).  More precisely, we get a name \(\dot{\bbb{Q}}^{**}\in \mathrm{V}\mid \lambda \) such that
		\[\ffrak{V}\mid \lambda \vDash \dblq{\bbb{R}\Vdash \dblq{\dot{\bbb{Q}}^{**}\text{ is }\check\delta ,\check\rho ^{**}\text{-appro\-priate}}}\text{.}\]
		Let \(j:\mathrm{V}\rightarrow \mathrm{M}\) witness the \(\lambda \)-strength of \(\gamma \) in \( \ffrak{V}\). It follows that
		\[\ffrak{V}\mid \lambda =\ffrak{M}\mid \lambda \vDash \dblq{\bbb{R}\Vdash \dblq{\dot{\bbb{Q}}^{**}\text{ is }\check{\delta },\check{\rho }^{**}\text{-appro\-priate}}}\text{.}\]
		Since \(\rho ^{**}\) and the rank of \(\bbb{Q}^{**}\) are below \(\lambda \le j(\gamma )\), \(\ffrak{M}\) believes there's an \(\bb{R}\)-name for a poset \(\dot{\bbb{Q}}^*\) in \(\mathrm{M}\mid {j(\gamma )}\) that is \(\delta ,\rho ^*\)-appro\-priate for some \(\rho ^*<j(\gamma )\) and in particular, \(\rho ^*=\rho ^{**}\le j(\rho ^{**})\).  Elementarity then gives a name \(\dot{\bbb{Q}}\) for a  \(\delta ,\rho \)-appro\-priate poset in \(\mathrm{V}\mid \gamma \) with \(\rho <\gamma \) and \(\rho \le \rho ^{**}\).  The rank of this poset in \( \ffrak{V}^{\bbb{R}}\) is therefore below \(\gamma \).\qedhere
	\end{proof}
	In particular, if \(\alpha <\delta ^{+\P}\) then \(\dot{\bbb{Q}}\) will have rank \(<\delta ^{+\P}\).
	
	\subsection{Forcing \axm{UWISS}} \label{Forcing UWISS}
	
	We now attempt to prove the following, one of the directions from \hlink{maingoal}.
	
	\begin{theorem}\label{mainforcing}
		(1) implies (2) where these stand for
		\begin{enumerate}
			\item \(\Con(\axm{ZFC}\mathrel{+}\) \dblq{There is a proper class of srs cardinals}\()\).
			\item \(\Con(\axm{ZFC}\mathrel{+}\) \dblq{There is a proper class of strongs} \(\mathrel{+}\axm{UWISS})\).
		\end{enumerate}
	\end{theorem}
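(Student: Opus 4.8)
The plan is to build a reverse-Easton (Easton-support) iteration $\bbb{P}$ over a model $\ffrak{V}$ with a proper class of srs cardinals, where at each stage we use a lottery sum of appropriate posets to systematically destroy every small degree of strength that can be destroyed, while arranging that the srs cardinals survive as strong cardinals. Concretely, I would define $\bbb{P}$ as an iteration $\langle \bbb{P}_\delta, \dot{\bbb{Q}}_\delta : \delta \in \mathrm{Ord}\rangle$ where $\dot{\bbb{Q}}_\delta$ is (forced to be) the lottery sum of all $\delta, \rho$-appropriate posets of rank $<\delta^{+\P}$ for $\rho < \delta^{+\P}$ — the bound being exactly what \hlink{appropriatesmall} licenses, so that stage $\delta$ sees every relevant destructible degree of strength at a cardinal $\delta$ and actually destroys it. Nontrivial forcing happens only at cardinals $\delta$ whose $\rho$-strength is destructible for some such small $\rho$; elsewhere we force trivially. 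The Easton support (direct limits at inaccessibles, inverse limits below) keeps the iteration class-length but locally tame.

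The first main thing to verify is that this iteration preserves the appropriate closure and distributivity so that it is itself legitimate and does not accidentally destroy small degrees of strength it was not supposed to: at each stage the tail $\bbb{P}/\bbb{P}_{\delta+1}$ should be $\le\delta$-distributive (indeed suitably $<\delta$-strategically closed) so that it cannot resurrect or destroy the $\delta+2$-strength of a cardinal that already survived, and the pieces $\dot{\bbb{Q}}_\delta$ are by construction $<\delta$-strategically closed and $\le\delta$-distributive. This is the standard reverse-Easton bookkeeping (I would cite \cite{cummings} for the strategic-closure analogues of the usual closure lemmas), combined with \hlink{appropriatereflect} and \hlink{appropriatesmall} to know the lottery at stage $\delta$ is a set and that restricting attention to rank below $\delta^{+\P}$ loses nothing. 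The payoff is that in the final model $\axm{UWISS}$ holds: if some $\delta$ were $\delta+2$-strong with that strength still destructible, then by \hlink{appropriatesmall} there would be a small $\delta, \delta+2$-appropriate poset $\bbb{Q}$, but $\bbb{Q}$ (or its lottery coordinate) was already an option at stage $\delta$, and the surviving generic either already played it or the tail's distributivity forbids reintroducing the strength — the usual lottery argument showing the strength must already have been killed.

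The harder and genuinely delicate step is showing that each srs cardinal $\kappa$ \emph{remains strong} in $\ffrak{V}^{\bbb{P}}$. The plan is the master-condition / lifting argument: fix $\lambda$ and take a $\lambda$-srs embedding $j = j_E : \ffrak{V} \to \ffrak{M}$ with $\cp(j)=\kappa$; the srs property — that $\ffrak{M}$ and $\ffrak{V}$ agree on which $\xi<\lambda$ are strong — is exactly what guarantees that $j(\bbb{P})$ and $\bbb{P}$ agree below $\lambda$ on \emph{where} nontrivial forcing occurs, since the lottery at each stage $\xi<\lambda$ depends only on the (destructible) degrees of strength at $\xi$, which in turn are controlled by whether $\xi$ is strong. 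I would then factor $j(\bbb{P}) = \bbb{P}_\kappa * \dot{\bbb{Q}}_\kappa * \dot{\bbb{P}}_{\mathrm{tail}}$, arrange (by opting for trivial forcing or a directed-closed coordinate at stage $\kappa$ via the lottery) that the stage-$\kappa$ forcing is $<\kappa$-closed in $\ffrak{M}$ so a master condition can be built, and lift $j$ to $j : \ffrak{V}[G] \to \ffrak{M}[j(G)]$ by constructing an $\ffrak{M}$-generic for the tail $\dot{\bbb{P}}_{\mathrm{tail}}$ using the high closure of the tail (above $\lambda$) together with the agreement of $\ffrak{M}$ and $\ffrak{V}$ up to $\lambda$. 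The main obstacle is precisely controlling the lottery choices made below $j(\kappa)$ on the $\ffrak{M}$-side so that $j``G$ generates an $\ffrak{M}$-generic filter — i.e.\ ensuring the generic $G$ "chose correctly" at enough stages to make the lift possible, which is where the srs reflection (and not mere hyperstrength) is essential, since it is what forces $j(\bbb{P})\restriction\lambda$ to match $\bbb{P}\restriction\lambda$ rather than merely resemble it. Once the lift exists for every $\lambda$, $\kappa$ is strong in the extension, and a proper class of srs cardinals yields a proper class of strongs, giving (2).
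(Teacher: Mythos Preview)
Your overall architecture matches the paper's---Easton-support lottery iteration, \(\axm{UWISS}\) via trial-by-fire, lifting srs embeddings to preserve strength---but there are two genuine gaps.

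The serious one is the lifting. For a \(\lambda\)-srs extender embedding \(j:\ffrak{V}\to\ffrak{M}\), the ultrapower \(\ffrak{M}\) is closed only under \(\kappa\)-sequences in \(\ffrak{V}\), not \(\lambda\)-sequences. So while the tail \(\dot{\bbb{R}}_{[\lambda,j(\lambda))}^{\ffrak{M}}\) is \(\lambda^+\)-strategically closed \emph{in \(\ffrak{M}\)}, from the point of view of \(\ffrak{V}[G]\) it is only \(\kappa^+\)-strategically closed---and \(\ffrak{M}[G]\) has far more than \(\kappa^+\) dense subsets of the tail to meet. You cannot simply diagonalize to build an \(\ffrak{M}[G]\)-generic. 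The paper resolves this with a hull argument: form \(\ffrak{N}\preccurlyeq\ffrak{M}\) generated by a single seed (so \(\ffrak{N}\) has only \(\kappa^+\) dense sets and is \(\kappa\)-closed in \(\ffrak{V}\)), build \(H_0\) generic over \(\ffrak{N}[G]\) using \(\kappa^+\)-strategic closure, then argue \(H_0\) is already \(\ffrak{M}[G]\)-generic because any dense \(D=j(f)(r)\in\ffrak{M}[G]\) contains \(\bigcap_{s\in[\lambda]^{<\omega}}j(f)(s)\in\mathrm{N}[G]\), which is dense by the \(\lambda^+\)-distributivity \emph{inside} \(\ffrak{M}\). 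Your plan to ``opt for trivial forcing at stage \(\kappa\) and build a master condition'' is borrowed from the supercompact setting and does not fit here: your lottery contains no trivial option, and in any case the correct factoring is at \(\lambda\), not \(\kappa\) (the srs agreement gives \(\bbb{P}_\lambda^{\ffrak{V}}=\bbb{P}_\lambda^{\ffrak{M}}\), so \(G\) is already \(\ffrak{M}\)-generic through \(\lambda\)). The segment \([\lambda,j(\kappa))\), where \(j"G\) is automatically trivial by Easton support, must be built from scratch, and that is exactly where the counting problem bites.

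Second, your iteration omits the auxiliary collapse \(\dot{\bbb{C}}=\ffrak{Col}(\rho^{+\sharp},|\dot{\bbb{B}}|)\) that the paper appends to each lottery term. Without it, a stage-\(\mu\) poset \(\dot{\bbb{B}}\) can have rank approaching \((\mu^{+\P})^{\ffrak{V}}\), possibly beyond the next non-trivial stage \(\delta\); then \(\bbb{P}_\delta\not\subseteq\mathrm{V}\mid\delta\), the \(\delta\)-cc fails, and hypothesis (4) of \hlink{iterationtailclosure} is unavailable---so you cannot conclude the tail \(\dot{\bbb{R}}_{[\delta,\lambda)}\) is \(<\delta\)-strategically closed, which both your \(\axm{UWISS}\) argument and the lifting need. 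The collapse is precisely what guarantees that any \(\delta\) still inaccessible in \(\ffrak{V}^{\bbb{P}_\delta}\) sees all prior stages as small.
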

	
	Suppose \( \ffrak{V}\vDash \axm{ZFC}+\dblq{\text{there is a proper class of srs cardinals}}\).  Without loss of generality, also assume that \( \ffrak{V}\vDash \axm{GCH}\).\footnote{This can be done if necessary by forcing with the Easton support iteration that adds a cohen subset to each successor cardinal.  This will preserve srs cardinals (and many other large cardinal properties).}
	
	The basic idea is a trial by fire where anything that emerges with some degree of strength should have its degree of strength weakly indestructible.  A slight hiccup with this is that we don't actually have much control over what the resulting strength is, so rather than all of its degrees of strength being ewakly indestructible, just its small ones are.  A broad outline of the proof is as follows.  Here \(\bbb{P}_\kappa =\bigast_{\alpha <\kappa }\dot{\bbb{Q}}_\alpha \), the whole preparation is \(\bbb{P}=\bbb{P}_{\mathrm{Ord}}\), and the tail forcings from \(\delta \) up to \(\gamma \) are \(\dot{\bbb{R}}_{[\delta ,\gamma )}\cong\bigast_{\delta \le\alpha <\gamma }\dot{\bbb{Q}}_\alpha \).
	\begin{enumerate}
		\item[i.] Show that if \(\kappa \) is \(\kappa +2\)-strong in \(\ffrak{V}^{\bbb{P}}\), then this is weakly indestructible, showing \(\ffrak{V}^{\bbb{P}}\vDash \axm{UWISS}\).  This is done by showing
		\begin{itemize}
			\item if we are able to destroy \(\kappa \)'s \(\kappa +2\)-strength, then \(\dot{\bbb{Q}}_\kappa \) should be non-empty and would have already destroyed this.
		\end{itemize}
		\item[ii.] Show that if \(\kappa \) is \(\lambda \)-srs in \(\ffrak{V}\) then \(\kappa \) is \(\lambda \)-strong in \(\ffrak{V}^{\bbb{P}}\), implying that any \(\ffrak{V}\)-srs cardinal is \(\ffrak{V}^{\bbb{P}}\)-strong, and there is a proper class of both. This is done by showing \(\kappa \) is \(\lambda \)-strong in \(\ffrak{V}^{\bbb{P}_\lambda }\) for large \(\lambda \) as follows.
		\begin{itemize}
			\item Take a \(\lambda \)-srs embedding \(j:\mathrm{V}\rightarrow \mathrm{M}\) and in \(\mathrm{V}[G]=\mathrm{V}^{\bbb{P}_\lambda }\), find a generic \(H\) for the tail forcing \(\dot{\bbb{R}}_{[\lambda ,j(\lambda ))}\) over a specific hull \(\mathrm{N}[G]\preccurlyeq\mathrm{M}[G]\).
			\item Then we show this is generic over \(\mathrm{M}[G]\) using the strategic closure of the tail \emph{within} \(\ffrak{M}[G]\), not \(\ffrak{V}[G]\), to find representations for dense sets of \(\ffrak{M}[G]\) inside \(\mathrm{N}[G]\).
			\item Then we lift \(j\) to the \(\lambda \)-strong \(j^+:\mathrm{V}[G]\rightarrow \mathrm{M}[G*H]\) within \(\ffrak{V}[G]\), showing \(\kappa \) is still \(\lambda \) strong in \(\ffrak{V}[G]\).
		\end{itemize}
	\end{enumerate}
	A necessary consequence of \hlink{hamkinsoopsy} is that srs cardinals have their strengths destroyed by \(\bbb{P}_{\kappa +1}\), but will have their \(\lambda \)-strength resurrected by stage \(\lambda \) (whenever \(\lambda \) is a non-measurable, inaccessible limit of strongs).
	
	More generally, showing that degrees of strength aren't resurrected is not as simple a task as it might seem, and disregarding \hlink{hamkinsoopsy}, this is partly why we can't use this preparation to get indestructible strength for large strengths.  Let me describe the situation in a little more detail to show what goes wrong. The basic idea is that these cardinals are not going through this \dblq{trial by fire} alone, and a later cardinal can act like a medic.
	
	Naïvely, one will proceed destroying as much strength as possible: if \(\kappa \) is \(\rho \)-strong in \(\ffrak{V}^{\bbb{P}_\kappa }\), we try to destroy this with \(\kappa \)-appro\-priate posets if we can, and this constitutes the forcing done at stage \(\kappa \).  If we destroy \(\kappa \)'s strength down to be \(<\rho \), but some non-trivial forcing is done at stage \(\delta <\rho \), we might accidentally resurrect \(\kappa \)'s \(\rho \)-strength, as below with \hyperlink{interactresurrect}{Figure 2}.
	
	\begin{figure}[ht]
		\centering
		\begin{tikzpicture}
			\draw [decorate, decoration = {brace, amplitude=5pt}] (0.1,.3) --  (4.4,.3) node[pos=0.5,above]{strength of \(\kappa \)};
			
			\draw (0,0) node[left]{\(\mathrm{V}^{\bbb{P}_\kappa }\)} -- (5,0);
			\draw (0,.25) -- (0,-.25) node[below]{\(\kappa \)};
			\draw[fill,opacity=.3] (0,.125) -- (0,-.125) -- (4.5,-.125) -- (4.5,.125) -- (0,.125);
			
			\draw (0,-1) node[left]{\(\mathrm{V}^{\bbb{P}_{\kappa +1}}\)} -- (5,-1);
			\draw (0,-1+.25) -- (0,-1-.25) node[below]{\(\kappa \)};
			\draw[fill,opacity=.3] (0,-1+.125) -- (0,-1-.125) -- (2.5,-1-.125)  -- (2.5,-1+.125) -- (0,-1+.125);
			
			\draw[fill=black!50,opacity=.2] (2.5,-1+.125) -- (2.5,-1-.125) -- (4.5,-1-.125) node[below left,opacity=.75]{destroyed}-- (4.5,-1+.125) -- (0,-1+.125);
			
			\draw (0,-2) node[left]{\(\mathrm{V}^{\bbb{P}_{\delta +1}}\)} -- (5,-2);
			\draw (0,-2+.25) -- (0,-2-.25) node[below]{\(\kappa \)};
			\draw (2,-2+.25) -- (2,-2-.25) node[below]{\(\delta \)};
			
			\draw[fill,opacity=.3] (0,-2+.125) -- (0,-2-.125) -- (2.5,-2-.125) -- (2.5,-2+.125) -- (0,-2+.125);
			
			\draw[fill=black,opacity=.2] (2.5,-2+.125) -- (2.5,-2-.125) -- (4.5,-2-.125) node[below left,opacity=.75]{resurrected}-- (4.5,-2+.125) -- (0,-2+.125);
		\end{tikzpicture}
		\caption{Interaction resurrecting strength}
	\end{figure}\label{interactresurrect}
	
	Moreover, there is a problem if a cardinal's strength is merely playing dead. For suppose \(\kappa \) is \(\lambda \)-strong in \(\ffrak{V}\). As we approach \(\kappa \), \(\kappa \) might reduce its strength so that \(\kappa \) is merely \(\rho \)-strong in \(\ffrak{V}^{\bbb{P}_\kappa }\) and weakly indestructible there.  In that case, we would do nothing, for we don't see any strength to destroy.  But then, it may be that \(\rho \) is large enough that the next non-trivial stage of forcing occurs at some \(\delta <\rho \), in which case, \(\kappa \) might \dblq{wake up} and we accidentally resurrect \(\kappa \)'s \(\lambda \)-strength. Because we never think to return to previously dealt with stages, \(\kappa \) might remain \(\lambda \) strong in \(\ffrak{V}^{\bbb{P}}\) but not be weakly indestructible.
	
	Any attempted solution to this will ultimately either result in no resulting large strength or further forcing that might again resurrect strength by \hlink{hamkinsoopsy}. For example, we might try to collapse destroyed degrees of strength below the next stage of forcing.  This is due to the general fact that if \(\kappa \)'s strength is \(\rho <\kappa ^{+\sharp\sharp}\), then the subsequent stages won't resurrect degrees of strength by distributivity of the tail forcing proven with \hlink{prepmainforcingtail}, expressed with \hyperlink{nointeractresurrect}{Figure 3} below.
	
	\begin{figure}[ht]
		\centering
		\begin{tikzpicture}[xscale=1.55]
			\draw (0,0) node[left]{\(\mathrm{V}^{\bbb{P}_\kappa }\)} -- (5,0);
			\draw (0,.25) -- (0,-.25) node[below]{\(\kappa \)};
			\draw[fill,opacity=.3] (0,.125) -- (0,-.125) -- (4.5,-.125) -- (4.5,.125) -- (0,.125);
			
			\draw (0,-1) node[left]{\(\mathrm{V}^{\bbb{P}_{\kappa +1}}\)} -- (5,-1);
			\draw (0,-1+.25) -- (0,-1-.25) node[below]{\(\kappa \)};
			\draw[fill,opacity=.3] (0,-1+.125) -- (0,-1-.125) -- (1.5,-1-.125)  -- (1.5,-1+.125) -- (0,-1+.125);
			
			\draw[fill=black!50,opacity=.2] (1.5,-1+.125) -- (1.5,-1-.125) -- (4.5,-1-.125) node[below left,opacity=.75]{destroyed}-- (4.5,-1+.125) -- (0,-1+.125);
			
			\draw (0,-2) node[left]{\(\mathrm{V}^{\bbb{P}_{\kappa ^{+\sharp\sharp}+1}}\)} -- (5,-2);
			\draw (0,-2+.25) -- (0,-2-.25) node[below]{\(\kappa \)};
			\draw (3.5,-2+.25) -- (3.5,-2-.25) node[below]{\(\kappa ^{+\sharp\sharp}\)};
			
			\draw[fill,opacity=.3] (0,-2+.125) -- (0,-2-.125) -- (1.5,-2-.125) -- (1.5,-2+.125) -- (0,-2+.125);
			
			\draw[fill=black!50,opacity=.2] (1.5,-2+.125) -- (1.5,-2-.125) -- (3.5,-2-.125) -- (3.5,-2+.125) -- (0,-2+.125);
			
			\draw [decorate, decoration = {brace, amplitude=5pt,mirror}] (1.5+.1,-2-.3) --  (3.5-.1,-2-.3) node[pos=0.5,below]{\(\begin{array}{c}\text{cannot be}\\\text{resurrected}\end{array}\)};
		\end{tikzpicture}
		\caption{Non-interaction leaving strength destroyed}
	\end{figure}\label{nointeractresurrect}
	
	The forcing at \(\kappa ^{+\sharp\sharp}\) shouldn't affect smaller degrees of strength by distributivity.  But the collapse we're considering done at stage \(\kappa \) \emph{could} then resurrect degrees of strength.  So this gives the following result, the proof of which is given later with \hlink{cohenresurrectdegreeslater}.
	
	\begin{result}\label{cohenresurrectdegrees}
		It's consistent (relative to two srs cardinals and proper class of strong cardinals) that we can force a \(\kappa \) that is not \(\rho \)-strong to be \(\rho \)-strong by a collapse \(\ffrak{Col}(\kappa ^+,\rho )\) 
	\end{result}
	
	This problem isn't an issue when we only care about finding \emph{one} strong or supercompact cardinal with indestructibility properties: if we ever have a measurable between a cardinal and its remaining strength, we could have just cut off the universe and end up in the desired model; no need to deal with the resurrection.  This is essentially the argument given in \cite{aptersargsyan} and \cite{apterhamkins}.  Unfortunately for us, we can't just stop our preparation and declare success, and this is partially why resurrected degrees of strength remain a fact of life.
	
	Our partial ordering is a modification of the one found in the proof of \cite{aptersargsyan}.  Specifically, we define an iteration (of proper class length) \(\bigast_{\xi \in \mathrm{Ord}}\dot{\bbb{Q}}_\xi \) which begins by adding a Cohen subset of \(\omega \) to make use of gap forcing arguments.  In other words, \(\bbb{P}_{\omega +1}\cong\ffrak{Add}(\omega ,1)\).  All other nontrivial stages of forcing can only occur at inaccessible \(\delta \) that are \(\delta +2\)-strong in \(\ffrak{V}\).

	\begin{definition}[The Preparation]\label{theprepdef}
		Suppose \(\bbb{P}_\delta =\bigast_{\xi <\delta }\dot{\bbb{Q}}_\xi \) has been defined for \(\delta >\omega \). We aim to define \(\dot{\bbb{Q}}_\delta \).
		\begin{enumerate}
			\item If no \(p\in \bb{P}_\delta \) forces that \(\delta \) is inaccessible, then \(\dot{\bbb{Q}}_\delta =\dot{\bbone}\) is trivial.
			\item Otherwise, suppose \(\delta \) is forced by some \(p\in \bb{P}_\delta \) to be \(<\lambda \)-strong for some (maximal) \(\lambda \) (allowing \(\lambda =0\) if \(\delta \) isn't measurable, and \(\lambda =\mathrm{Ord}\) if strong), and work below \(p\).
			\begin{enumerate}
				\item If the \(<\lambda \)-strength of \(\delta \) is weakly indestructible via posets of rank \(<(\delta ^{+\P})^{\ffrak{V}}\), then define \(\dot{\bbb{Q}}_\delta =\dot{\bbone}\).
				\item Otherwise, we let \(\rho <\lambda \) be the minimal degree of \(\delta \)'s strength that is weakly destructible and let \(\dot{\bbb{Q}}_\delta \) be the lottery sum of all (what are forced to be) posets that take the form \(\dot{\bbb{B}}*\dot{\bbb{C}}\) where the following happen.
				\item \(\dot{\bbb{B}}\) is an \(\delta ,\rho \)-appro\-priate poset of rank \(<(\delta ^{+\P})^{\ffrak{V}}\), and
				\item In \(\ffrak{V}^{\bbb{P}_\delta *\dot{\bbb{B}}}\), if  \(\rho <\delta ^{+\sharp\sharp}\le |\dot{\bb{B}}|\), then \(\dot{\bbb{C}}\) is a name for \(\ffrak{Col}(\rho ^{+\sharp},|\dot{\bb{B}}|)\).  Otherwise \(\dot{\bbb{C}}\) is trivial.
			\end{enumerate}
		\end{enumerate}
		Using Easton support for limit stages, we write \(\bbb{P}=\bigast_{\xi \in \mathrm{Ord}}\dot{\bbb{Q}}_\xi \) for the class iteration and \(\dot{\bbb{R}}_{[\delta ,\lambda )}\cong\bigast_{\delta \le\xi <\lambda }\dot{\bbb{Q}}_\xi \) for the tail forcing of \(\bbb{P}_\lambda \) whenever \(\delta <\lambda \).
	\end{definition}
	
	Two remarks about this preparation: firstly, note that each \(\dot{\bbb{Q}}_\delta \) is  \(\delta ,\rho \)-appro\-priate for some \(\rho \) whenever it's non-trivial.  Secondly, it's not hard to see that we can regard \(\bbb{P}_\delta \subseteq \mathrm{V}\mid\delta ^{+\P}\) for any \(\delta \).  In fact, we can regard \(\bb{P}_\delta \subseteq \mathrm{V}\mid\delta \) whenever \(\dot{\bbb{Q}}_\delta \) is non-trivial, since \(\delta \) is therefore inaccessible and not collapsed by any previous stage, meaning every previous stage had a smaller cardinality and thus smaller rank.
	
	The collapsing poset used for each poset in the lottery is used to give better control over the tail forcing, and in particular to show \(\bbb{P}_{\delta }\) is \(\delta \)-cc whenever \(\delta \) is still inaccessible there.  Moreover, the collapse allows us to ensure that once we collapse a \emph{small} degree of strength with a small poset, the strength stays collapsed and won't be resurrected.  Larger degrees, however, we make no promises about.
	
	There are several standard facts of tail forcings we need to use in order to prove that the tail forcings of the preparation are strategically closed.  The use of this will be in showing that \(\dot{\bbb{R}}_{[\delta ,\lambda )}\) is \(\delta \)-appro\-priate and thus can be used in arguments with \(\delta ,\rho \)-appro\-priate posets since it is close to being one.  One background result used is the following~\cite{cummings}.
	
	\begin{theorem}\label{iterationtailclosure}
		Let \(\bbb{P}_\lambda '=\bigast_{\xi <\lambda }\dot{\bbb{Q}}_\xi '\) be a \(\lambda \)-stage iteration such that for some \(\delta <\lambda \) and some \(\mu \),
		\begin{enumerate}
			\item inverse limits or direct limits are taken at every limit stage;
			\item inverse limits are taken at every limit stage \(\ge\delta \) of cofinality \(<\mu \);
			\item \(\dot{\bbb{Q}}_\xi '\) is (forced to be) \(<\mu \)-strategi\-cally closed for every \(\xi \) with \(\delta \le \xi <\lambda \); and
			\item \(\bbb{P}_\delta '\) is \(\mu \)-cc.
		\end{enumerate}
		Therefore the tail forcing \(\dot{\bbb{R}}_{[\delta ,\lambda )}\) is (forced to be) \(<\mu \)-strategi\-cally closed.
	\end{theorem}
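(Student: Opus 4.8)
The plan is to verify the definition of \(<\mu \)-strategic closure directly. By \hlink{stratcloseddef}, it suffices to show that for every \(\gamma <\mu \) the tail \(\dot{\bbb{R}}_{[\delta ,\lambda )}\) is \(\le \gamma \)-strategi\-cally closed, i.e.\ that player \(\I\) has a winning strategy in \(\mathcal{G}_{\dot{\bbb{R}}_{[\delta ,\lambda )}}^{\gamma +1}\). So I fix \(\gamma <\mu \) and work in a generic extension \(\mathrm{V}[G_\delta ]\) by \(\bbb{P}_\delta '\), where the tail is realized as an honest poset. The whole idea is to assemble player \(\I\)'s strategy coordinate by coordinate out of the strategies witnessing hypothesis (3), using the support structure of the iteration to handle limit stages of the game.

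First I would fix, using (3), names \(\dot\sigma _\xi \) for \(\delta \le \xi <\lambda \), each forced by \(\bbb{P}_\xi '\) to be a winning strategy for \(\I\) in the length-\((\gamma +1)\) closure game of \(\dot{\bbb{Q}}_\xi '\). Player \(\I\)'s strategy in the tail game is then defined so that the projection of the play to each coordinate \(\xi \) is (forced to be) a run of \(\dot\sigma _\xi \) against the corresponding projection of \(\II\)'s moves. At a successor stage this is immediate: given \(\II\)'s condition \(p_{\alpha }\), player \(\I\) responds coordinatewise by the move \(\dot\sigma _\xi \) dictates, which is a legitimate \(\bbb{P}_\xi '\)-name. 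The real work is at a limit stage \(\eta \le \gamma \) of the game, where \(\I\) must produce a lower bound \(q\) for the decreasing sequence \(\langle p_\alpha :\alpha <\eta \rangle \) already played. I would build \(q\) by recursion on the iteration coordinate \(\xi \): having arranged that \(q\restriction \xi \) forces the coordinate play \(\langle p_\alpha (\xi ):\alpha <\eta \rangle \) to be a \(\dot\sigma _\xi \)-run of length \(\eta \), I let \(q(\xi )\) be a name for the lower bound that \(\dot\sigma _\xi \) supplies at stage \(\eta \); this exists because \(\eta \le \gamma <\mu \) and \(\dot\sigma _\xi \) wins the full game \(\mathcal{G}^{\gamma +1}\).

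The crux, and the main obstacle, is checking that this \(q\) is genuinely a condition, i.e.\ that its support is compatible with the prescribed limit structure, and that the construction really takes place inside \(\mathrm{V}[G_\delta ]\). The support of \(q\) is contained in \(\bigcup _{\alpha <\eta }\mathrm{supp}(p_\alpha )\), a union of fewer than \(\mu \) supports. At an iteration stage \(\xi \) where a direct limit is taken, hypothesis (2) forces \(\cof(\xi )\ge \mu \); and here is exactly where hypothesis (4) enters, since a \(\mu \)-cc poset preserves cofinalities \(\ge \mu \), so \(\cof^{\mathrm{V}[G_\delta ]}(\xi )\ge \mu \) as well. Thus \(\mathrm{supp}(q)\cap \xi \) is a union of fewer than \(\cof(\xi )\) sets each bounded below \(\xi \), hence bounded below \(\xi \), so \(q\restriction \xi \) lies in the direct limit; at the stages where an inverse limit is taken (permitted by (1)) no such bound is needed. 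This confirms that \(q\in \dot{\bbb{R}}_{[\delta ,\lambda )}\) is a lower bound, so \(\I\) wins \(\mathcal{G}^{\gamma +1}\), and as \(\gamma <\mu \) was arbitrary the tail is \(<\mu \)-strategi\-cally closed. The delicate points I expect to absorb the most care are the bookkeeping that lets a single strategy in \(\mathrm{V}[G_\delta ]\) simultaneously drive coordinate games that properly live in the intermediate extensions \(\mathrm{V}[G_\xi ]\), and the verification that the \(\mu\)-cc of \(\bbb{P}_\delta '\) keeps the \(\mathrm{V}\)-computed direct/inverse-limit pattern accurate in \(\mathrm{V}[G_\delta ]\); this is the standard treatment in \cite{cummings}.
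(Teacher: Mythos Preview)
The paper does not give its own proof of this statement: it is quoted as a background fact and attributed to \cite{cummings}, so there is nothing to compare against directly. Your sketch is the standard argument from that reference and is correct in outline: play coordinatewise using the fixed strategies \(\dot\sigma_\xi\), and at limit rounds of the game build the lower bound \(q\) by recursion on the iteration index, using hypothesis~(2) together with the \(\mu\)-cc of \(\bbb{P}_\delta'\) (hypothesis~(4)) to ensure that at each direct-limit stage \(\xi\ge\delta\) the union \(\bigcup_{\alpha<\eta}\mathrm{supp}(p_\alpha)\cap\xi\) of fewer than \(\mu\le\cof^{\mathrm{V}[G_\delta]}(\xi)\) bounded sets remains bounded. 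One small point worth making explicit in your write-up: you should also note that the strategy you describe really lives in \(\mathrm{V}[G_\delta]\) (not merely in some further extension), since the names \(\dot\sigma_\xi\) and the recursion defining \(q\) are all carried out there; you flag this as a delicate point but it is straightforward once stated.
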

	
	\begin{corollary}\label{prepmainforcingtail}
		Let \(\delta <\lambda \).  Therefore, the tail forcing of our preparation, \(\dot{\bbb{R}}_{[\delta ,\lambda )}\) is (forced to be) \(<\delta \)-strategi\-cally closed and \(\le\delta \)-distributive.
	\end{corollary}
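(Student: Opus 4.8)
The corollary is an application of the cited background result \hlink{iterationtailclosure}, taken with its parameter \(\mu \) equal to \(\delta \), so the real work is verifying the four hypotheses for our preparation. First I would reduce to a convenient starting point. Let \(\delta '\) be the least \(\xi \ge \delta \) at which \(\dot{\bbb{Q}}_\xi \) can be nontrivial, i.e.\ the least \(\xi \ge \delta \) forced inaccessible at stage \(\xi \); if no such \(\xi <\lambda \) exists, every stage of \(\dot{\bbb{R}}_{[\delta ,\lambda )}\) is trivial and the conclusion holds vacuously. Otherwise clause (1) of \hlink{theprepdef} makes every stage in \([\delta ,\delta ')\) trivial, so \(\dot{\bbb{R}}_{[\delta ,\lambda )}\) is forcing-equivalent to \(\dot{\bbb{R}}_{[\delta ',\lambda )}\); since \(\delta \le \delta '\), proving \(<\delta '\)-strategic closure and \(\le \delta '\)-distributivity of the latter suffices. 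Thus I may assume \(\delta =\delta '\) is forced inaccessible.

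Next I would check the hypotheses of \hlink{iterationtailclosure} with \(\mu =\delta \). Hypothesis (1) is immediate, since Easton support takes a direct or inverse limit at every limit stage. For (2), any limit stage \(\gamma \) with \(\delta <\gamma <\lambda \) and \(\operatorname{cf}(\gamma )<\delta \) is singular, hence not inaccessible, so Easton support takes an inverse limit there. For (3), trivial stages are vacuously \(<\delta \)-strategically closed, while a nontrivial \(\dot{\bbb{Q}}_\xi =\dot{\bbb{B}}*\dot{\bbb{C}}\) with \(\delta \le \xi <\lambda \) splits into the \(\xi ,\rho \)-appropriate factor \(\dot{\bbb{B}}\), which is \(<\xi \)-strategically closed and hence \(<\delta \)-strategically closed by \hlink{appropriatedef}, and the collapse \(\dot{\bbb{C}}=\ffrak{Col}(\rho ^{+\sharp},|\dot{\bbb{B}}|)\), which is \(<\rho ^{+\sharp}\)-closed. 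Since the minimal destructible degree satisfies \(\rho >\xi \ge \delta \) by \hlink{measureweakindes}, this collapse is \(<\delta \)-closed, so the two-step iteration \(\dot{\bbb{Q}}_\xi \) is \(<\delta \)-strategically closed.

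The main obstacle is hypothesis (4): that \(\bbb{P}_\delta \) is \(\delta \)-cc. This is exactly where the collapsing factors in the preparation earn their keep. Because \(\delta \) is forced inaccessible at stage \(\delta \), it is not collapsed by \(\bbb{P}_\delta \), and the collapses \(\dot{\bbb{C}}\) hold each earlier stage small, so every \(\bbb{P}_\gamma \) (\(\gamma <\delta \)) is forced to have size \(<\delta \); as direct limits are taken at the inaccessible \(\delta \) and \(\axm{GCH}\) holds, the standard \(\Delta \)-system argument gives the \(\delta \)-cc flagged in the discussion after \hlink{theprepdef}. With all four hypotheses in hand, \hlink{iterationtailclosure} yields that \(\dot{\bbb{R}}_{[\delta ,\lambda )}\) is \(<\delta \)-strategically closed.

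Finally I would upgrade \(<\delta \)-distributivity (which follows from \hlink{stratimpliesdistrib1}) to \(\le \delta \)-distributivity by splitting \(\dot{\bbb{R}}_{[\delta ,\lambda )}=\dot{\bbb{Q}}_\delta *\dot{\bbb{R}}_{[\delta +1,\lambda )}\). The first factor \(\dot{\bbb{Q}}_\delta \) is \(\le \delta \)-distributive, either trivially or because its appropriate factor is so by \hlink{appropriatedef} and its collapse factor is highly closed. The second factor is \(<\delta ''\)-strategically closed for \(\delta ''\) the least nontrivial stage above \(\delta \) (by the same application of \hlink{iterationtailclosure}), hence \(\le \delta \)-strategically closed as \(\delta <\delta ''\), and therefore \(\le \delta \)-distributive by the strategic-closure argument noted after \hlink{stratcloseddef}. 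A two-step iteration of \(\le \delta \)-distributive posets is \(\le \delta \)-distributive, completing the proof.
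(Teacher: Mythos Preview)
Your argument is correct and follows essentially the same route as the paper: reduce to a stage where \(\delta\) is (forced) inaccessible so that \(\bbb{P}_\delta\) is \(\delta\)-cc, verify the four hypotheses of \hlink{iterationtailclosure}, and then factor \(\dot{\bbb{R}}_{[\delta ,\lambda )}=\dot{\bbb{Q}}_\delta *\dot{\bbb{R}}_{(\delta ,\lambda )}\) to upgrade to \(\le\delta\)-distributivity. One small slip: a nontrivial \(\dot{\bbb{Q}}_\xi\) is not literally \(\dot{\bbb{B}}*\dot{\bbb{C}}\) but the lottery sum of such posets, so you should either note (as the paper does) that lottery sums of \(<\xi\)-strategically closed posets are \(<\xi\)-strategically closed, or phrase your analysis as taking place below a condition selecting a summand.
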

	\begin{proof}
		If \(\dot{\bbb{Q}}_\delta \) is non-trivial, then below the appropriate conditions, \(\delta \) is inaccessible in \(\ffrak{V}^{\bbb{P}_\delta }\) so we take a direct limit at stage \(\delta \).  It follows that \(\delta \) was not collapsed at some previous stage by a \(\kappa ,\rho \)-appro\-priate poset \(\bbb{Q}\) where \(\kappa <\delta \le\max(\rho ,|\bb{Q}|)\).  So for each \(\kappa <\delta \), \(\dot{\bbb{Q}}_\kappa \) is equivalent to a poset with rank \(<\delta \) and in fact we can regard \(\bb{P}_\delta \subseteq \mathrm{V}\mid\delta \) and so \(\bbb{P}_\delta \) is \(\delta \)-cc.  Using \hlink{iterationtailclosure}, Easton support iterations clearly take inverse or direct limits everywhere, and only direct limits at certain regular stages.  So at any \(\xi \ge\delta >\cof(\xi )\), we take an inverse limit.  Thus (1) and (2) hold from \hlink{iterationtailclosure}.  By hypothesis, (3) holds since lottery sums of \(<\kappa \)-strategi\-cally closed posets are \(<\kappa \)-strategi\-cally closed.  (4) holds since \(\bbb{P}_\delta \) is \(\delta \)-cc and thus \(\dot{\bbb{R}}_{[\delta ,\lambda )}\) is \(<\delta \)-strategi\-cally closed.
		
		If \(\dot{\bbb{Q}}_\delta \) is trivial, then below the appropriate conditions, the next non-trivial stage (if there is one) \(\dot{\bbb{Q}}_\mu \) has by the above argument that \(\dot{\bbb{R}}_{[\mu ,\lambda )}\cong\dot{\bbb{R}}_{[\delta ,\lambda )}\) is \(<\mu \)-strategi\-cally closed and hence \(<\delta \)-strategi\-cally closed.  If there is no non-trivial stage above \(\delta \), then the tail forcing is trivial and hence \(<\delta \)-strategi\-cally closed.
		
		For \(\le\delta \)-distributivity, note that \(\dot{\bbb{R}}_{[\delta ,\lambda )}\cong\dot{\bbb{Q}}_\delta *\dot{\bbb{R}}_{(\delta ,\lambda )}\) is the two-step iteration of two \(\le\delta \)-distributive posets.\qedhere
	\end{proof}
	
	Given that measurability is already indestructible, the next non-trivial stage of forcing after \(\delta \) occurs at stage \(\delta ^{+\sharp\sharp}\) at the earliest.
	
	\begin{lemma}\label{nextnontriv}
		The first non-trivial stage of forcing after any \(\kappa \in \mathrm{Ord}\) is at least \((\kappa ^{+\sharp\sharp})^{\ffrak{V}^{\bbb{P}_{\kappa +1}}}\) 
	\end{lemma}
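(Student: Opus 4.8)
The plan is to reduce the statement to two observations: that a non-trivial stage $\dot{\bbb{Q}}_\delta$ forces $\delta$ to be $\delta+2$-strong, and that for the \emph{first} such stage after $\kappa$ this strength is computed in a model agreeing with $\ffrak{V}^{\bbb{P}_{\kappa+1}}$.

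First I would show that any non-trivial $\dot{\bbb{Q}}_\delta$ forces $\delta$ to be $\delta+2$-strong in $\ffrak{V}^{\bbb{P}_\delta}$. By \hlink{theprepdef}, $\dot{\bbb{Q}}_\delta$ is non-trivial only in case 2(b), which produces a minimal weakly destructible degree $\rho<\lambda$ of $\delta$'s strength, witnessed by a $\delta,\rho$-appropriate poset of rank $<(\delta^{+\P})^{\ffrak{V}}$. By \hlink{measureweakindes}, $\delta$'s measurability—its $\delta+1$-strength—is weakly indestructible; since every degree $\le\delta+1$ is a consequence of measurability and a $\le\delta$-distributive poset adds no subsets of $\delta$, all degrees $\le\delta+1$ are likewise weakly indestructible (and the rank restriction only shrinks the class of potential destroyers, so cannot help). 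Hence $\rho\ge\delta+2$, and since $\rho<\lambda$ forces $\delta+2<\lambda$ while $\delta$ is $<\lambda$-strong, $\delta$ is indeed $\delta+2$-strong in $\ffrak{V}^{\bbb{P}_\delta}$.

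Next I would apply this to the least non-trivial stage. Let $\delta>\kappa$ be least with $\dot{\bbb{Q}}_\delta$ non-trivial. By minimality every $\dot{\bbb{Q}}_\xi$ with $\kappa<\xi<\delta$ is trivial, so the tail $\dot{\bbb{R}}_{[\kappa+1,\delta)}\cong\bigast_{\kappa+1\le\xi<\delta}\dot{\bbb{Q}}_\xi$ is (forced to be) equivalent to $\bbone$, irrespective of the Easton support taken at limits. Thus $\bbb{P}_\delta\cong\bbb{P}_{\kappa+1}*\dot{\bbb{R}}_{[\kappa+1,\delta)}$ has the same generic extensions as $\bbb{P}_{\kappa+1}$, so $\ffrak{V}^{\bbb{P}_\delta}$ and $\ffrak{V}^{\bbb{P}_{\kappa+1}}$ agree—in particular on whether $\delta$ is $\delta+2$-strong. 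By the previous paragraph $\delta$ is $\delta+2$-strong in $\ffrak{V}^{\bbb{P}_{\kappa+1}}$, and since $(\kappa^{+\sharp\sharp})^{\ffrak{V}^{\bbb{P}_{\kappa+1}}}$ is by definition the least cardinal $\mu>\kappa$ that is $\mu+2$-strong in that model, we conclude $\delta\ge(\kappa^{+\sharp\sharp})^{\ffrak{V}^{\bbb{P}_{\kappa+1}}}$.

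I expect the main obstacle to be the bookkeeping in the first step: checking that the weak indestructibility supplied by \hlink{measureweakindes} genuinely excludes \emph{every} degree $\le\delta+1$ as the destroyed degree, and that the rank bound $<(\delta^{+\P})^{\ffrak{V}}$ in \hlink{theprepdef} does not admit some overlooked small destroyer (which it cannot, since indestructibility against all $\le\delta$-distributive posets subsumes the rank-bounded ones). The transfer in the second step is routine, resting only on a product of trivial forcings being trivial, though one must phrase the ``first non-trivial stage'' claim uniformly below the conditions that pin down the maximal $\lambda$ in \hlink{theprepdef}.
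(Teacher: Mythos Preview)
Your proof is correct and follows essentially the same approach as the paper: the paper's argument is simply a terser version of yours, observing that weak destructibility of any degree of strength forces $\delta$ to be at least $\delta+2$-strong (implicitly using \hlink{measureweakindes}), and that for the first non-trivial stage $\delta>\kappa$ one has $\bbb{P}_\delta\cong\bbb{P}_{\kappa+1}$. Your version is more explicit about why $\rho\ge\delta+2$ and why the intermediate forcing is trivial, but the underlying reasoning is identical.
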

	\begin{proof}
		To have any degree of \(\delta \)'s strength weakly destructible over \(\ffrak{V}^{\bbb{P}_\delta }\), we require \(\delta \) to be at least \(\delta +2\)-strong.  Hence the least \(\delta \) such that \(\bbb{P}_{\delta }\cong\bbb{P}_{\kappa +1}\) but \(\dot{\bbb{Q}}_\delta \) is non-trivial must have that \(\delta \) is \(\delta +2\)-strong in \(\ffrak{V}^{\bbb{P}_\delta }=\ffrak{V}^{\bbb{P}_{\kappa +1}}\) and hence \(\delta \ge(\kappa ^{+\sharp\sharp})^{\ffrak{V}^{\bbb{P}_{\kappa +1}}}\).
	\end{proof}
	
	Hence all the degrees of \(\kappa \)'s strength below \((\kappa ^{+\sharp\sharp})^{\ffrak{V}^{\bbb{P}_{\kappa +1}}}\) remain indestructible even if larger degrees are accidentally resurrected.  The basic proof of this is that if we \emph{could} destroy something, we would have destroyed it already.
	
	\begin{result}\label{mainforcingwss}
		Let \(\kappa \) be such that \(\bbb{P}_\kappa \) and \(\bbb{P}_{\kappa +1}\Vdash \) \dblq{\(\check\kappa \) is \(\check\rho \)-strong for \(\check\rho <\kappa ^{+\sharp\sharp}\)}. Therefore \(\bbb{P}\Vdash \) \dblq{\(\check\kappa \)'s \(\rho \)-strength is weakly indestructible}.
	\end{result}
	\begin{proof}
		By downward absoluteness, in \(\ffrak{V}^{\bbb{P}_{\kappa }}\), \(\kappa \) is still inaccessible.  Hence we are in case (2) of \hlink{theprepdef}.  As \dblq{small} posets and by gap forcing~\cite{GapForcing}, \(\delta ^{+\sharp}\), \(\delta ^{+\sharp\sharp}\), and \(\delta ^{+\P}\) are all the same in \(\ffrak{V}\) and \(\ffrak{V}^{\bbb{P}_\kappa }\). (Such cardinals retain their large cardinal status by small forcing, and no new such cardinals are added above \(\delta \) by \cite{GapForcing}.)
		
		As a result, because \(\rho <\kappa ^{+\sharp\sharp}\) in \(\ffrak{V}^{\bbb{P}_{\kappa +1}}\), the tail forcing after \(\kappa \) is sufficiently distributive such that the \(\rho \)-strong extender on \(\kappa \) in \(\ffrak{V}^{\bbb{P}_{\kappa +1}}\) is still \(\rho \)-strong in \(\ffrak{V}^{\bbb{P}}\).  Thus it suffices to show weak indestructibility for this degree of strength. So suppose \(\dot{\bbb{Q}}\) is \(\kappa ,\rho \)-appro\-priate in \(\ffrak{V}^{\bbb{P}}\).  By distributivity of the tail forcing, \(\dot{\bbb{Q}}\in \mathrm{V}^{\bbb{P}_\lambda }\) for some \(\lambda >\kappa \).  The tail forcing \(\dot{\bbb{R}}_{(\kappa ,\lambda )}\) is therefore \(\kappa \)-appro\-priate by \hlink{prepmainforcingtail}.
		
		This tells us that \(\dot{\bbb{Q}}_\kappa \) must be non-trivial.  To see this, otherwise \(\kappa \) must be \(\rho \)-strong in \(\ffrak{V}^{\bbb{P}_{\kappa +1}}=\ffrak{V}^{\bbb{P}_\kappa }\).  It follows that \(\dot{\bbb{R}}_{[\kappa ,\lambda )}*\dot{\bbb{Q}}\) is \(\kappa ,\rho \)-appro\-priate in \(\ffrak{V}^{\bbb{P}_\kappa }\) so that by \hlink{appropriatesmall}, we can find a \(\kappa ,\rho \)-appro\-priate poset of small size and so \(\dot{\bbb{Q}}_\kappa \) should be non-trivial, a contradiction.
		
		Since \(\dot{\bbb{Q}}_\kappa \) is non-trivial, by \hlink{theprepdef}, we forced with some poset \(\dot{\bbb{B}}*\dot{\bbb{C}}\) at stage \(\kappa \) where \(\dot{\bbb{B}}\) is \(\kappa ,\rho _\kappa \)-appro\-priate in \(\ffrak{V}^{\bbb{P}_\kappa }\) with minimal \(\rho _\kappa \). If \(\rho <\rho _\kappa \), the \(\kappa ,\rho \)-appro\-priate \(\dot{\bbb{R}}_{[\kappa ,\lambda )}*\dot{\bbb{Q}}\) would violate minimality of \(\rho _\kappa \) (via \hlink{appropriatesmall} to ensure we stay below \((\delta ^{+\P})^{\ffrak{V}}\) in \(\ffrak{V}^{\bbb{P}_{\kappa }}\)).  So \(\rho \ge\rho _\kappa \).  We now break into cases.
		\begin{itemize}
			\item Suppose \(\dot{\bbb{C}}\) is non-trivial.  Thus \(\rho _\kappa <\kappa ^{+\sharp\sharp}\le|\dot{\bbb{B}}|\) and \(\dot{\bbb{C}}\) collapses \(|\dot{\bbb{B}}|\) to be \(\rho ^{+\sharp}\) (which is \(<\kappa ^{+\sharp\sharp}\)) in a way that is \(<\rho ^{+\sharp}\)-distributive.  In particular, this preserves the lack of \(\rho _\kappa \)-strong extenders, and so the lack of \(\rho \)-strong extenders in \(\ffrak{V}^{\bbb{P}_\kappa *\dot{\bbb{B}}}\) to \(\ffrak{V}^{\bbb{P}_\kappa *\dot{\bbb{B}}*\dot{\bbb{C}}}\), contradicting that \(\kappa \) is \(\rho \)-strong in \(\ffrak{V}^{\bbb{P}_{\kappa +1}}\)
			\item Suppose \(\dot{\bbb{C}}\) is trivial so that because \(\dot{\bbb{B}}\) is \(\kappa ,\rho _\kappa \)-appro\-priate, \(\kappa \) is not \(\rho _\kappa \)-strong in \(\ffrak{V}^{\bbb{P}_{\kappa }*\dot{\bbb{B}}}=\ffrak{V}^{\bbb{P}_{\kappa +1}}\) and hence not \(\rho \)-strong there, a contradiction.\qedhere
		\end{itemize}
	\end{proof}
	
	Unfortunately, this does not guarantee weak indestructibility for \(\kappa \)'s degrees of strength \(<\kappa ^{\sharp\sharp}\) in \(\ffrak{V}^{\bbb{P}}\), but merely for strength \(<(\kappa ^{\sharp\sharp})^{\ffrak{V}^{\bbb{P}_{\kappa +1}}}\) and so in particular \(\kappa +2\)-strength, \(\kappa ^{+\sharp}\)-strength, and \(<\lambda \)-strength for the first \(\lambda \) a measurable limit of measurables in \(\ffrak{V}^{\bbb{P}}\).
	
	\begin{corollary}[Forcing \(\axm{UWISS}\)]\label{PforcesUWISS}
		Suppose \(\ffrak{V}\vDash \) \dblq{there is a proper class of strongs}.  Therefore
		\begin{itemize}
			\item the preparation \(\bbb{P}\) is well defined and \(\bbb{P}\Vdash \axm{UWISS}\).
			\item In fact, if \(\kappa \) is \(\rho \)-strong for \(\rho <(\kappa ^{+\sharp\sharp})^{\ffrak{V}^{\bbb{P}_{\kappa +1}}}\) in \(\ffrak{V}^{\bbb{P}}\) then this strength is weakly indestructible in \(\ffrak{V}^{\bbb{P}}\).
			\item In particular, any cardinal \(\kappa \)'s strength that is below the least measurable limit of measurables above \(\kappa \) is weakly indestructible, e.g.\ \(\kappa +2\)-strength, \(\kappa ^{+\sharp}\)-strength, \((\kappa ^{+\sharp})^{+\sharp}\)-strength, etc.
		\end{itemize}
	\end{corollary}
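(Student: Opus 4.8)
The plan is to derive all three assertions from \hlink{mainforcingwss}, whose hypotheses speak about \(\kappa \)'s strength in \(\ffrak{V}^{\bbb{P}_\kappa }\) and \(\ffrak{V}^{\bbb{P}_{\kappa +1}}\), by transferring \(\rho \)-strength in the full extension \(\ffrak{V}^{\bbb{P}}\) down to these initial stages whenever \(\rho \) is small. Well-definedness comes first: as \(\ffrak{V}\) has a proper class of strongs, \(\delta ^{+\P}\) exists for every \(\delta \), so the rank bound \((\delta ^{+\P})^{\ffrak{V}}\) in \hlink{theprepdef} makes the lottery at each stage a set, and \hlink{appropriatesmall} shows this bounded search costs nothing; Easton support then yields a well-defined class iteration.

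For the second bullet, suppose \(\kappa \) is \(\rho \)-strong in \(\ffrak{V}^{\bbb{P}}\) with \(\rho <\mu :=(\kappa ^{+\sharp\sharp})^{\ffrak{V}^{\bbb{P}_{\kappa +1}}}\), and recall that \(\mu \) is measurable, hence inaccessible, and under \(\axm{GCH}\) a strong limit. First I would push \(\rho \)-strength down to \(\ffrak{V}^{\bbb{P}_{\kappa +1}}\): by \hlink{nextnontriv} the tail \(\dot{\bbb{R}}_{[\kappa +1,\mathrm{Ord})}\) is trivial below \(\mu \), and by \hlink{prepmainforcingtail} it is \(<\mu \)-strategically closed, hence \(<\mu \)-distributive. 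Since \(\mu \) is a strong limit and \(\rho <\mu \), this tail changes neither the rank-\(\rho \) initial segments of the two models nor any \((\kappa ,\rho )\)-extender (which has size \(<\mu \) under \(\axm{GCH}\)), so the extender witnessing \(\rho \)-strength in \(\ffrak{V}^{\bbb{P}}\) already lies in \(\ffrak{V}^{\bbb{P}_{\kappa +1}}\) and still witnesses it there. Next I would obtain \(\rho \)-strength in \(\ffrak{V}^{\bbb{P}_\kappa }\): if \(\dot{\bbb{Q}}_\kappa \) is trivial this is immediate, and otherwise, writing \(\rho _\kappa \) for the minimal destructible degree and \(\lambda \) for the maximal degree of \(\kappa \)'s strength in \(\ffrak{V}^{\bbb{P}_\kappa }\) as in \hlink{theprepdef}, the survival of \(\rho \)-strength forces \(\rho <\rho _\kappa <\lambda \), whence \(\kappa \) being \(<\lambda \)-strong in \(\ffrak{V}^{\bbb{P}_\kappa }\) already gives \(\rho \)-strength there. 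Gap forcing \cite{GapForcing} keeps \(\kappa ^{+\sharp\sharp}\) fixed across \(\ffrak{V}^{\bbb{P}_\kappa }\) and \(\ffrak{V}^{\bbb{P}_{\kappa +1}}\), so \(\rho <\kappa ^{+\sharp\sharp}\) holds in both; \hlink{mainforcingwss} then applies and yields weak indestructibility of \(\kappa \)'s \(\rho \)-strength in \(\ffrak{V}^{\bbb{P}}\).

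The first and third bullets are specializations. \(\axm{UWISS}\) is the case \(\rho =\kappa +2\): since \(\mu >\kappa \) is a cardinal we have \(\kappa +2<\kappa ^+\le\mu \), so the second bullet covers every \(\kappa +2\)-strong \(\kappa \). For the ``in particular'' claim I would verify that the least measurable limit of measurables above \(\kappa \) in \(\ffrak{V}^{\bbb{P}}\) is at most \(\mu \): the tail is trivial below \(\mu \) and \(<\mu \)-distributive above it, so \(\ffrak{V}^{\bbb{P}}\) and \(\ffrak{V}^{\bbb{P}_{\kappa +1}}\) share their measurables below \(\mu \) and \(\mu \) remains a limit of measurables; and since measurability is never the minimal destructible degree by \hlink{measureweakindes}, the stage-\(\mu \) forcing leaves \(\mu \) measurable, a status preserved by the distributive remainder. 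Thus \(\mu \) is itself a measurable limit of measurables in \(\ffrak{V}^{\bbb{P}}\), so any \(\rho \) below the least such cardinal has \(\rho <\mu \), and the second bullet applies, covering \(\kappa +2\)-, \(\kappa ^{+\sharp}\)-, and \((\kappa ^{+\sharp})^{+\sharp}\)-strength.

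The crux is the transfer in the second paragraph: one must ensure that below \(\mu =\kappa ^{+\sharp\sharp}\) the tail \(\dot{\bbb{R}}_{[\kappa +1,\mathrm{Ord})}\) does not resurrect \(\rho \)-strength and the stage-\(\kappa \) forcing \(\dot{\bbb{Q}}_\kappa \) does not create it, so that \(\ffrak{V}^{\bbb{P}}\)-strength faithfully reflects the \(\ffrak{V}^{\bbb{P}_\kappa }\)- and \(\ffrak{V}^{\bbb{P}_{\kappa +1}}\)-strength demanded by \hlink{mainforcingwss}. Triviality of the tail below \(\mu \) together with the strong-limit inaccessibility of \(\mu \) (bounding extender sizes under \(\axm{GCH}\)) rules out resurrection, while the ``maximal \(\lambda \)'' design of the preparation---that a non-trivial \(\dot{\bbb{Q}}_\kappa \) destroys only a degree strictly above the degrees it leaves intact---rules out creation.
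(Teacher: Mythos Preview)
Your proof is correct and follows essentially the same route as the paper: pull \(\rho\)-strength back from \(\ffrak{V}^{\bbb{P}}\) to \(\ffrak{V}^{\bbb{P}_{\kappa+1}}\) via distributivity of the tail (trivial below \(\mu=(\kappa^{+\sharp\sharp})^{\ffrak{V}^{\bbb{P}_{\kappa+1}}}\)), then invoke \hlink{mainforcingwss}. The only organizational differences are that the paper routes through an auxiliary inaccessible \(\gamma\) and argues by contradiction, whereas you verify the \(\ffrak{V}^{\bbb{P}_\kappa}\)-hypothesis of \hlink{mainforcingwss} explicitly and give a fuller justification that \(\mu\) remains a measurable limit of measurables in \(\ffrak{V}^{\bbb{P}}\); both are harmless elaborations of the same argument.
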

	\begin{proof}
		If \(\kappa \) is measurable in \(\ffrak{V}^{\bbb{P}}\), then this degree of strength is weakly indestructible. So suppose \(\kappa \) is stronger than a measurable, but there is some \(\dot{\bbb{Q}}\in \mathrm{V}^{\bbb{P}}\) that is \(\kappa ,\rho \)-appro\-priate for some \(\rho <(\kappa ^{+\sharp\sharp})^{\ffrak{V}^{\bbb{P}_{\kappa +1}}}\), which has some rank below an inaccessible \(\gamma >\rho ,\rank(\dot{\bbb{Q}})\).  The tail forcing \(\dot{\bbb{R}}_{[\gamma ,\infty )}\) is \(\le\gamma \)-distributive by \hlink{prepmainforcingtail} and hence \(\dot{\bbb{Q}}\in \mathrm{V}^{\bbb{P}_{\gamma }}\) and \(\ffrak{V}^{\bbb{P}_\gamma }\mid\gamma =\ffrak{V}^{\bbb{P}}\mid\gamma \).  So by \hlink{appropriatereflect}, \(\dot{\bbb{Q}}\) is \(\kappa ,\rho \)-appro\-priate in \(\ffrak{V}^{\bbb{P}_\gamma }\).  The tail forcing \(\dot{\bbb{R}}_{(\kappa ,\gamma )}\) is sufficiently distributive by \hlink{nextnontriv} to show \(\kappa \) was \(\rho \)-strong in \(\ffrak{V}^{\bbb{P}_{\kappa +1}}\) and so \(\dot{\bbb{R}}_{(\kappa ,\lambda )}*\dot{\bbb{Q}}\) is \(\kappa ,\rho \)-appro\-priate there, contradicting \hlink{mainforcingwss}.
		
		By \hlink{nextnontriv}, measurables between \(\kappa \) and \((\kappa ^{+\sharp\sharp})^{\ffrak{V}^{\bbb{P}_{\kappa +1}}}\) are preserved by the tail forcing by \hlink{nextnontriv}.  So it follows that every cardinal below the least measurable limit of measurables above \(\kappa \)—being below \((\kappa ^{+\sharp\sharp})^{\ffrak{V}^{\bbb{P}_{\kappa +1}}}\)—is indestructible.\qedhere
	\end{proof}
	
	We may not get better than this, since we could have the following: in \(\ffrak{V}^{\bbb{P}_{\kappa +1}}\), \(\kappa \) is \(\rho \)-strong for \(\rho >\kappa ^{+\sharp\sharp}\).  Then at stage \(\mu =(\kappa ^{+\sharp\sharp})^{\ffrak{V}^{\bbb{P}_{\kappa +1}}}\) we do some non-trivial forcing that destroys \(\mu \)'s \(\mu +2\)-strength, and subsequently resurrects \(\kappa \)'s \(\rho '\)-strength for \(\rho '>\mu \) in a way that the new \((\kappa ^{+\sharp\sharp})^{\ffrak{V}^{\bbb{P}}}>\rho '\) but \(\kappa \)'s \(\rho '\)-strength is now destructible.  Nevertheless, this doesn't affect somewhat small degrees of strength as the above shows.
	
	\subsection{A proper class of strongs}

	So all that remains is to show that srs cardinals of \(\ffrak{V}\) are strong in \(\ffrak{V}^{\bbb{P}}\).  We do this by working with partial degrees of reflecting strongs.  The usefulness of reflecting strongs allows us to properly calculate the preparation up to limits of strongs.
	
	\begin{lemma}\label{mainforcingcl2}
		Let \(\lambda \) be a limit of strongs. Let \(j:\mathrm{V}\rightarrow \mathrm{M}\) be at least \(\lambda \)-strong such that \(\ffrak{V}\) and \(\ffrak{M}\) agree on strongs \(<\lambda \).  Therefore \(\bbb{P}_{\lambda }^{ \ffrak{V}}=j(\bbb{P})_{\lambda }=\bbb{P}^{\ffrak{M}}_{\lambda }\).
	\end{lemma}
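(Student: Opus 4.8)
The plan is to prove by transfinite induction on $\delta\le\lambda$ that $\bbb{P}_\delta^{\ffrak{V}}=\bbb{P}_\delta^{\ffrak{M}}$, with the names $\dot{\bbb{Q}}_\xi$ computed identically on both sides for every $\xi<\delta$. The equality $j(\bbb{P})_\lambda=\bbb{P}_\lambda^{\ffrak{M}}$ is then immediate, since $\bbb{P}$ is given by the single recursion of \hlink{theprepdef}, so its interpretation $j(\bbb{P})$ in $\ffrak{M}$ is literally $\bbb{P}^{\ffrak{M}}$. Throughout I will lean on three consequences of the hypotheses: since $j$ is at least $\lambda$-strong, $\ffrak{V}\mid\lambda=\ffrak{M}\mid\lambda$; since $\lambda$ is a limit of strongs, $(\delta^{+\P})^{\ffrak{V}}<\lambda$ for every $\delta<\lambda$; and since $\ffrak{V}$ and $\ffrak{M}$ agree on which $\xi<\lambda$ are strong, $(\delta^{+\P})^{\ffrak{V}}=(\delta^{+\P})^{\ffrak{M}}<\lambda$ for each such $\delta$.

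The heart of the argument is the successor step. Assuming $\bbb{P}_\delta^{\ffrak{V}}=\bbb{P}_\delta^{\ffrak{M}}$ for some $\delta<\lambda$, I must check that $\dot{\bbb{Q}}_\delta$ is defined identically in the two models. By the second remark after \hlink{theprepdef}, $\bbb{P}_\delta\subseteq\ffrak{V}\mid(\delta^{+\P})^{\ffrak{V}}\subseteq\ffrak{V}\mid\lambda=\ffrak{M}\mid\lambda$, so the iteration so far is a common object, and whether a condition forces $\delta$ inaccessible is decided by objects of rank below $\lambda$, hence absolutely. In case (2) of \hlink{theprepdef} every choice is made relative to the parameter $(\delta^{+\P})^{\ffrak{V}}=(\delta^{+\P})^{\ffrak{M}}$: one asks whether some degree of strength $\rho$ is weakly destructible by a $\delta,\rho$-appropriate poset of rank $<\delta^{+\P}$, takes the minimal such $\rho<\delta^{+\P}<\lambda$, and forms the lottery of the relevant $\dot{\bbb{B}}*\dot{\bbb{C}}$. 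By \hlink{appropriatesmall} this search may be confined to posets of rank below $\delta^{+\P}$, and by \hlink{appropriatereflect} whether such a small poset is $\delta,\rho$-appropriate is settled inside $\ffrak{V}\mid\mu$ for some $\mu<\lambda$ (using that $\lambda$ is a strong limit, so the relevant $|{\cdots}|^+$ stays below $\lambda$). As $\ffrak{V}\mid\lambda=\ffrak{M}\mid\lambda$, the existence of small appropriate posets, the value of $\rho$, and the membership of the lottery all coincide. Finally $\dot{\bbb{C}}$ depends only on $\rho$, $|\dot{\bbb{B}}|$, and $\delta^{+\sharp\sharp}$; since every strong cardinal is in particular $\kappa+2$-strong we have $\delta^{+\sharp\sharp}\le\delta^{+\P}<\lambda$, and $\delta^{+\sharp\sharp}$ is absolute because $\kappa+2$-strength for $\kappa<\lambda$ is witnessed by extenders of rank below $\lambda$. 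Hence $\dot{\bbb{Q}}_\delta^{\ffrak{V}}=\dot{\bbb{Q}}_\delta^{\ffrak{M}}$.

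For the limit step at any limit $\eta\le\lambda$, both models use Easton support, and whether a stage is (weakly) inaccessible—which dictates where direct versus inverse limits are taken—is absolute because $\ffrak{V}\mid\lambda=\ffrak{M}\mid\lambda$ forces the two models to share cardinalities and cofinalities below $\lambda$; since the individual stages already agree, so does $\bbb{P}_\eta$. This closes the induction and yields $\bbb{P}_\lambda^{\ffrak{V}}=\bbb{P}_\lambda^{\ffrak{M}}=j(\bbb{P})_\lambda$. The main obstacle is precisely the bookkeeping in the successor step: one must verify that every datum feeding into the definition of $\dot{\bbb{Q}}_\delta$—the inaccessibility and degrees of strength of $\delta$ in $\ffrak{V}^{\bbb{P}_\delta}$, the destructibility of those degrees, and the bounding cardinals $\delta^{+\P}$ and $\delta^{+\sharp\sharp}$—is witnessed strictly below $\lambda$. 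This is exactly where the assumption that $\lambda$ is a limit of strongs (giving $\delta^{+\P}<\lambda$), together with \hlink{appropriatesmall} and \hlink{appropriatereflect} keeping the relevant posets small, does the real work.
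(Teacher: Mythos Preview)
Your proof is correct and follows essentially the same route as the paper's: induction on $\delta<\lambda$ showing $\dot{\bbb{Q}}_\delta^{\ffrak{V}}=\dot{\bbb{Q}}_\delta^{\ffrak{M}}$, using that $(\delta^{+\P})^{\ffrak{V}}=(\delta^{+\P})^{\ffrak{M}}<\lambda$ and that \hlink{appropriatereflect} localizes the definition of each stage inside $\ffrak{V}\mid\lambda=\ffrak{M}\mid\lambda$. Your treatment is in fact slightly more explicit than the paper's in two places---you spell out the limit step for Easton support, and you invoke \hlink{appropriatesmall} to justify that the minimal destructible degree $\rho$ is below $\delta^{+\P}$---but neither constitutes a genuinely different approach.
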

	\begin{proof}
		Let \(\kappa =\cp(j)\).  \(j\) will not screw with the Easton support below \(\lambda \), so it suffices to show \(\dot{\bbb{Q}}_\delta ^{\ffrak{V}}=\dot{\bbb{Q}}_\delta ^{\ffrak{M}}\) for all cardinals \(\delta <\lambda \).  Proceed by induction on \(\delta \).  In case (1) of \hlink{theprepdef}, \(\delta \)'s non-inaccessibility is easily absolute between \(\ffrak{V}\) and \(\ffrak{M}\) since inductively \(\bbb{P}_\delta =\bbb{P}_\delta ^{\ffrak{M}}\).
		
		Note that since \(\lambda \) is a limit of strongs, if \(\delta <\lambda \), then \((\delta ^{+\P})^{\ffrak{V}}=(\delta ^{+\P})^{\ffrak{M}}\) and so we can unambiguously write \(\delta ^{+\P}\) in such cases.  Note also if \(\delta \) is \(<\lambda _0\)-strong in \(\ffrak{V}\), then either \(\lambda _0=\delta ^{+\P}<\lambda \) implies \(\delta \) is strong in both since the models agree on strongs below \(\lambda \), or else \(\lambda _0<\delta ^{+\P}<\lambda \) and so the lack of \(\lambda _0\)-strong extenders in \(\ffrak{V}\mid\lambda \) matches with \(\ffrak{M}\mid\lambda \).  Note also that all (names for) collapses we consider will exist in \(\mathrm{V}\mid\delta ^{+\P}=\mathrm{M}\mid\delta ^{+\P}\) and thus have the same interpretation in both models.
		
		In case (2), by \hlink{appropriatereflect}, the existence of \(\rho <\lambda _0\le \delta ^{+\P}<\lambda \) and a \(<\delta \)-strategi\-cally closed, \(\le\delta \)-distributive \(\dot{\bbb{B}}\in \mathrm{V}\mid\delta ^{+\P}\) such that \(\delta \) isn't \(\rho \)-strong after forcing with \(\bbb{P}_\delta *\dot{\bbb{B}}\) can be calculated in \(\ffrak{V}\mid\lambda =\ffrak{M}\mid\lambda \). Hence the two share the same such posets, and moreover, the minimal \(\rho \) witnessing this is the same for both. The calculation of \(\delta ^{+\sharp\sharp}\) in both will be below \(\delta ^{+\P}\) and easily the same in both models.  The collapse (also being below \(\delta ^{+\P}\)) will also be the same, meaning \(\dot{\bbb{Q}}_\delta \) is the same in both.\qedhere
	\end{proof}
	In particular, if \(j:\mathrm{V}\rightarrow \mathrm{M}\) is a \(\lambda \)-srs embedding, then \(\bbb{P}_{\lambda }^{\ffrak{V}}=\bbb{P}_{\lambda }^{\ffrak{M}}\) whenever \(\lambda \) is a limit of strongs.  This gives us the edge over hyperstrength in generalizing \cite{aptersargsyan} which generally only gets agreement for \(\bbb{P}_{\cp(j)}\), but the resulting argument is adapted from \cite{aptersargsyan}.
	
	\begin{lemma}\label{mainforcingmainlem}
		Let \(\kappa \) be strong and \(\lambda \)-srs where \(\lambda \) is a limit of strongs.  Therefore \(\bbb{P}_\lambda \Vdash \) \dblq{\(\kappa \) is \(\lambda \)-strong}.
	\end{lemma}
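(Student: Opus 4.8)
The plan is a lifting argument: I would lift a \(\lambda\)-srs embedding through \(\bbb{P}_\lambda\). Fix a \(\lambda\)-srs embedding \(j:\mathrm{V}\rightarrow \mathrm{M}=\Ult_E(\mathrm{V})\) with \(\cp(j)=\kappa \), \(j(\kappa )>\lambda \), \(\ffrak{V}\mid\lambda =\ffrak{M}\mid\lambda \), and \(\ffrak{V}\), \(\ffrak{M}\) agreeing on strongs below \(\lambda \). Since \(\lambda \) is a limit of strongs, \hlink{mainforcingcl2} gives \(\bbb{P}_\lambda ^{\ffrak{V}}=\bbb{P}_\lambda ^{\ffrak{M}}\), so \(j(\bbb{P}_\lambda )=\bbb{P}_{j(\lambda )}^{\ffrak{M}}\) factors in \(\ffrak{M}\) as \(\bbb{P}_\lambda *\dot{\bbb{R}}\), where \(\dot{\bbb{R}}=\dot{\bbb{R}}^{\ffrak{M}}_{[\lambda ,j(\lambda ))}\) is the \(\ffrak{M}\)-tail. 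Let \(G\) be \(\bbb{P}_\lambda \)-generic over \(\ffrak{V}\). The goal is to produce an \(H\) that is \(\dot{\bbb{R}}_G\)-generic over \(\ffrak{M}[G]\) with \(j''G\subseteq G*H\); by the usual lifting criterion this yields \(j^+:\mathrm{V}[G]\rightarrow \mathrm{M}[G*H]\) defined by \(j^+(\tau _G)=j(\tau )_{G*H}\).

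First I would compute \(j''G\). For \(p\in G\), every coordinate \(\xi \) in the support of \(p\) with \(\xi \ge\kappa \) has \(j(\xi )\ge j(\kappa )>\lambda \), while \(j\upharpoonright\kappa =\id\); hence \(\dom(j(p))\cap [\kappa ,\lambda )=\emptyset \), so that \(j(p)\upharpoonright\lambda =p\upharpoonright\kappa \in G\), and the remaining \dblq{tail part} of \(j(p)\) lands in \([\lambda ,j(\lambda ))\). Thus \(j''G\subseteq G*H\) reduces to arranging that \(H\) contains the directed family \(\{j(p)\upharpoonright[\lambda ,j(\lambda )):p\in G\}\) of tail parts. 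Because \hlink{prepmainforcingtail} (applied in \(\ffrak{M}\)) makes \(\dot{\bbb{R}}\) at least \(<\lambda \)-strategi\-cally closed—and indeed \(\le\lambda \)-strategi\-cally closed when the first nontrivial tail stage exceeds \(\lambda \)—this family should admit a master condition captured inside a suitable hull, to be built into \(H\).

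The central difficulty—and the step I expect to be the main obstacle—is making \(H\) generic over \emph{all} of \(\ffrak{M}[G]\), which has \(j(\lambda )>\lambda \) many dense subsets of \(\dot{\bbb{R}}_G\), whereas \(\ffrak{M}\) is not closed enough in \(\ffrak{V}[G]\) to diagonalize against them directly. The remedy is the hull from the outline: take \(\mathrm{N}\prec \mathrm{M}\) of size \(\lambda \), closed under \(<\lambda \)-sequences, with \(\lambda +1\subseteq \mathrm{N}\), \(\dot{\bbb{R}}\in \mathrm{N}\), and a winning strategy for the closure game on \(\dot{\bbb{R}}\) lying in \(\mathrm{N}\). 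Then \(\mathrm{N}[G]\) has only \(\lambda \) many dense subsets of \(\dot{\bbb{R}}_G\), so I would run a recursion of length \(\lambda \) in \(\ffrak{V}[G]\): at successor steps meet the next dense set of \(\mathrm{N}[G]\) and descend below the next element of the directed family \(j''G\), and at limits \(<\lambda \) apply the strategy to continue. This produces a filter \(H\) that is \(\mathrm{N}[G]\)-generic and contains the tail parts of \(j''G\).

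It then remains to upgrade genericity from \(\mathrm{N}[G]\) to \(\mathrm{M}[G]\); this is where the strategic closure of \(\dot{\bbb{R}}\) computed \emph{within} \(\ffrak{M}[G]\) is essential. Each dense \(D\in \mathrm{M}[G]\) has an extender representation \(D=j(\vec D)_G(s)\) for some \(s\in [\lambda ]^{<\omega }\) and \(\vec D\in \mathrm{V}[G]\); since \(\mathrm{N}\prec \mathrm{M}\) captures the relevant \(\vec D\) and \(\dot{\bbb{R}}\) is \(\le\lambda \)-distributive in \(\ffrak{M}[G]\) by \hlink{prepmainforcingtail}, such \(D\) are represented on the relevant coordinates by dense sets already in \(\mathrm{N}[G]\), so \(H\) meets them. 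With \(j''G\subseteq G*H\) secured, the lift \(j^+:\mathrm{V}[G]\rightarrow \mathrm{M}[G*H]\) is well defined with \(\cp(j^+)=\kappa \) and \(j^+(\kappa )=j(\kappa )>\lambda \). Finally, since \(\ffrak{V}\mid\lambda =\ffrak{M}\mid\lambda \), \(\bbb{P}_\lambda ^{\ffrak{V}}=\bbb{P}_\lambda ^{\ffrak{M}}\subseteq \mathrm{V}\mid\lambda \), and \(\dot{\bbb{R}}\) adds no new subsets of any \(\gamma \le\lambda \) (being \(\le\lambda \)-distributive with first nontrivial stage \(\ge\lambda \)), we get \(\ffrak{V}[G]\mid\lambda =\ffrak{M}[G*H]\mid\lambda \). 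Hence \(j^+\) witnesses that \(\kappa \) is \(\lambda \)-strong in \(\ffrak{V}[G]=\ffrak{V}^{\bbb{P}_\lambda }\), which is what we wanted.
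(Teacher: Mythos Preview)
Your overall plan is right, but there is a genuine gap in the closure bookkeeping that makes the recursion fail as written. You take \(N\prec M\) of size \(\lambda\), closed under \(<\lambda\)-sequences, and then propose to run a length-\(\lambda\) recursion in \(\ffrak{V}[G]\) using the strategy for the closure game on the tail. The problem is that \(M=\Ult_E(\mathrm{V})\) for a \((\kappa,\lambda)\)-extender is only closed under \(\kappa\)-sequences of \(\ffrak{V}\), not \(<\lambda\)-sequences. Consequently the \(\le\lambda\) (in fact \(\lambda^+\)) strategic closure of the tail, which holds in \(\ffrak{M}[G]\), translates to only \(\kappa^+\)-strategic closure in \(\ffrak{V}[G]\): at limit stages \(\ge\kappa^+\) your descending sequence is not visible to \(\ffrak{M}[G]\), so the strategy (which lives there) cannot be applied. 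Your length-\(\lambda\) construction therefore breaks at stage \(\kappa^+\), and a hull with \(\lambda\) many dense sets is too large to diagonalize against. The ``master condition'' remark has the same defect: the family \(\{j(p)\upharpoonright[\lambda,j(\lambda)):p\in G\}\) has size \(\lambda\), and you do not have enough closure in \(\ffrak{V}[G]\) to thread it.

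The paper fixes this in two moves you are missing. First, it uses a \emph{seed hull} \(N=\{j(f)(r_0\cup r_1,\kappa,\lambda):f\colon[\kappa]^{<\omega}\to\mathrm{V}\}\), which has only \(2^\kappa=\kappa^+\) many dense subsets of the tail; this matches the \(\kappa^+\)-strategic closure actually available in \(\ffrak{V}[G]\), so the recursion goes through. Genericity over \(\ffrak{M}[G]\) is then recovered by intersecting \(\{j(f)(s):s\in[\lambda]^{<\omega}\}\) inside \(\ffrak{M}[G]\) using \(\lambda^+\)-distributivity there. Second, the paper \emph{splits} the tail as \(\dot{\bbb{R}}^{\ffrak{M}}_{[\lambda,j(\kappa))}*\dot{\bbb{R}}^{\ffrak{M}}_{[j(\kappa),j(\lambda))}\): on \([\lambda,j(\kappa))\), \(j''G\) is trivial (direct limit at \(\kappa\)), so \(H_0\) is built freely by the \(\kappa^+\)-recursion; on \([j(\kappa),j(\lambda))\), \(H_1\) is instead the filter \emph{generated} by \(j''G_1\), and its genericity is proved by a separate \(\le\kappa\)-distributivity argument carried out in \(\ffrak{V}[G_0]\). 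You also need the paper's preliminary step of arranging (via one more ultrapower) that \(\lambda\) is not \(\lambda+2\)-strong in \(\ffrak{M}\), so that \(\dot{\bbb{Q}}^{\ffrak{M}}_\lambda\) is trivial and the tail really is \(\lambda^+\)-strategically closed in \(\ffrak{M}\).
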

	\begin{proof}
		Let \(j:\mathrm{V}\rightarrow \mathrm{M}\) be \(\lambda \)-srs with \(\cp(j)=\kappa \) so that \(\mathrm{M}=\Ult_E(\mathrm{V})\) for some \((\kappa ,\lambda )\)-extender \(E\).  We can factor by \hlink{mainforcingcl2}
		\begin{align*}
			j(\bbb{P}_{\lambda })=\bbb{P}_{j(\lambda )}^{\ffrak{M}}&\cong(\bbb{P}_\kappa *\dot{\bbb{Q}}_\kappa *\dot{\bbb{R}}_{(\kappa ,\lambda )})^{\ffrak{M}}*\dot{\bbb{R}}_{[\lambda ,j(\kappa ))}^{\ffrak{M}}*\dot{\bbb{R}}_{[j(\kappa ),j(\lambda ))}^{\ffrak{M}}\\
			&\cong(\bbb{P}_\kappa *\dot{\bbb{Q}}_\kappa *\dot{\bbb{R}}_{(\kappa ,\lambda )})^{\ffrak{V}}*\dot{\bbb{R}}_{[\lambda ,j(\kappa ))}^{\ffrak{M}}*\dot{\bbb{R}}_{[j(\kappa ),j(\lambda ))}^{\ffrak{M}}\\
			&\cong \bbb{P}_\lambda ^{\ffrak{V}}*\dot{\bbb{R}}_{[\lambda ,j(\kappa ))}^{\ffrak{M}}*\dot{\bbb{R}}_{[j(\kappa ),j(\lambda ))}^{\ffrak{M}}\text{.}
		\end{align*}
		Without loss of generality, \(\lambda \) isn't \(\lambda +2\)-strong in \(\ffrak{M}\) (otherwise just taking another ultrapower by the Mitchell-least measure on \(\lambda \)) so that in \(\ffrak{M}^{\bbb{P}_\lambda }\), \(\lambda \) is at most measurable with therefore indestructible degrees of strength and its original strength already small: \(\dot{\bbb{Q}}_\lambda ^{\ffrak{M}}=\dot{\bbone}\).  Thus \(\dot{\bbb{R}}_{[\lambda ,j(\lambda ))}^{\ffrak{M}}\) is actually \(\lambda ^+\)-strategi\-cally closed in \(\ffrak{M}\) (and indeed much more).
		
		So let \(G=G_0*G_1\) be \(\bbb{P}_\kappa *\dot{\bbb{R}}_{[\kappa ,\lambda )}\)-generic over \(\mathrm{V}\) such that \(\ffrak{V}[G]\vDash \) \dblq{\(\kappa \) is not \(\lambda \)-strong}.  Our goal is to lift \(j\) to \(j^+:\mathrm{V}[G]\rightarrow \mathrm{M}[G*H]\) in \(\ffrak{V}[G]\) for some \(H=H_0*H_1\in \mathrm{V}[G]\) that is \((\dot{\bbb{R}}_{[\lambda ,j(\kappa ))}^{\ffrak{M}}*\dot{\bbb{R}}_{[j(\kappa ),j(\lambda ))}^{\ffrak{M}})_G\)-generic over \(\mathrm{M}[G]\) such that \(j^+\) remains \(\lambda \)-strong.  This requires examining \(j"G\), and beyond this there are only a couple steps in this lift-up argument: first building \(H_0\) arbitrarily and then generating \(H_1\) from \(j"G_1\).
		
		\begin{claim}\label{scrmainforcingmainlemcl1}
			For \(j(p)\in j"G\), \(j(p)\upharpoonright[\lambda ,j(\kappa ))\)—and in fact \(j(p)\upharpoonright[\kappa ,j(\kappa ))\)—is trivial.  Hence the only non-trivial information \(j"G\) encodes occurs before \(\lambda \) and after \(j(\kappa )\).  
		\end{claim}
		\begin{proof}
			Let \(p\in G\) be arbitrary.  Since we take a direct limit at stage \(\kappa \), there is some \(\alpha <\kappa \) where \(p\upharpoonright[\alpha ,\kappa )\) is just a sequence of \(\dot{\bbone}\)s: \(p\upharpoonright[\alpha ,\kappa )\) is trivial in \(\dot{\bbb{R}}_{[\alpha ,\kappa )}\).  By elementarity, \(j(p)\) is similarly trivial from \(j(\alpha )=\alpha <\kappa \) to \(j(\kappa )\).  In particular, \(j(p)\upharpoonright[\kappa ,j(\kappa ))\) is trivial.\qedhere
		\end{proof}
		
		As a result, \emph{any} \(H_0\) that is \(\bbb{R}_{[\lambda ,j(\kappa ))}^{\ffrak{M}[G]}=(\dot{\bbb{R}}_{[\lambda ,j(\kappa ))}^{\ffrak{M}})_G\)-generic over \(\ffrak{M}[G]\) has \(j"G\upharpoonright j(\kappa )\) contained in \(G*H_0\). So we merely need to find such a generic over \(\ffrak{M}[G]\) in \(\ffrak{V}[G]\) to get \(H_0\).  Then we find \(H_1\).  \hlink{scrmainforcingmainlemcl1} is partly why we need to break up the iteration as we do.  The general idea is that \(j"G\) might have conditions with potentially unbounded support in \(j(\lambda )\).  So we must generate the generic over the end tail.  \hlink{scrmainforcingmainlemcl1} ensures that the middle is left unaffected since \(j"G\) has no real information about it.
		
		As an ultrapower, we can regard \(\mathrm{M}=\{j(f)(r):r\in [\lambda ]^{<\omega }\wedge f:[\kappa ]^{<\omega }\rightarrow \mathrm{V}\}\) so that there are elements of \(r_0,r_1\in [\lambda ]^{<\omega }\) and functions from \([\kappa ]^{<\omega }\) to \(\mathrm{V}\) that represent \(\bbb{P}_\kappa ^{\ffrak{V}}\), \(\dot{\bbb{R}}_{(\kappa ,j(\lambda ))}^{\ffrak{M}}\).  So now we consider the elementary submodel \(\ffrak{N}\preccurlyeq\ffrak{M}\) defined by
		\[\mathrm{N}=\{j(f)(r_0\cup r_1,\kappa ,\lambda ): f\colon[\kappa ]^{<\omega }\rightarrow \mathrm{V}\}\ni \bbb{P}_\kappa ^{\ffrak{V}},\bbb{P}_\lambda ^{\ffrak{V}},\dot{\bbb{Q}}_\kappa ,\dot{\bbb{R}}_{(\lambda ,j(\kappa ))}^{\ffrak{M}},\kappa ,\lambda \text{,}\]
		It's also not hard to see that \(\ffrak{V}\vDash \dblq{{}^{\kappa }\mathrm{N}\subseteq \mathrm{N}}\). We write \(\mathrm{N}[G]\) for \(\{\tau _G:\tau \in \mathrm{N}\}\).
		\begin{claim}\label{scrmainforcingcl5cl1}
			In \(\ffrak{V}[G]\), there is an \(H_0\) \(\bbb{R}_{(\lambda ,j(\kappa ))}^{\ffrak{M}[G]}\)-generic over \(\mathrm{N}[G]\).
		\end{claim}
		\begin{proof}
			\(\bbb{R}_{(\lambda ,j(\lambda ))}^{\ffrak{M}[G]}\) is \(\lambda ^+\)-strategi\-cally closed in \(\ffrak{M}[G]\) and this translates to being merely \(\kappa ^+\)-strategi\-cally closed in \(\ffrak{V}[G]\) by the closure conditions of \(\ffrak{M}\) and \(\ffrak{N}\) in \(\ffrak{V}\): \(\ffrak{N}[G]\preccurlyeq\ffrak{M}[G]\) is closed under \(\kappa \)-sequences.  Hence \(\bbb{R}_{(\lambda ,j(\lambda ))}^{\ffrak{M}[G]}\cap \mathrm{N}[G]\) is still \(\kappa ^+\)-strategi\-cally closed in \(\ffrak{V}[G]\).  Since dense sets in \(\mathrm{N}[G]\) can be identified with \(f:[\kappa ]^{<\omega }\rightarrow \mathrm{V}_\kappa \), a simple counting argument shows that in \(\ffrak{V}[G]\), there are at most \(2^{j(\kappa )}=\kappa ^+\)-many antichains of the poset in \(\mathrm{N}[G]\).  Thus we can find in \(\ffrak{V}[G]\) an \(H\) that is \(\bbb{R}_{(\kappa ,j(\kappa ))}^{\ffrak{M}[G]}\cap \mathrm{N}[G]\)-generic over \(\mathrm{N}[G]\) by standard techniques (extending into dense open sets one by one and leaving the other player in the strategic closure game to clean up our mess at limit stages).\qedhere
		\end{proof}
		Now we must show that \(H_0\) is actually generic over \(M[G]\).  Let \(D\) be a dense open set of \(\bbb{R}_{(\lambda ,j(\kappa ))}^{\ffrak{M}[G]}\).  We can represent \(D\) by \(j(f)(r)\) for some \(f:[\kappa ]^{<\omega }\rightarrow \bb{P}_{\kappa }\) and \(r\in [\lambda ]^{<\omega }\).  So in \(\ffrak{M}[G]\), consider the set \(\{j(f)(s)\text{ dense open}:s\in [\lambda ]^{<\omega }\}\).  Since \(\ffrak{M}[G]\) thinks that the tail \(\bbb{R}_{(\lambda ,j(\kappa ))}^{\ffrak{M}[G]}\) is \(\lambda ^+\)-strategi\-cally closed, the intersection of all of these sets \(E=\bigcap _{s\in [\lambda ]^{<\omega }}j(f)(s)\) is definable just with \(f\) and so in \(\mathrm{N}[G]\), meaning \(E\cap H_0\neq \emptyset \) and showing that it's generic over \(\ffrak{M}[G]\).
		
		\begin{claim}\label{scrmainforcingmainlemcl2}
			Let \(H_1\) be the filter generated by \(j"G_1\).  Therefore \(H_1\) is \(\bbb{R}_{[j(\kappa ),j(\lambda ))}^{\ffrak{M}[G*H_0]}\)-generic over \(\mathrm{M}[G*H_0]\) and as a result of \hlink{scrmainforcingmainlemcl1}, \(j"G\subseteq G*H_0*H_1\).
		\end{claim}
		\begin{proof}
			Let \(D\in \mathrm{M}[G*H_0]\) be open dense with name \(\dot D\).  It follows that we can represent \(D\) as \(j(d)(r)_{G*H_0}\) for some \(d:[\kappa ]^{<\omega }\rightarrow \mathrm{V}\) and \(r\in [\lambda ]^{<\omega }\) where each \(d(s)\) is a name for a dense open set in \(\dot{\bbb{R}}_{[\kappa ,\lambda )}\).  In \(\ffrak{V}[G_0]\), since the tail forcing is \(\le\kappa \)-distributive by \hlink{prepmainforcingtail}, we can intersect all of these dense open sets \(\bigcap _{s\in [\kappa ]^{<\omega }}d(s)_{G_0}\) and get another dense open set that intersects \(G_1\): there is a \(p\in G_1\cap d(s)_{G_0}\) for every \(s\in [\kappa ]^{<\omega }\).  Thus some \(q\in G_0\) has in \(\ffrak{M}\) that \(j(q)\Vdash \) \dblq{\(j(p)\in j(d)(s)\) for every \(s\in [j(\kappa )]^{<\omega }\)} and in particular \(j(q)\Vdash \dblq{j(p)\in \dot D}\).  Since \(q\in G_0\subseteq \bb{P}_\kappa \), \(j(q)\) is just \(q\) with a bunch of \(\dot{\bbone}\)s appended, meaning \(j(q)\in G*H_0\) and so indeed \(j(p)\in j"G_1\cap D\subseteq H_1\cap D\) in \(\ffrak{M}[G*H_0]\).\qedhere
		\end{proof}
		
		Thus \(j"G\subseteq G*H_0*H_1\) and so \(j:\mathrm{V}\rightarrow \mathrm{M}\) lifts to \(j^+:\mathrm{V}[G]\rightarrow \mathrm{M}[G*H]\).  It's not hard to see that \(j^+\) is still \(\lambda \)-strong since \(G\subseteq \mathrm{V}\mid\lambda =\mathrm{M}\mid\lambda \) and \(H\) adds no sets of rank \(<\lambda \).\qedhere
	\end{proof}
	
	More generally what this shows is the following.
	\begin{corollary}\label{liftembedding}
		If \(j:\mathrm{V}\rightarrow \mathrm{M}\) is \(\lambda \)-srs for \(\lambda \) a limit of strongs such that \(\lambda \) isn't stronger than a measurable in \(\ffrak{M}\), then we can lift \(j\) to \(j^+:\mathrm{V}^{\bbb{P}_\lambda }\rightarrow \mathrm{M}^{j(\bbb{P}_\lambda )}\).
	\end{corollary}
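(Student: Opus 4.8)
The plan is to observe that the construction carried out in the proof of \hlink{mainforcingmainlem} never used that \(\kappa =\cp(j)\) is strong: it used only that \(j\) is a \(\lambda\)-srs embedding, that \(\lambda\) is a limit of strongs, and the resulting agreement \(\bbb{P}_\lambda^{\ffrak{V}}=\bbb{P}_\lambda^{\ffrak{M}}\) from \hlink{mainforcingcl2}. So I would re-run that argument verbatim, with the hypothesis that \(\lambda\) isn't stronger than a measurable in \(\ffrak{M}\) supplying directly what \hlink{mainforcingmainlem} obtained ``without loss of generality'' by passing to a further ultrapower.

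First I would fix \(j=j_E:\mathrm{V}\to\mathrm{M}=\Ult_E(\mathrm{V})\) with \(\cp(j)=\kappa\) and factor \(j(\bbb{P}_\lambda)=\bbb{P}_{j(\lambda)}^{\ffrak{M}}\cong\bbb{P}_\lambda^{\ffrak{V}}*\dot{\bbb{R}}_{[\lambda,j(\kappa))}^{\ffrak{M}}*\dot{\bbb{R}}_{[j(\kappa),j(\lambda))}^{\ffrak{M}}\), which is legitimate because \(\lambda\) is a limit of strongs and so \hlink{mainforcingcl2} gives \(\bbb{P}_\lambda^{\ffrak{V}}=\bbb{P}_\lambda^{\ffrak{M}}\). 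Since by hypothesis \(\lambda\) is not stronger than a measurable in \(\ffrak{M}\), we have \(\dot{\bbb{Q}}_\lambda^{\ffrak{M}}=\dot{\bbone}\), and by \hlink{nextnontriv} and \hlink{prepmainforcingtail} the middle tail \(\dot{\bbb{R}}_{[\lambda,j(\kappa))}^{\ffrak{M}}\) is \(\lambda^+\)-strategically closed in \(\ffrak{M}\).

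Next, fixing any \(G=G_0*G_1\) that is \(\bbb{P}_\kappa*\dot{\bbb{R}}_{[\kappa,\lambda)}\)-generic over \(\ffrak{V}\), I would build the generic \(H=H_0*H_1\) over \(\ffrak{M}[G]\) inside \(\ffrak{V}[G]\) in two steps. For \(H_0\): \hlink{scrmainforcingmainlemcl1} (the direct limit at stage \(\kappa\) forces \(j(p)\upharpoonright[\kappa,j(\kappa))\) to be trivial) shows any middle-tail generic over \(\ffrak{M}[G]\) already absorbs the relevant part of \(j"G\), so it suffices to produce one; the counting-plus-strategic-closure argument of \hlink{scrmainforcingcl5cl1} yields an \(H_0\) generic over the hull \(\ffrak{N}[G]\), which is then upgraded to genericity over all of \(\ffrak{M}[G]\) by the intersection trick. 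For \(H_1\): take the filter generated by \(j"G_1\) and verify it is generic for the end tail over \(\ffrak{M}[G*H_0]\) exactly as in \hlink{scrmainforcingmainlemcl2}, using the \(\le\kappa\)-distributivity of \hlink{prepmainforcingtail}.

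Finally, \hlink{scrmainforcingmainlemcl1} and \hlink{scrmainforcingmainlemcl2} give \(j"G\subseteq G*H_0*H_1\), so \(j\) lifts to \(j^+:\mathrm{V}[G]\to\mathrm{M}[G*H]=\mathrm{M}^{j(\bbb{P}_\lambda)}\), as required. The main obstacle is the upgrade of \(H_0\) from \(\ffrak{N}[G]\)-genericity to \(\ffrak{M}[G]\)-genericity, since \(\ffrak{M}[G]\) has too many dense sets to meet one at a time in \(\ffrak{V}[G]\); the point is that the strong closure of the middle tail \emph{inside \(\ffrak{M}\)} lets us replace each \(\ffrak{M}[G]\)-dense set by an intersection over \([\lambda]^{<\omega}\) that is already dense and represented in the small hull \(\ffrak{N}[G]\). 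Everywhere in this argument \(\kappa\) enters only as the measurable, hence inaccessible, critical point of a \(\lambda\)-srs embedding and never through any further strength, which is precisely why the conclusion holds more generally than in \hlink{mainforcingmainlem}.
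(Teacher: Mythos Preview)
Your proposal is correct and matches the paper's approach exactly: the paper presents this corollary immediately after \hlink{mainforcingmainlem} with the single sentence ``More generally what this shows is the following,'' giving no separate proof. You have correctly identified that the argument of \hlink{mainforcingmainlem} never invokes the strongness of \(\kappa\), and that the hypothesis ``\(\lambda\) isn't stronger than a measurable in \(\ffrak{M}\)'' is precisely the content of the ``without loss of generality'' step there (obtained by a further ultrapower by the Mitchell-least measure on \(\lambda\)), so the lift goes through verbatim.
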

	And this gives the desired result.
	
	\begin{result}\label{concludingmaintheorem}
		Assume there is a proper class of srs cardinals and \(\axm{GCH}\) holds.  Therefore \(\bbb{P}\Vdash \axm{UWISS}+\dblq{\text{there is a proper class of strongs}}\)
	\end{result}
	\begin{proof}
		That \(\bbb{P}\Vdash \axm{UWISS}\) follows from \hlink{PforcesUWISS}.  For a proper class of strongs, let \(\kappa \) be srs and let \(\lambda >\kappa \) be strong in \(\ffrak{V}\).  By \hlink{mainforcingmainlem}, in \(\ffrak{V}^{\bbb{P}_\lambda }\), \(\kappa \) is \(\lambda \)-strong and this degree of strength is weakly indestructible, meaning \(\kappa \) is still \(\lambda \)-strong in all later stages (since, again, the tail forcings will be \(\le\kappa \)-distributive and \(<\kappa \)-strategi\-cally closed) and so \(\lambda \)-strong in \(\ffrak{V}^{\bbb{P}}\).  Since there are a proper class of \(\ffrak{V}\)-strongs above \(\kappa \), it follows that \(\kappa \) is strong in \(\ffrak{V}^{\bbb{P}}\).  Hence any \(\kappa \) srs in \(\ffrak{V}\) is strong in \(\ffrak{V}^{\bbb{P}}\), and we have a proper class of both.\qedhere
	\end{proof}
	
	This completes the proof of \hlink{mainforcing}, repeated below, and so in conjunction with \hlink{maincore}, \hlink{maingoal} holds.
	
	\begin{corollary}\label{mainforcing2}
		(1) implies (2) and (3) where these stand for
		\begin{enumerate}
			\item \(\Con(\axm{ZFC}\mathrel{+}\) \dblq{There is a proper class of srs cardinals}\()\).
			\item \(\Con(\axm{ZFC}\mathrel{+}\) \dblq{There is a proper class of strongs} \(\mathrel{+}\axm{UWISS})\).
			\item \(\Con(\axm{ZFC}\mathrel{+}\) \dblq{There is a proper class of strongs} \(+\) \dblq{Every \(\lambda \)-strong \(\kappa \) has weakly indestructible \(\lambda \)-strength whenever \(\lambda \) is below the least measurable limit of measurable cardinals larger than \(\kappa \)}.
		\end{enumerate}
	\end{corollary}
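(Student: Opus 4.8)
The plan is to assemble pieces already established rather than to prove anything new about large cardinals. Starting from a model $\ffrak{V}\vDash\axm{ZFC}$ carrying a proper class of srs cardinals (a witness to the consistency asserted in~(1)), I would first pass to a model additionally satisfying $\axm{GCH}$. As recorded in the footnote to \hlink{mainforcing}, the canonical Easton-support iteration adding a Cohen subset to each successor cardinal accomplishes this while preserving a proper class of srs cardinals; the verification here is the standard one, lifting each srs embedding $j\colon\mathrm{V}\to\mathrm{M}$ through the forcing and checking that the reflecting-strongs clause of \hlink{srsdef} survives, which it does because this forcing neither creates nor destroys strong cardinals. Thus, without loss of generality, $\ffrak{V}\vDash\axm{ZFC}+\axm{GCH}+$ ``there is a proper class of srs cardinals.''

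Next I would force with the preparation $\bbb{P}$ of \hlink{theprepdef} and invoke \hlink{concludingmaintheorem} directly, since it already delivers $\bbb{P}\Vdash\axm{UWISS}+$ ``there is a proper class of strongs.'' The only genuinely new obligation, $\bbb{P}$ being a proper-class iteration, is to see that $\ffrak{V}^{\bbb{P}}\vDash\axm{ZFC}$. For this I would use that each initial segment $\bbb{P}_\gamma$ is set forcing---indeed $\bbb{P}_\gamma\subseteq\mathrm{V}\mid\gamma^{+\P}$, as remarked after \hlink{theprepdef}---together with the Easton support, so that $\bbb{P}$ is a definable, tame class iteration and the forcing theorem, Separation, and Replacement transfer by the standard machinery for such iterations. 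Given that $\axm{ZFC}$ is preserved, \hlink{concludingmaintheorem} then produces a model of $\axm{ZFC}+$ ``proper class of strongs'' $+\,\axm{UWISS}$, which is exactly the relative-consistency statement in~(2).

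For $(1)\Rightarrow(3)$ I would keep the same extension $\ffrak{V}^{\bbb{P}}$ but cite the sharper conclusion of \hlink{PforcesUWISS} in place of bare $\axm{UWISS}$. That result states that in $\ffrak{V}^{\bbb{P}}$ any cardinal $\kappa$'s $\lambda$-strength is weakly indestructible whenever $\lambda<(\kappa^{+\sharp\sharp})^{\ffrak{V}^{\bbb{P}_{\kappa+1}}}$, and that, via \hlink{nextnontriv}, this bound encompasses every $\lambda$ below the least measurable limit of measurables above $\kappa$. This is precisely the indestructibility clause of~(3), so the same model witnesses the consistency statement in~(3).

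The main obstacle is not in the large-cardinal combinatorics, which is entirely discharged by \hlink{concludingmaintheorem} and \hlink{PforcesUWISS}, but in the two bookkeeping verifications flagged above: first, that the $\axm{GCH}$-forcing preserves a proper class of srs cardinals, for which one must confirm the reflecting-strongs agreement survives the lift; and second, that the proper-class forcing $\bbb{P}$ preserves $\axm{ZFC}$, which rests on the definability of the iteration and the Easton-support closure properties making every initial segment set-sized and the whole a tame class forcing. Both are routine given the setup, but they are exactly the steps that convert the internal ``$\bbb{P}\Vdash\cdots$'' results into the external relative-consistency implications asserted by the corollary.
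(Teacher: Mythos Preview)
Your proposal is correct and follows essentially the same approach as the paper, which treats the corollary as an immediate consequence of \hlink{concludingmaintheorem} (for (2)) and the sharper third bullet of \hlink{PforcesUWISS} (for (3)) without a separate proof environment. You are simply more explicit than the paper about the two bookkeeping points—preservation of srs cardinals under the $\axm{GCH}$-forcing and preservation of $\axm{ZFC}$ under the class iteration—both of which the paper either relegates to a footnote or takes as standard.
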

	Again, we can go further than even what (3) states; beyond the next measurable limit of measurable limits of measurables, and so on.  But at some point, saying this just becomes silly.  The point is that we get much more than weak indestructibility for \(\kappa +2\)-strength, up to the next cardinal \(\lambda \) that has at least as many measurables below it as a \(\lambda +2\)-strong cardinal should (since it \emph{was} \(\lambda +2\)-strong in \(\ffrak{V}^{\bbb{P}_{\kappa +1}}\)).
	
	The above results allow us to also prove \hlink{cohenresurrectdegrees}, restated below.
	
	\begin{result}\label{cohenresurrectdegreeslater}
		It's consistent (relative to two srs cardinals and proper class of strong cardinals) that we can force a \(\kappa \) that is not \(\rho \)-strong to be \(\rho \)-strong by a collapse \(\ffrak{Col}(\kappa ^+,\rho )\) 
	\end{result}
	\begin{proof}
		Consider the Easton support iteration \(\bbb{P}_\kappa =\bigast_{\alpha <\kappa }\dot{\bbb{Q}}_\alpha \) where \(\dot{\bbb{Q}}_\kappa \) is defined by the following.
		\begin{enumerate}
			\item At every stage \(\kappa \) that is (forced to be) \(\kappa +2\)-strong, we force with the lottery of \(\kappa ,\rho \)-appro\-priate posets for minimal \(\rho \) (if any exist) of rank \(<\kappa ^{+\P}\).
			\item Then with \(\ffrak{Col}(\kappa ^+,\rho )\).
			\item If there is no such \(\rho \), and \(\kappa \) was originally \(<\lambda \)-strong in \(\ffrak{V}\) where \(\kappa ^{+\sharp\sharp}\le\lambda <\kappa ^{+\P}\), then force with \(\ffrak{Col}(\kappa ^+,\lambda )\).
			\item Otherwise force trivially.
		\end{enumerate}
		If this collapses never resurrect degrees of strength, then the proof below goes through to show that the resulting srs cardinals are strong.  Moreover, weak indestructibility will hold for \emph{all} degrees of strength, simply because the tail will be appropriate (by \hlink{prepmainforcingtail}) and
		\begin{itemize}
			\item Anytime we destroy \(\kappa \)'s \(\rho \)-strength, we make sure the situation in \hyperlink{interactresurrect}{Figure 2} does not happen and \hyperlink{nointeractresurrect}{Figure 3} does: the next stage of forcing occurs far beyond \(\rho \), so the lack of \(\rho \)-strong extenders continues through to the end of the preparation.
			\item If we \emph{don't} destroy \(\kappa \)'s strength with a lottery at stage \(\kappa \), then \(\kappa \)'s strength in \(\ffrak{V}^{\bbb{P}_\kappa }\) indestructible at that stage.  The collapse, if there is one, in (3) doesn't add any degrees of strength by hypothesis.  Thus \(\kappa \)'s strength would be weakly indestructible in \(\ffrak{V}^{\bbb{P}_{\kappa +1}}\) (if there were some \(\kappa ,\rho \)-appro\-priate \(\dot{\bbb{Q}}\) then in \(\ffrak{V}^{\bbb{P}_\kappa }\), we could force with the \(\kappa ,\rho \)-appro\-priate \(\dot{\bbb{Q}}_\kappa *\dot{\bbb{Q}}\)).  The collapse will again ensure the next stage of forcing occurs far beyond \(\lambda \) as in \hyperlink{nointeractresurrect}{Figure 3}, so the lack of \(\lambda \)-strong extenders continues through to the end of the preparation.
		\end{itemize} 
		The same proof as \hlink{liftembedding} with the previous preparation in \hlink{theprepdef} tells us that the srs cardinals become strong in \(\ffrak{V}^{\bbb{P}}\).  But two strong cardinals and weak indestructibility for all degrees of strength contradicts \hlink{hamkinsoopsy}.\qedhere
	\end{proof}
	
	\section{Small Side Results} \label{Small Side Results}
	
	It's not hard to see that the forcing preparation \(\bbb{P}\) from \hlink{theprepdef} can be generalized and restricted to the following theorem when combined with the original proof from \cite{aptersargsyan}.  The idea really is that \cite{aptersargsyan} only gives \(\axm{UWISS}\) and the only reason there is a weakly indestructible strong cardinal in the end is that all of its degrees of strength are small: there is no measurable cardinal above the strong.
	
	\begin{theorem}\label{srshypergenforcing}
		Let \(\alpha \in \mathrm{Ord}\).  Therefore the following are equiconsistent.
		\begin{enumerate}
			\item \(\axm{ZFC}+\dblq{\text{there are (at least) }\alpha \text{ srs cardinals with a hyperstrong above them}}\);
			\item \(\axm{ZFC}+\axm{UWISS}+\dblq{\text{there are (at least) }\alpha +1\text{ strong cardinals}}\).  In fact, the last of those \(\alpha +1\)-strong cardinals is weakly indestructible for all of its degrees of strength.
		\end{enumerate}
	\end{theorem}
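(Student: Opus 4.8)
The plan is to establish each direction by localizing the global arguments already proven—\hlink{maincore} for the core-model half and \hlink{mainforcing} for the forcing half—to the $\alpha$ srs cardinals, and to dispose of the single extra top cardinal by grafting on the original Apter--Sargsyan construction of \cite{aptersargsyan}. Throughout I write $\langle \kappa _i:i<\alpha \rangle $ for the srs cardinals and $\theta $ for the hyperstrong above them. I would first record two facts used repeatedly. A hyperstrong cardinal is a limit of strongs: applying $1$-hyperstrength gives $j:\ffrak{V}\rightarrow \ffrak{M}$ with $\cp(j)=\theta $ and $\ffrak{M}\vDash \dblq{\theta \text{ is strong}}$, so for each $\beta <\theta $ the witness $\gamma =\theta $ shows $\ffrak{M}\vDash \dblq{\exists \gamma \ (\beta <\gamma <j(\theta )\wedge \gamma \text{ strong})}$, and elementarity reflects a strong into $(\beta ,\theta )$. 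Second, a \emph{vouching} fact: if $\theta $ is strong and $\kappa <\theta $ is $<\theta $-strong, then $\kappa $ is strong, since given $\mu >\theta $ we take $k$ witnessing the $(\mu +\omega )$-strength of $\theta $, push the $<\theta $-strong extenders on $\kappa =k(\kappa )$ through $k$ to obtain in the target a $\mu $-strong extender on $\kappa $ of rank $<\mu +\omega $, and the $\ffrak{V}\mid (\mu +\omega )$-agreement places this extender back in $\ffrak{V}$, where it still witnesses $\mu $-strength because only $\ffrak{V}\mid \mu $-agreement and $\mathcal{P}(\kappa )$-agreement are needed.

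For (1)$\Rightarrow $(2) I would run a two-block preparation $\bbb{P}=\bbb{P}_\theta *\dot{\bbb{S}}$. The first block $\bbb{P}_\theta $ is the preparation of \hlink{theprepdef} cut off at $\theta $; since $\theta $ is a limit of strongs there are strong cardinals cofinal below $\theta $, so the argument of \hlink{concludingmaintheorem} via \hlink{mainforcingmainlem} applies to each $\kappa _i$ against every strong $\lambda $ with $\kappa _i<\lambda <\theta $, making each $\kappa _i$ at least $<\theta $-strong in $\ffrak{V}^{\bbb{P}_\theta }$, while \hlink{PforcesUWISS} yields $\axm{UWISS}$ below $\theta $. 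The second block $\dot{\bbb{S}}$ is the Apter--Sargsyan preparation of \cite{aptersargsyan} applied to the hyperstrong $\theta $: it makes $\theta $ strong with \emph{all} of its degrees of strength weakly indestructible, the point being that the construction arranges that no cardinal above $\theta $ is $\mu +2$-strong, i.e.\ $\theta ^{+\sharp\sharp}$ is undefined, so the obstruction of \hlink{hamkinsoopsy} never fires. Because $\dot{\bbb{S}}$ is $<\theta $-strategically closed (indeed far more), it adds no subsets of rank $<\theta $ and preserves the $<\theta $-strength of each $\kappa _i$; the vouching fact together with the strength of $\theta $ in the final model then upgrades each $\kappa _i$ to a full strong. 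As $\axm{UWISS}$ is vacuous above $\theta $, holds below $\theta $, and holds at $\theta $, we obtain $\axm{UWISS}$ with $\alpha +1$ strong cardinals, the last of which—namely $\theta $—is weakly indestructible for all its degrees.

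For (2)$\Rightarrow $(1) I would dualize \hlink{maincore}. Assuming no inner model with a Woodin cardinal (otherwise \hlink{woodinpcsrs} already furnishes a proper class of srs cardinals), pass to $\ffrak{K}$. The proof of \hlink{maincore} shows verbatim that each of the $\alpha $ strong cardinals below $\theta $ is srs in $\ffrak{K}$. For the top cardinal I would invoke the core-model half of the equiconsistency \hlink{aptsargmain} from \cite{aptersargsyan}: the hypothesis that $\theta $ is weakly indestructible for \emph{all} degrees of strength is exactly what that argument converts into hyperstrength of $\theta $ in $\ffrak{K}$, since universal indestructibility forbids a $\mu +2$-strong cardinal above $\theta $ in $\ffrak{K}$, which by \hlink{srsishyper} and \hlink{srsishypercl} is precisely the ingredient that lifts ``srs'' to ``fully hyperstrong''. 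Thus $\ffrak{K}$ exhibits $\alpha $ srs cardinals with a hyperstrong $\theta $ above them, giving (1).

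The main obstacle is the seam between the two blocks in (1)$\Rightarrow $(2): verifying that grafting the Apter--Sargsyan block $\dot{\bbb{S}}$ on top of $\bbb{P}_\theta $ neither resurrects nor destroys the $<\theta $-strength of the $\kappa _i$ while still delivering full indestructibility at $\theta $, together with the vouching step, which is what lets a single top strong certify strength for every $\kappa _i$ without appealing to srs embeddings above $\theta $—where none exist, as $\theta $ is the last strong. In the reverse direction the delicate point is confirming that it is genuinely ``universal'' weak indestructibility, not merely the small-degree $\axm{UWISS}$, that supplies the surplus pushing $\theta $ from srs up to hyperstrong in $\ffrak{K}$, and that this Apter--Sargsyan extraction of a hyperstrong survives the presence of the additional srs cardinals sitting below it.
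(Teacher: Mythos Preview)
Your overall strategy matches the paper's in both directions, but there are two points where your execution diverges or stumbles.

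For (1)$\Rightarrow$(2), the paper does \emph{not} run a two-block preparation. It runs the single preparation of \hlink{theprepdef} together with the Apter--Sargsyan stopping rule: cut off the universe at $\kappa^{+\sharp}$ as soon as some $\kappa$ has weakly indestructible $\kappa^{+\sharp}$-strength in $\ffrak{V}^{\bbb{P}_\kappa}$, noting this happens at or before the hyperstrong $\theta$; the srs cardinals below the cutoff become strong by \hlink{mainforcingmainlem}. Your two-block version is morally the same idea, but it creates an obligation you do not discharge: to apply \cite{aptersargsyan} \emph{in} $\ffrak{V}^{\bbb{P}_\theta}$ you need $\theta$ to remain hyperstrong there, and $\bbb{P}_\theta$ has size $\theta$, not $<\theta$, so this is not a small-forcing triviality. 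The single-preparation framing sidesteps this because the termination argument appeals to the hyperstrength of $\theta$ in the ground model $\ffrak{V}$, not in an intermediate extension.

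For (2)$\Rightarrow$(1), your justification that $\theta$ is hyperstrong in $\ffrak{K}$ is garbled. \hlink{srsishyper} and \hlink{srsishypercl} say that an srs cardinal \emph{with a strong above it} is hyperstrong; the top cardinal $\theta$ has no strong above it, so those results point the wrong way. Nor does the absence of a $\mu+2$-strong above $\theta$ in $\ffrak{V}$ (which you correctly extract from \hlink{hamkinsoopsy}) say anything directly about $\ffrak{K}$. The paper simply cites the core-model direction of \cite{aptersargsyan}: under $\axm{UWISS}$, any $\ffrak{V}$-strong is hyperstrong in $\ffrak{K}$, and if there is a $\ffrak{V}$-strong above it, it is moreover srs in $\ffrak{K}$ by the argument of \hlink{maincore}. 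That is all that is needed---the lower $\alpha$ strongs become srs in $\ffrak{K}$, and the top one becomes hyperstrong.
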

	\begin{proof}
		That (2) is relatively consistent relative to (1) follows from the techniques of \cite{aptersargsyan}: forcing \(\axm{GCH}\), and cutting off the universe at \(\kappa ^{+\sharp}\) whenever \(\kappa \) has weakly indestructible \(\kappa ^{+\sharp}\)-strength in \(\ffrak{V}^{\bbb{P}_\kappa }\).  This always ends at or before the final hyperstrong.  Such a \(\kappa \) is strong in the final model.  The lower srs cardinals become strong after forcing with \(\bbb{P}_\kappa \) as noted with \hlink{mainforcingmainlem}, and so there are \(\alpha +1\)-strong cardinals after forcing with \(\bbb{P}_\kappa \) and clearly \(\bbb{P}_\kappa \cong\bbb{P}_\mu \) forces \(\axm{UWISS}\) over \(\ffrak{V}\mid\mu \).
		
		That (1) is relatively consistent to (2) follows again from the techniques of \cite{aptersargsyan}: if \(\ffrak{V}\vDash \axm{ZFC}+\axm{UWISS}\), any \(\ffrak{V}\)-strong cardinal \(\kappa \) will be hyperstrong in \(\ffrak{K}\) and if there's a \(\ffrak{V}\)-strong above \(\kappa \), then \(\kappa \) will be srs in \(\ffrak{K}\).\qedhere
	\end{proof}
	
	Reflecting strongs is just one kind of reflection, but in principle, we could also enforce that we reflect more.  This essentially converges onto the idea of a Woodin cardinal.  The following results are proven in my upcoming thesis \cite{thesis}.
	
	\begin{theorem}\label{goal2}
		Let \(\bbb{P}\) be as in \hlink{theprepdef}. Let \(\delta \) be a Woodin cardinal.  Therefore \(\bbb{P}_\delta \Vdash \dblq{\delta \text{ is Woodin}}\).  Hence the existence of a Woodin cardinal \(\delta \) gives the consistency of a Woodin cardinal \(\delta \) where \(\ffrak{V}\mid\delta \vDash \axm{UWISS}\).
	\end{theorem}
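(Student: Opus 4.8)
The plan is to verify, in \(\ffrak{V}^{\bbb{P}_\delta }\), the standard extender-reflection characterization of Woodinness: for every \(A\subseteq \mathrm{V}\mid\delta \) in the extension there is a single \(\kappa <\delta \) that is \(A\)-strong up to \(\delta \), i.e.\ for each \(\lambda <\delta \) there is an extender \(E\) with \(\cp(E)=\kappa \), \(j_E(\kappa )>\lambda \), \(\mathrm{V}\mid\lambda \subseteq \Ult_E(\mathrm{V})\), and \(j_E(A)\cap \mathrm{V}\mid\lambda =A\cap \mathrm{V}\mid\lambda \). First I would note that \(\delta \) is Mahlo, so the Easton iteration \(\bbb{P}_\delta \subseteq \mathrm{V}\mid\delta \) is \(\delta \)-cc; hence any \(A\) as above has a nice name coded by some \(B_0\subseteq \mathrm{V}\mid\delta \) in \(\ffrak{V}\). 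I would then fold into a single set \(B\subseteq \mathrm{V}\mid\delta \) both \(B_0\) and the data defining the preparation below \(\delta \) from \hlink{theprepdef} (which stages are non-trivial and the posets chosen there), so that agreement on \(B\) forces agreement on the iteration. Interpreting the parameter \(\delta ^{+\P}\) of \hlink{theprepdef} inside \(\ffrak{V}\mid\delta \) is legitimate because \(\ffrak{V}\mid\delta \) has a proper class of srs cardinals by \hlink{woodinpcsrs}.

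Next, I would apply Woodinness of \(\delta \) in \(\ffrak{V}\) to \(B\) to obtain a single \(\kappa <\delta \) that is \(B\)-strong up to \(\delta \). For each \(\lambda <\delta \) — taken to be a limit of strongs of \(\ffrak{V}\mid\delta \) that is not stronger than a measurable in the target (arranged as in \hlink{mainforcingmainlem}), these being cofinal in \(\delta \) by \hlink{woodinpcsrs} — Woodinness yields \(j:\mathrm{V}\rightarrow \mathrm{M}\) with \(\cp(j)=\kappa \), \(\mathrm{V}\mid\lambda \subseteq \mathrm{M}\), and \(j(B)\cap \mathrm{V}\mid\lambda =B\cap \mathrm{V}\mid\lambda \). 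Because \(B\) codes the preparation, this agreement plays exactly the role of the agreement-on-strongs in \hlink{mainforcingcl2} and gives \(\bbb{P}_\lambda ^{\ffrak{V}}=j(\bbb{P})_\lambda =\bbb{P}_\lambda ^{\ffrak{M}}\). I would then lift \(j\) through \(\bbb{P}_\lambda \) verbatim as in \hlink{mainforcingmainlem} and \hlink{liftembedding}: factor \(j(\bbb{P}_\lambda )\), build a generic \(H_0\) for the middle tail \(\dot{\bbb{R}}_{[\lambda ,j(\kappa ))}^{\ffrak{M}}\) inside a hull \(\ffrak{N}\preccurlyeq\ffrak{M}\) closed under \(\kappa \)-sequences using strategic closure computed in \(\ffrak{M}\), confirm it is generic over \(\ffrak{M}\), and generate the end-tail generic \(H_1\) from \(j"G\). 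This produces \(j^+:\mathrm{V}[G_\lambda ]\rightarrow \mathrm{M}[G*H]\), still \(\lambda \)-strong, with \(j^+(A)\cap \mathrm{V}\mid\lambda =A\cap \mathrm{V}\mid\lambda \).

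To close, I would observe that the \((\kappa ,\lambda )\)-extender \(E^+\) derived from \(j^+\) lies in \(\mathrm{V}[G_\lambda ]\subseteq \mathrm{V}[G_\delta ]\), and since the remaining tail \(\dot{\bbb{R}}_{[\lambda ,\delta )}\) is \(\le\lambda \)-distributive by \hlink{prepmainforcingtail} it adds no subsets of \(\mathrm{V}\mid\lambda \); hence \(E^+\) remains a genuine extender in \(\ffrak{V}^{\bbb{P}_\delta }\) witnessing \(A\)-strength of \(\kappa \) up to \(\lambda \). As \(\lambda \) ranges cofinally in \(\delta \) and \(\kappa \) was chosen once for all of them, \(\kappa \) is \(A\)-strong up to \(\delta \) in \(\ffrak{V}^{\bbb{P}_\delta }\), so \(\delta \) is Woodin there. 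The final sentence then follows since \(\ffrak{V}^{\bbb{P}_\delta }\mid\delta =(\ffrak{V}\mid\delta )^{\bbb{P}_\delta }\), and \(\ffrak{V}\mid\delta \) has a proper class of srs cardinals by \hlink{woodinpcsrs}, so \hlink{concludingmaintheorem} (equivalently \hlink{PforcesUWISS}) applied inside \(\ffrak{V}\mid\delta \) gives \(\ffrak{V}^{\bbb{P}_\delta }\mid\delta \vDash \axm{UWISS}\).

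The main obstacle is the agreement step: the Woodin characterization supplies only \(A\)-strong embeddings rather than the reflecting-strongs embeddings that drove \hlink{mainforcingcl2} and \hlink{mainforcingmainlem}. One must therefore be careful to pack all of the preparation's defining data into \(B\) so that \(B\)-reflection genuinely yields \(\bbb{P}_\lambda ^{\ffrak{V}}=\bbb{P}_\lambda ^{\ffrak{M}}\), and to check that the lift-up argument — whose closure and strategic-closure inputs were stated for srs embeddings — transfers unchanged once that equality is in hand.
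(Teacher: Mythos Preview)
The paper does not actually prove this theorem: immediately before stating it, the author writes that ``the following results are proven in my upcoming thesis~\cite{thesis}'', and no proof is given in the paper itself. So there is no in-paper argument to compare against.

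That said, your proposal is exactly the approach the paper's machinery invites, and it is almost certainly what the deferred proof does. The paper's entire forcing section is designed so that once one has \(\bbb{P}_\lambda ^{\ffrak{V}}=\bbb{P}_\lambda ^{\ffrak{M}}\), the lift-up of \hlink{mainforcingmainlem}/\hlink{liftembedding} goes through verbatim; the only question is how to secure that equality. For srs cardinals this came from reflecting strongs (\hlink{mainforcingcl2}); below a Woodin, as you observe, one instead packs the preparation's defining data (and a nice name for \(A\)) into a single \(B\subseteq \mathrm{V}\mid\delta \) and invokes \(B\)-reflection from Woodinness. Your care in choosing \(\lambda \) to be a limit of strongs of \(\ffrak{V}\mid\delta \), in noting the \(\delta \)-cc of \(\bbb{P}_\delta \), and in observing that the tail \(\dot{\bbb{R}}_{[\lambda ,\delta )}\) is \(\le\lambda \)-distributive so the derived extender persists to \(\ffrak{V}^{\bbb{P}_\delta }\), are all the right moves. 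The closing paragraph correctly flags the one genuine point of work: making \(B\) rich enough that \(B\)-agreement forces stage-by-stage agreement of the iteration (in particular the values of \(\delta ^{+\P}\), the minimal destructible degree \(\rho \), and the lottery components), which is routine but must be checked.
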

	
	This gives the consistency of \(\axm{UWISS}\) with a proper class of a large variety of large cardinal notions relative to the existence of a Woodin cardinal.
	
	While on the topic of Woodins, consider the interaction of universal weak indestructibility for \emph{large} degrees of strength with Woodin cardinals.  For example, we used a cardinal that is strong and reflects strongs to get some weak indestructibility results for strength. We cannot have the same indestructibility for reflection properties.
	
	\begin{definition}\label{woodinforweakindestructibility}
		An inaccessible cardinal \(\delta \) is \emph{Woodin by weak indestructibility} iff for every \(A\subseteq \mathrm{V}\mid\delta \), there is a \(\kappa <\delta \) that is \(<\delta \)-strong reflecting \(A\) such that this strength and reflection is weakly indestructible by \(<\kappa \)-strategi\-cally closed posets.
	\end{definition}
	\begin{result}\label{nowoodinforwi}
		It is not possible to have a cardinal that is Woodin for weak indestructibility.
	\end{result}
	\begin{proof}
		Suppose not: let \(\delta \) be Woodin for weak indestructibility.  Let \(W\) be the set of all cardinals with weakly indestructible \(<\delta \)-strength in \(\ffrak{V}\).  There is therefore an (arbitrarily large) \(\kappa <\delta \) that is \(<\delta \)-strong reflecting \(W\) such that this is weakly indestructible.  In particular, after adding a Cohen subset \(A\subseteq \ffrak{Add}(\kappa ^+,1)\), \(\kappa \) is still \(<\delta \)-strong reflecting \(W\). Let \(j:\mathrm{V}[A]\rightarrow \mathrm{M}[j(A)]\), \(\cp(j)=\kappa \) witness this for some large \(\lambda <\delta \).  In \(\ffrak{M}[j(A)]\), \(j(W)\cap \lambda =W\cap \lambda \ni\kappa \).  Consider the least element \(\mu \in W\) above \(\kappa \) that is \(<\delta \)-strong, and assume without loss of generality that \(\lambda \) is large enough that \(\mu <\lambda \).  By \cite{hamkinssuper} in \(\ffrak{V}[A]\), since \(\ffrak{Add}(\kappa ^+,1)\) is small relative to \(\mu \), \(\mu \)'s strength is weakly destructible by \(\ffrak{Add}(\mu ^+,1)\).  Reflected in \(\ffrak{V}[A]\), this means arbitrarily large cardinals \(\hat\mu \in W\) have their \(<\delta \)-strength destroyed by \(\ffrak{Add}(\hat\mu ^+,1)\).  Hence \(\ffrak{Add}(\kappa ^+,1)*\ffrak{Add}(\hat\mu ^+,1)\) destroys \(\hat\mu \)'s \(<\delta \)-strength in \(\ffrak{V}\), contradicting that \(\hat\mu \in W\).\qedhere
	\end{proof}
	
	Despite this result, we can still have a Woodin cardinal such that the cardinals witnessing this can always be chosen to have weakly indestructible strength.
	
	\begin{definition}\label{Woodinwitnesswi}
		A cardinal \(\delta \) is \emph{Woodin witnessed by (weak) indestructibility} iff \(\delta \) is Woodin, and every \(<\delta \)-strong cardinal \(\kappa <\delta \) has (weakly) indestructible strength.
	\end{definition}
	Using strongs reflecting strongs again, it's possible to force any Woodin cardinal \(\delta \) to be witnessed by weak indestructibility. Stated differently, we get an equiconsistency.
	
	\begin{corollary}\label{woodinwitnessequi}
		The following are equiconsistent with \(\axm{ZFC}\).
		\begin{enumerate}
			\item There is a Woodin cardinal.
			\item There is a Woodin witnessed by weak indestructibility.
			\item There is a Woodin cardinal \(\delta \) such that \(\ffrak{V}\mid\delta \vDash \axm{UWISS}\).
		\end{enumerate}
	\end{corollary}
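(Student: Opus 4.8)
The three statements nest in the obvious way: a Woodin witnessed by weak indestructibility is in particular a Woodin (by \hlink{Woodinwitnesswi}), and (3) asserts a Woodin outright, so both (2) and (3) trivially imply (1). The only real content is therefore to start from a single Woodin cardinal \(\delta \) and force, \emph{separately}, a model of (2) and a model of (3). I would insist on two distinct preparations because the two cannot be combined below a fixed Woodin: any \(<\delta \)-strong \(\kappa <\delta \) has \(\kappa ^{+\sharp\sharp}<\delta \) (there are \(\mu +2\)-strong \(\mu \) cofinally below \(\delta \)), so the tension that \hlink{hamkinsoopsy} exhibits between small- and large-degree indestructibility would turn weak indestructibility of \(\kappa \)'s large degrees of strength into a failure of \(\axm{UWISS}\). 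Thus the plan is to establish \(\Con(1)\rightarrow \Con(3)\) and \(\Con(1)\rightarrow \Con(2)\) by two different truncated iterations.

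The implication \(\Con(1)\rightarrow \Con(3)\) is exactly \hlink{goal2}: forcing with the preparation \(\bbb{P}_\delta \) of \hlink{theprepdef} keeps \(\delta \) Woodin and arranges \(\ffrak{V}\mid\delta \vDash \axm{UWISS}\), which is (3). For \(\Con(1)\rightarrow \Con(2)\)—forcing \(\delta \) to be Woodin witnessed by weak indestructibility—I would instead run the \emph{collapsing} variant of the preparation from \hlink{cohenresurrectdegreeslater}, again cut off at \(\delta \) (i.e.\ force with its length-\(\delta \) version). The purpose of inserting the collapses \(\ffrak{Col}(\kappa ^+,\rho )\) (and the analogous collapse when nothing is destroyed) is precisely that, after \(\kappa \)'s strength is reduced to \(\rho \), the next nontrivial stage of forcing is pushed far above \(\rho \); the lack of \(\rho \)-strong extenders then persists to the end of the preparation (the \hyperlink{nointeractresurrect}{Figure 3} behaviour, never \hyperlink{interactresurrect}{Figure 2}), so no destroyed degree of strength is resurrected. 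Consequently whatever \(<\delta \)-strength a cardinal retains in the extension is weakly indestructible for \emph{all} of its surviving degrees, since any witness to destructibility could have been absorbed into the lottery already, together with the appropriate tail, which is \(\delta \)-appropriate by \hlink{prepmainforcingtail}.

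It remains to verify the two features defining a Woodin witnessed by weak indestructibility. First, the srs cardinals of \(\ffrak{V}\) below \(\delta \) become \(<\delta \)-strong in the extension: because \(\delta \) is a limit of strongs, the lift-up argument of \hlink{mainforcingmainlem} and \hlink{liftembedding} applies at each limit of strongs \(\lambda <\delta \), so each such \(\kappa \) is \(\lambda \)-strong after forcing the stage-\(\lambda \) preparation and stays so thereafter by distributivity of the tail. Second, \(\delta \) must remain Woodin after the collapsing preparation, which is the same style of reflection argument as \hlink{goal2}, using that the iteration below \(\delta \) is captured inside \(\ffrak{V}\mid\delta \). The main obstacle is exactly this last point together with the verification of \emph{full} weak indestructibility: one must confirm that the more aggressive collapsing neither destroys the Woodinness of \(\delta \) nor leaves some \(<\delta \)-strong \(\kappa \) (not merely an srs-derived one) with destructible strength. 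Here the crucial contrast with \hlink{cohenresurrectdegreeslater} is that truncating at \(\delta \) leaves no genuinely strong cardinal in the model, so the collision with \hlink{hamkinsoopsy} that forced a contradiction in the proper-class setting is harmlessly absent, and \(\delta \) may consistently be Woodin witnessed by weak indestructibility.
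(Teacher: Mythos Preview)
Your handling of the trivial implications and of \hlink{goal2} for $(1)\Rightarrow(3)$ matches the paper, as does your recognition that (2) and (3) must be realized in separate models. The paper supplies no detailed proof of this corollary, but the approach it signals (in the discussion preceding \hlink{woodingoal}) for $(1)\Rightarrow(2)$ is \emph{not} the collapsing preparation of \hlink{cohenresurrectdegreeslater}: it is ``a separate preparation where we ignore small degrees of strength,'' i.e.\ an iteration that at stage $\kappa$ acts only when $\kappa$'s $<\delta$-strength is destructible, and then destroys that. Small degrees are simply left alone, so the resurrection problem you are trying to neutralize with collapses never arises for the quantity one actually cares about.

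Your proposal to use the collapsing variant instead has a real gap, and indeed an internal inconsistency. You assert that in this preparation ``no destroyed degree of strength is resurrected,'' but \hlink{cohenresurrectdegreeslater} is proved \emph{by contradiction} from exactly that assumption: if the collapses never resurrect, the preparation yields weak indestructibility for \emph{all} degrees together with multiple strongs, contradicting \hlink{hamkinsoopsy}. You try to escape by noting that truncation at $\delta$ leaves no fully strong cardinal, but two sentences earlier you yourself (correctly) invoked \hlink{hamkinsoopsy} below $\delta$ with $<\delta$-strength in place of full strength---its proof only needs an embedding strong enough to see $\kappa^{+\sharp\sharp}<\delta$. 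So if your collapsing preparation below $\delta$ really had no resurrection, it would produce both $\axm{UWISS}$ below $\delta$ \emph{and} weakly indestructible $<\delta$-strength simultaneously, which you have already ruled out. Hence some collapse must resurrect, and once that happens you have given no argument that every $<\delta$-strong $\kappa$ in the extension has weakly indestructible $<\delta$-strength. The paper's ``ignore small degrees'' iteration sidesteps this entirely by never attempting small-degree indestructibility in the first place.
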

	
	This suggests there is a delicate balance between (weak) indestructibility and reflection: one can have weakly indestructible strength with relative ease, but too much of this precludes too much weakly indestructible reflection properties by \hlink{nowoodinforwi}.  What kinds of non-trivial reflection can be indestructible is a subject of further research.  For example, is it possible to have a proper class of strongs, and a strong reflecting strongs with this strength and reflection weakly indestructible?  Below a Woodin witnessed by weak indestructibility, \(\delta \), the answer is no, because in \(\ffrak{V}\mid\delta \), the weakly indestructible strong cardinals are just all of the strong cardinals. But in a more general setting the answer is not as obvious to me.

	Some of the open problems stated above are collected here for convenience.
	\begin{questions}\label{endproblems}
		All of the questions below can also be rephrased in terms of supercompactness.
		\begin{enumerate}
			\item Is it possible to have every \(\kappa \)'s \(<\kappa ^{+\sharp\sharp}\)-strength weakly indestructible in the presence of multiple strong cardinals?
			\item To what extent can the reflection properties in the embeddings of a measurable cardinal be (weakly) indestructible?
			\item Is it possible to have a strong reflecting strongs cardinal (with a strong above it) such that this strength and reflection of (ground model) strongs is weakly indestructible?
			\item To what extent can we control the resurrection of degrees after destroying degrees of strength in a preparation like \hlink{theprepdef}?
			\item If a poset is \(\le\kappa \)-strategi\-cally closed, is it \(<\kappa ^+\)-strategi\-cally closed?
		\end{enumerate}
	\end{questions}

	\newpage

		\begin{bibdiv}
			\begin{biblist}
				\bib{aptersargsyan}{article}{
					title={An Equiconsistency for Universal Indestructibility},
					author={Apter, Arthur W.},
					author={Sargsyan, Grigor},
					date={2010},
					journal={The Journal of Symbolic Logic},
					volume={75},
					number={1},
					pages={314\textendash 322}
				}
				\bib{apterhamkins}{article}{
					title={Universal indestructibility},
					author={Apter, Arthur W.},
					author={Hamkins, Joel David},
					date={1999},
					journal={Kobe Journal of Mathematics},
					volume={16},
					number={2},
					pages={119\textendash 130}
				}
				\bib{apterapplications}{article}{
					title={Some applications of Sargsyan's equiconsistency method},
					author={Apter, Arthur W.},
					date={2012},
					journal={Fundamenta Mathematicae},
					volume={216},
					number={3},
					pages={207\textendash 222}
				}
				\bib{cummings}{article}{
					title={Iterated Forcing and Elementary Embeddings},
					author={Cummings, James},
					book={
						title={Handbook of Set Theory},
						editor={Foreman, Matthew},
						editor={Kanamori, Akihiro},
						publisher={Springer, Dordrecht},
						date={2010},
						volume={2}
					},
					pages={775\textendash 883}
				}
				\bib{GapForcing}{article}{
					title={Gap Forcing},
					author={Hamkins, Joel David},
					date={2001},
					journal={Israel Journal of Mathematics},
					volume={125},
					pages={237–252}
				}
				\bib{hamkinssuper}{article}{
					title={Small Forcing Makes Any Cardinal Superdestructible},
					author={Hamkins, Joel David},
					date={1998},
					journal={The Journal of Symbolic Logic},
					volume={63},
					number={1},
					pages={51–58},
					publisher={Association for Symbolic Logic}
				}
				\bib{thesis}{thesis}{
					title={Weak Indestructibility and Reflection},
					author={Holland, James},
					date={2022},
					type={Ph.D. Thesis},
					note={Unpublished}
				}
				\bib{jech}{book}{
					author={Jech, Thomas},
					title={Set Theory},
					subtitle={The Third Millennium Edition, revised and expanded},
					date={2003},
					series={Springer Monographs in Mathematics},
					publisher={Springer-Verlag Berlin Heidelberg}
				}
				\bib{Kanamori}{book}{
					author={Kanamori, Akihiro},
					title={The Higher Infinite},
					date={2009},
					edition={2},
					series={Springer Monographs in Mathematics},
					publisher={Springer-Verlag Berlin Heidelberg}
				}
				\bib{laverprep}{article}{
					title={Making the supercompactness of \(\kappa \) indestructible under \(\kappa \)-directed closed forcing},
					author={Laver, Richard},
					date={1978},
					journal={Israel Journal of Mathematics},
					volume={29},
					number={4},
					pages={385–388}
				}
				\bib{Steelfine}{book}{
					title={Fine Structure and Iteration Trees},
					author={Mitchell, William J.},
					author={Steel, John R.},
					date={2016},
					series={Lecture Notes in Logic},
					volume={3},
					publisher={Cambridge University Press},
				}
				\bib{Perlmutter}{article}{
					title={The large cardinals between supercompact and almost-huge},
					author={Perlmutter, Norman Lewis},
					date={2015},
					journal={Archive for Mathematical Logic},
					volume={54},
					pages={257–289}
				}
				\bib{Schindler}{article}{
					title={Iterates of the Core Model},
					author={Schindler, Ralf},
					date={2006},
					journal={The Journal of Symbolic Logic},
					volume={71},
					number={1},
					pages={241–251}
				}
				\bib{Steelcore}{book}{
					title={The Core Model Iterability Problem},
					author={Steel, John R.},
					date={2016},
					series={Lecture Notes in Logic},
					volume={8},
					publisher={Cambridge University Press},
				}
			\end{biblist}
		\end{bibdiv}
	
\end{document}